\documentclass[11pt,reqno,a4paper]{amsart}
\usepackage{amsmath,amsthm,verbatim,amscd,amssymb,setspace,hyperref,esint,mathtools}
\usepackage{exscale,color}

\usepackage{mathrsfs}

\usepackage{enumitem}

\usepackage[colorinlistoftodos,prependcaption]{todonotes}

\renewcommand{\epsilon}{\varepsilon}
\newcommand{\N}{\mathbb{N}}

\newcommand{\R}{\mathbb{R}}
\newcommand{\C}{\mathbb{C}}

\renewcommand{\Re}{\operatorname{Re}}

\newcounter{mtheorem}
\newtheorem{mtheorem}[mtheorem]{Theorem}

\newtheorem{mcor}[mtheorem]{Corollary}

\newcommand{{\vol}}{\rm vol}
\newcommand{\p}{\partial}

\newcommand{\Ric}{\operatorname{Ric}}

\newcommand{\Rm}{\operatorname{Rm}}

\def\tr{\operatorname{tr}}

\def\Id{\operatorname{Id}}

\def \bp {\bar{\partial}}

\def\tr{\operatorname{tr}}
\def\Ric{\operatorname{Ric}}
\def\tr{\operatorname{tr}}

\def\Id{\operatorname{Id}}

\def\vol{\operatorname{vol}}

\newtheoremstyle{fancy}{}{}{\itshape}{}{\textbf\bgroup}{.\egroup}{ }{}
\newtheoremstyle{fancy2}{}{}{\rm}{}{\textbf\bgroup}{.\egroup}{ }{}

\theoremstyle{fancy}
\newtheorem{theorem}{Theorem}[section]
\newtheorem{lemma}[theorem]{Lemma}
\newtheorem{corollary}[theorem]{Corollary}

\newtheorem{prop}[theorem]{Proposition}

\theoremstyle{fancy2}
\newtheorem{definition}[theorem]{Definition}

\newtheorem{remark}[theorem]{Remark}

\newtheorem{claim}[theorem]{Claim}

\setlist{leftmargin=*}

\numberwithin{equation}{section}
\begin{document}
\title{Uniqueness of shrinking K\"ahler-Ricci solitons on resolutions of K\"ahler cones}
\date{\today}

\author{Ronan J.~Conlon}
\address{Department of Mathematical Sciences, The University of Texas at Dallas, Richardson, TX 75080}
\email{ronan.conlon@utdallas.edu}
\author{Alix Deruelle}
\address{Universit\'e Paris-Saclay, CNRS, Laboratoire de math\'ematiques d'Orsay, 91405, Orsay, France}
\email{alix.deruelle@universite-paris-saclay.fr}

\date{\today}

\begin{abstract}
We show that any complete shrinking gradient K\"ahler-Ricci soliton on a resolution of a K\"ahler cone is necessarily asymptotically conical.
From a result of Esparza, it then follows that up to pullback by biholomorphism, there exists at most one complete shrinking gradient K\"ahler-Ricci soliton on
such a resolution. This confirms a special case of the uniqueness part of a conjecture of Song-Zhang. Some other consequences are also discussed.
\end{abstract}

%We construct a new example of a two-dimensional shrinking gradient K\"ahler-Ricci soliton with bounded scalar curvature and complete
%the classification of such manifolds.

\maketitle

\markboth{Ronan J.~Conlon, and Alix Deruelle}{Uniqueness of shrinking K\"ahler-Ricci solitons on resolutions of K\"ahler cones}

\tableofcontents

\section{Introduction}

\subsection{Overview}
A \emph{Ricci soliton} is a triple $(M,\,g,\,X)$, where $M$ is a Riemannian manifold endowed with a complete Riemannian metric $g$
and a complete vector field $X$, such that
\begin{equation}\label{hot}
\Ric(g)+\frac{1}{2}\mathcal{L}_{X}g=\lambda g
\end{equation}
for some $\lambda\in\mathbb{R}$. The vector field $X$ is called the
\emph{soliton vector field}. If $X=\nabla^{g} f$ for some smooth real-valued function $f$ on $M$,
then we say that $(M,\,g,\,X)$ is \emph{gradient}. In this case, the soliton equation \eqref{hot}
becomes $$\Ric(g)+\operatorname{Hess}_{g}(f)=\lambda g,$$
and we call $f$ the \emph{soliton potential}. In the case of gradient Ricci solitons, the completeness of $X$ is guaranteed by the completeness of $g$
\cite{zhang12}.

Let $(M,\,g,\,X)$ be a Ricci soliton. If $g$ is K\"ahler with K\"ahler form $\omega$ and $X$ is real holomorphic, then we say that $(M,\,g,\,X)$ 
(or $(M,\,\omega,\,X))$ is a \emph{K\"ahler-Ricci soliton}. If $(M,\,g,\,X)$ is in addition gradient, then \eqref{hot} may be rewritten as
\begin{equation}\label{krseqn}
\rho_{\omega}+i\partial\bar{\partial}f=\lambda\omega,
\end{equation}
where $\rho_{\omega}$ is the Ricci form of $\omega$ and $f$ is the soliton potential.

Finally, a Ricci soliton and a K\"ahler-Ricci soliton are called \emph{steady} if $\lambda=0$, \emph{expanding}
if $\lambda<0$, and \emph{shrinking} if $\lambda>0$ in \eqref{hot}.
One can always normalise $\lambda$, when non-zero, to satisfy $|\lambda|=1$. We henceforth assume that this is the case.

Ricci solitons are interesting both from the point of view of canonical metrics and of the Ricci flow. On one hand, they represent one direction in which the notion of an Einstein manifold can be generalised. On compact manifolds, shrinking Ricci solitons are known to exist in several instances where there are obstructions to the existence of Einstein metrics; see for example \cite{soliton}. By the maximum principle, there are no nontrivial expanding or steady Ricci solitons on compact manifolds. However, there are many examples on noncompact manifolds; see for example \cite{Cao-KR-sol, FIK, Wang, con-der, schafer2, schaffer, heather, conlon33} among others. On the other hand, one can associate to a Ricci soliton a self-similar solution of the Ricci flow, and gradient shrinking Ricci solitons in particular provide models for finite-time Type I singularities of the flow \cite{topping, naber}. From this perspective, it is an important problem to classify and construct examples of such solitons in order to better understand singularity formation in the Ricci flow.

In this article, we study the uniqueness of complete shrinking gradient Kähler-Ricci solitons on anti-canonically polarised resolutions of Kähler cones. We show that any such soliton on such a resolution must be asymptotically conical. It then follows from a result of Esparza \cite{Esp25} that there exists at most one soliton of this type on a given resolution. 
In addition, our result allows us to remove the assumption of bounded Ricci curvature from the classification results of \cite[Theorem E]{cds}, thereby demonstrating that
up to pullback by an element of $GL(n, \mathbb{C})$, the only complete shrinking gradient K\"ahler-Ricci soliton on $\mathbb{C}^n$ is the flat Gaussian shrinking soliton and on the total space of $\mathcal{O}(-k) \to \mathbb{P}^{n-1}$ for $0 < k < n$ is the $U(n)$-invariant example of Feldman-Ilmanen-Knopf \cite{FIK}.

\subsection{Main results}

We first show that the asymptotic geometry of the shrinking gradient K\"ahler-Ricci soliton is determined by the complex geometry. 
\begin{mtheorem}[Asymptotics of shrinkers on resolutions of cones]\label{classify1}
%Let $C_{0}$ be a K\"ahler cone with complex structure $J_{0}$ and radial function $r$, and let 
%$\pi:M\to C_{0}$ be a resolution of $C_{0}$ that is equivariant with respect to
%the real holomorphic torus action on $C_{0}$ generated by the flow of $J_{0}r\partial_{r}$
%and such that $-K_{M}$ is $\pi$-ample. Then 
Every complete shrinking gradient K\"ahler-Ricci soliton on a resolution $\pi:M\to C_{0}$ of a K\"ahler cone $C_{0}$ has quadratic curvature decay, or equivalently, is asymptotically conical.
\end{mtheorem}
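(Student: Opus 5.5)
Outside the compact exceptional set $E=\pi^{-1}(o)$, $M$ is biholomorphic to $C_0\setminus\{o\}$. So the task is to show that the soliton metric $\omega$ becomes asymptotic to a conical metric at infinity. We already know that a shrinking gradient Kähler--Ricci soliton with bounded scalar curvature, or more generally quadratically decaying curvature, is asymptotically conical; this is the known equivalence between quadratic curvature decay and asymptotic conicity for shrinkers (Munteanu--Wang, Conlon--Deruelle--Sun). It therefore suffices to prove $|\Rm(g)|\leq C f^{-1}$ at infinity. We plan to obtain this in two steps. First we bound the Ricci (equivalently scalar) curvature using the complex structure. Then we upgrade to full curvature decay via Munteanu--Wang: a shrinker with $|\Ric|\to0$ at infinity has $|\Rm|\leq C/f$, and is therefore asymptotically conical.

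\textbf{Step 1 (identify the soliton vector field).} We first show that $JX$ generates a real torus action on $M$ which, on $M\setminus E\cong C_0\setminus\{o\}$, agrees with a holomorphic Reeb-type action on the cone. Concretely, $X$ is a complete holomorphic vector field and $JX$ is Killing. Holomorphic vector fields on the resolution descend to holomorphic vector fields on $C_0$ fixing the apex. The closure of the flow of $JX$ in the isometry group is a compact torus $T$ acting holomorphically on $C_0$. Since $f$ is a proper potential for the $T$-action (shrinkers have $f\sim \frac14 d^2$), the holomorphic vector field $X-iJX$ has the apex as an attracting fixed point in the sense of the $\mathbb{C}^*$-closure. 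This means that $\xi=JX$ lies in the Reeb cone of $C_0$. By the results on Kähler cones (e.g.\ Collins--Székelyhidi, Martelli--Sparks--Yau), there is then a Kähler cone metric $\omega_{\xi}$ on $C_0$ with Reeb field $\xi$ and radial vector $r\partial_r=-J\xi$, where the sign is fixed by the normalisation.

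\textbf{Step 2 (potential comparison and Ricci bound).} On the cone write $\omega_\xi=\frac{i}{2}\partial\bar\partial r^2$. Using the $T$-invariance and the soliton equation, the Hamiltonian of $JX$ with respect to $\omega$ is $f$, up to a constant. The Hamiltonian of $\xi$ with respect to $\omega_\xi$ is $r^2/2$. Since $\omega$ and $\omega_\xi$ are both $T$-invariant Kähler forms on the end and $H^2$ considerations on the link permit it, we can write $\omega=\omega_\xi+i\partial\bar\partial\varphi$ with $\varphi$ $T$-invariant outside a compact set. Comparing moment maps gives $f-\frac{r^2}{2}=-\frac12 X\cdot\varphi+$const. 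The Ricci form satisfies $\rho_\omega=\omega-i\partial\bar\partial f$. With a holomorphic volume form on the cone of weight fixed by $\xi$ (normalised using $-K_M$ $\pi$-ample, which ensures $\mathcal{L}_{\xi}\Omega$ has the right charge), we have $\rho_\omega=-i\partial\bar\partial\log(\omega^n/(i^{n^2}\Omega\wedge\bar\Omega))$. We then want a weighted estimate for $\varphi$ along the flow of $X$. Along the flow of $-X/|X|^2$, i.e.\ rescaling the cone, the rescaled metrics $\lambda^{-2}\Phi_\lambda^*\omega$ have uniformly bounded $C^0$ potentials relative to $\omega_\xi$. This uses Cao--Zhou volume growth, $\mathrm{vol}(B(p,R))\leq CR^{2n}$, and the fact that $|\nabla f|^2+R=f$ gives $R\leq f$. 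A Chern--Lu / Yau $C^2$ argument for the complex Monge--Ampère equation $\omega^n=e^{-f+\dots}\Omega\wedge\bar\Omega$ on annuli of the cone then yields $C^{-1}\omega_\xi\leq\omega\leq C\omega_\xi$ on the end. Evans--Krylov and scaling then give $|\Ric|=O(r^{-2})$.

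\textbf{Main obstacle.} The crux is the uniform $C^0$ and $C^2$ control of the rescaled potentials without any a priori curvature assumption. In particular, the Schwarz/Chern--Lu lemma needs a lower Ricci or bisectional bound on one side. We would try to apply Chern--Lu to $\mathrm{id}:(M,\omega)\to(C_0,\omega_\xi)$, using $\Ric(\omega)=\omega-\mathrm{Hess} f$ together with a Bakry--Émery version of the maximum principle (the drift Laplacian $\Delta_f$ absorbs the Hessian term). The barrier would be built from $f$ and $r^2$, exploiting that $f/r^2$ is controlled by the moment map identity. Once two-sided metric equivalence holds, quadratic curvature decay follows by standard local regularity, and the cited equivalence gives asymptotic conicity.
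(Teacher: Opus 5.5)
\textbf{There is a genuine gap, at the zeroth-order estimates.} Your architecture is the paper's: identify $JX$ as a Reeb field, compare the soliton with a conical background through a complex Monge--Amp\`ere equation, get two-sided metric equivalence from a drift Schwarz/Chern--Lu lemma, then bootstrap. Step 1 is essentially Proposition \ref{jenny}(iv)--(v), modulo the input from Sun--Zhang that the zero set of $X$ is compact and lies in $E$, which you need for positivity of the weights. The gap is exactly where you place the crux, and the justification you give there does not work.

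What is needed is $|\varphi|\leq C(f+1)$ and, above all, a two-sided bound on the log-volume ratio $X\cdot\varphi-2\varphi$. That bound is what gives $C^{-1}f\leq f_{\varphi}\leq Cf$ (your ``$f/r^{2}$ is controlled''). None of your three sources provides it:
\begin{itemize}
\item the moment map identity merely rewrites $f-\frac{r^{2}}{2}$ as a multiple of $X\cdot\varphi$, so it controls nothing;
\item Cao--Zhou volume growth only bounds averages of $\omega^{n}/\omega_{\xi}^{n}$ over balls whose relation to $\{r\leq R\}$ is itself unknown;
\item $R\leq f$ lets $R$ grow quadratically, so it carries no scale-invariant information.
\end{itemize}
Hence your Chern--Lu barrier ``built from $f$ and $r^{2}$'' presupposes $f\sim r^{2}$ and is circular. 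You cannot fall back on purely local $C^{2}$ estimates on annuli either: these are not available for the complex Monge--Amp\`ere equation from $C^{0}$ data alone. Some maximum principle on the non-compact end, backed by genuine a priori control, is unavoidable.

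\textbf{How the paper supplies this control.}
\begin{itemize}
\item The useful inequality is $R_{g_{\varphi}}\geq 0$, not $R\leq f$. With the normalisation $\Delta_{g_{\varphi},X}f_{\varphi}+2f_{\varphi}=0$ it gives $X\cdot X\cdot\varphi\leq 2X\cdot\varphi$ at infinity. Hence $\varphi,\,X\cdot\varphi,\,f_{\varphi}\leq Cf+C'$, together with the easy bound $X\cdot\varphi-2\varphi\leq C$ (Proposition \ref{prop-1-t-1}).
\item Weighted moment bounds (Proposition \ref{prop-bds-moments}) and the $L^{2}$ maximum principle of Lemma \ref{l2-ppe-max} replace any a priori growth assumption. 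They give $\varphi\geq -C(f_{\varphi}+1)$ (Proposition \ref{prop-low-bd-phi}).
\item The hard lower bound $X\cdot\varphi-2\varphi\geq -C$ (Proposition \ref{prop-2-t-1}) uses the identity $\Delta_{\omega_{\varphi},X}(\frac{X}{2}\cdot\varphi-\varphi)=\tr_{\omega_{\varphi}}\rho_{\omega_{0}}$ (Lemma \ref{lemma-set-eqn-log-vol}). It combines this with the quadratic decay of $\rho_{\omega_{0}}$, the barrier $\varphi/f_{\varphi}$, and Lemma \ref{l2-ppe-max} again.
\item This yields $f_{\varphi}\sim f$ (Corollary \ref{coro-pot-bd}). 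Only then does the drift Schwarz lemma close, using $\log\tr_{\omega_{\varphi}}\omega\in L^{2}(e^{-f_{\varphi}}\omega_{\varphi}^{n})$.
\end{itemize}

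\textbf{Your Step 2 set-up also fails in general.} On the end, $[\omega]=2\pi c_{1}(M)$ need not vanish in $H^{2}$ of the link. So $\omega-\omega_{\xi}$ need not be $i\partial\bar{\partial}$-exact there, and $C_{0}$ need not carry even a multivalued holomorphic volume form $\Omega$. For example, take $M=\operatorname{Tot}(\mathcal{O}(-1,-1)\to\mathbb{P}^{2}\times\mathbb{P}^{1})$: $-K_{M}$ is $\pi$-ample, but $c_{1}(M)$ is nonzero on the link. The fix is the paper's background $\omega_{0}+\rho_{\omega_{0}}$ with the correction function $F$ of Proposition \ref{mainprop}.

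\textbf{Smaller points.} Bounded scalar curvature does not imply conical asymptotics (e.g.\ $\mathbb{C}\times\mathbb{P}^{1}$). Quadratic curvature decay is the stronger hypothesis, and it is the one the equivalence uses. Your endgame (Evans--Krylov plus Munteanu--Wang) would be a legitimate alternative to the paper's $C^{3}$ and curvature estimates, but only once metric equivalence is in hand.
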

\noindent The precise structure of asymptotically conical shrinking Kähler-Ricci solitons, together with the equivalence to the condition of quadratic curvature decay, may be found in \cite[Theorem A]{cds}. The resolution $\pi:M\to C_{0}$ defines what is known as a ``polarised Fano fibration'' in the sense of \cite[Definition 2.5]{JunshengSong}. It follows that $M$ is quasi-projective.

Although Theorem \ref{classify1} asserts that the shrinking soliton is asymptotically conical, 
one cannot a priori determine the asymptotic cone, reflecting the fact that, as noted in \cite{JunshengSong}, constructing 
shrinking gradient K\"ahler-Ricci solitons is a free boundary problem. The conical asymptotics are detected through the 
quadratic curvature decay of the shrinking soliton. Note that it is already known that the curvature of complete shrinking gradient K\"ahler-Ricci soliton surfaces is bounded \cite{wangli}.

From Theorem \ref{classify1}, it immediately follows from a theorem of Esparza \cite[Theorem 1.1]{Esp25} that 
shrinking gradient K\"ahler-Ricci solitons on resolutions of K\"ahler cones are unique up to pullback by biholomorphism.
\begin{mcor}[Uniqueness of shrinkers on resolutions of cones]\label{classify2}
%Let $C_{0}$ be a K\"ahler cone with complex structure $J_{0}$ and radial function $r$, and let 
%$\pi:M\to C_{0}$ be a resolution of $C_{0}$. 
%that is equivariant with respect to the real holomorphic torus action on $C_{0}$ generated by the flow of $J_{0}r\partial_{r}$ and such that $-K_{M}$ is $\pi$-ample.
Up to pullback by biholomorphism, there exists at most one complete shrinking gradient K\"ahler-Ricci soliton on a resolution $\pi:M\to C_{0}$ of a K\"ahler cone $C_{0}$.
\end{mcor}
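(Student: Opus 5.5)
The plan is to deduce Corollary \ref{classify2} directly from Theorem \ref{classify1} together with Esparza's uniqueness theorem \cite[Theorem 1.1]{Esp25}. That theorem concerns asymptotically conical shrinking gradient K\"ahler-Ricci solitons on a fixed complex manifold. As I recall it, any two complete shrinking gradient K\"ahler-Ricci solitons on $M$ that are asymptotically conical (in the sense of \cite[Theorem A]{cds}) differ by pullback by a biholomorphism of $M$. Esparza's argument combines the uniqueness of the soliton vector field, via minimisation of a weighted volume functional in the spirit of Tian-Zhu and Berman-Witt Nystr\"om, with a continuity or Bando-Mabuchi type argument for the potentials.

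Let $(\omega_{1},\,X_{1})$ and $(\omega_{2},\,X_{2})$ be two complete shrinking gradient K\"ahler-Ricci solitons on the resolution $\pi:M\to C_{0}$. The first step is to apply Theorem \ref{classify1} to each of them. This shows that each soliton has quadratic curvature decay, and hence, by \cite[Theorem A]{cds}, that each is asymptotically conical with some asymptotic cone.

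The second step is to check that the hypotheses of \cite[Theorem 1.1]{Esp25} hold. Esparza works on a complex manifold carrying a holomorphic structure compatible with the conical end, such as a quasi-projective manifold or a polarised Fano fibration in the sense of \cite{JunshengSong}. I would note, as in the remark following Theorem \ref{classify1}, that $\pi:M\to C_{0}$ is a polarised Fano fibration and that $M$ is quasi-projective. The one potential obstacle is whether Esparza requires the two solitons to share the same asymptotic cone, or the same soliton vector field up to conjugation. If so, I would argue as follows. The soliton vector field of an asymptotically conical shrinker is determined by the complex geometry of $M$ through the vanishing of the Futaki-type invariant, or equivalently as the minimiser of the weighted volume functional on the Reeb cone of the torus acting on $C_{0}$. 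The two vector fields are then conjugate under $\operatorname{Aut}(M)$. After pulling back by such an automorphism, we may assume $X_{1}=X_{2}$, and the asymptotic cones then agree as well.

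The final step is to apply \cite[Theorem 1.1]{Esp25}. It gives a biholomorphism $\varphi$ of $M$ with $\varphi^{*}\omega_{2}=\omega_{1}$, which is the claim. I expect essentially no difficulty here beyond matching the setup of \cite{Esp25}. All of the analytic content lies in Theorem \ref{classify1}.
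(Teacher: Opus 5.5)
Your proposal is correct and follows the paper's argument: Theorem \ref{classify1} gives quadratic curvature decay (equivalently, asymptotic conicality) for every complete shrinker on $M$, and \cite[Theorem 1.1]{Esp25} then gives uniqueness up to pullback by biholomorphism. Your contingency step matching the soliton vector fields is not needed as a separate argument, because Esparza's theorem already handles it internally, via uniqueness of the weighted-volume minimiser and conjugacy of maximal tori \cite[Proposition 5.17]{Esp25}.
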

\noindent In particular, it follows that up to pullback by biholomorphism, the flat Gaussian soliton on $\mathbb{C}^{n}$, together with the
shrinking gradient K\"ahler-Ricci soliton examples of \cite{charles1, FIK, chili, Wang} and \cite{futaki3, futaki-wang} are the only such solitons on their respective
underlying complex manifold. Subsequently, this removes the bounded Ricci curvature condition from 
the uniqueness results \cite[Corollary D]{charlie} and \cite[Theorem E]{cds}. 

It is interesting to observe that up to pullback by biholomorphism, the flat Gaussian shrinking soliton is the only 
complete shrinking gradient K\"ahler-Ricci soliton on $\mathbb{C}^{n}$. This is in stark contrast to the existence of complete Calabi-Yau metrics on $\mathbb{C}^{n}$, of which there are many; see for example \cite{cy1, cy3, cy2, cy5, cy4, cy6}. This result also provides evidence for \cite[Conjecture 6.3]{JunshengSong}.
Uniqueness of shrinking gradient K\"ahler-Ricci solitons up to pullback by biholomorphism on polarised fibrations 
is expected in general; cf.~\cite[Conjecture 6.1]{JunshengSong}. Corollary \ref{classify2} confirms the uniqueness part of this conjecture in a special case.

The results of \cite{cds, Esp-charlie} also yield the following consequences of the boundedness of the curvature given by Theorem \ref{classify1}. Note that, as explained on \cite[p.42]{Esp25},
there always exist maximal tori in the automorphism group of 
a non-compact analytic space.
\begin{mcor}[Properties of shrinkers on resolutions of cones]\label{classify3}
Let $(M,\,g,\,X)$ be a complete shrinking gradient K\"ahler-Ricci soliton with complex structure $J$ on a resolution $\pi:M\to C_{0}$ of a K\"ahler cone $C_{0}$. Then:
\begin{enumerate}[label=\textnormal{(\alph{*})}, ref=(\alph{*})]
\item There exists a maximal torus $\mathbb{T}$ in the automorphism group of $(M,\,J)$ acting isometrically on $g$ with $JX$ in its Lie algebra $\operatorname{Lie}(\mathbb{T})$. 
\item $JX$ is the unique minimiser of the weighted volume functional, a real-valued strictly convex function defined on a certain open convex cone of $\operatorname{Lie}(\mathbb{T})$.
\item $(\mathbb{T}\curvearrowright\pi:M\to C_{0},\,d\pi(JX))$ defines a K-polystable polarised Fano fibration. 
\end{enumerate}
\end{mcor}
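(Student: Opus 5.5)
Corollary \ref{classify3} follows by combining Theorem \ref{classify1} with results already in the literature for shrinkers with bounded curvature. The order of steps is as follows.

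\textbf{Step 1: bounded curvature.} By Theorem \ref{classify1}, $(M,\,g,\,X)$ has quadratic curvature decay. It is therefore asymptotically conical in the sense of \cite[Theorem A]{cds}, and in particular its full curvature tensor, and hence its Ricci curvature, is bounded. This is exactly the hypothesis under which the structural results of \cite{cds} and \cite{Esp-charlie} apply. That hypothesis was previously imposed a priori, and Step 1 is the only new input; the rest is bookkeeping.

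\textbf{Step 2: parts (a) and (b).} Next I would invoke \cite{cds}. For a complete shrinking gradient K\"ahler-Ricci soliton with bounded Ricci curvature, $JX$ is a Killing field. The closure of the flow of $JX$ in the isometry group is a compact torus acting holomorphically, and we enlarge it to a maximal torus $\mathbb{T}$ of $\operatorname{Aut}(M,\,J)$ acting isometrically. Such maximal tori exist in the non-compact setting by the remark from \cite[p.42]{Esp25}. The isometric action can be arranged either by averaging $g$ over $\mathbb{T}$ and appealing to uniqueness, or directly from the statement in \cite{cds}. This gives (a). For (b), I would use the weighted volume functional $F(\xi)=\int_M e^{-f_\xi}\,\omega^n$ from \cite{cds}. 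It is defined on the open convex cone of $\operatorname{Lie}(\mathbb{T})$ where the hamiltonian potentials are proper and bounded below. Its strict convexity and the fact that $JX$ is its unique critical point, hence its unique minimiser, are proved in \cite{cds} under bounded Ricci curvature, so (b) follows from Step 1.

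\textbf{Step 3: part (c).} Finally, I would quote \cite{Esp-charlie}, where a shrinking gradient K\"ahler-Ricci soliton on a polarised Fano fibration with bounded curvature is shown to make $(\mathbb{T}\curvearrowright\pi:M\to C_0,\,d\pi(JX))$ K-polystable. Here $d\pi(JX)$ makes sense because $\pi$ is equivariant: $\pi$ is the affinisation, so every holomorphic vector field on $M$ descends to $C_0$. The fibration is polarised Fano in the sense of \cite[Definition 2.5]{JunshengSong}, as noted after Theorem \ref{classify1}.

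\textbf{Main obstacle.} Given Theorem \ref{classify1}, the only real difficulty is checking that the hypotheses in \cite{cds, Esp-charlie} match the present setting. In particular, those results may require the torus action to be compatible with the resolution $\pi$, and that compatibility must be verified here.
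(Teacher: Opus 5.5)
Your proposal is correct and matches the paper, which likewise gets the corollary by feeding the curvature control (equivalently, the asymptotically conical structure) from Theorem \ref{classify1} into \cite{cds} for (a)--(b) and \cite{Esp-charlie} for (c), with \cite[p.42]{Esp25} supplying the existence of maximal tori. Drop the alternative in Step 2 of averaging $g$ over $\mathbb{T}$ and appealing to uniqueness: averaging does not preserve the soliton equation, and Corollary \ref{classify2} only gives uniqueness up to biholomorphism, so the isometric maximal torus has to come from the cited structural results.
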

\noindent We refer the reader to \cite[Definition 5.14]{cds} and the surrounding section for the definition and properties of the weighted volume functional, and to
 \cite[Sections 2 \& 5]{JunshengSong} for the definitions of polarised Fano fibrations and K-stability, respectively. We remark that any two maximal tori in the 
 automorphism group of $(M,\,J)$ admitting soliton vector fields in their Lie algebra are conjugate by \cite[Proposition 5.17]{Esp25}. 

Our method of proof of Theorem \ref{classify1} relies on the complex Monge-Amp\`ere equation. The key observation is that 
a K\"ahler cone metric with its radial vector field $r\partial_{r}$ defines a shrinking gradient K\"ahler-Ricci soliton up to terms of order $O(r^{-2})$.
Given the underlying complex manifold, one must first ascertain that the soliton vector field is in fact conical. This follows from the fact that the zero set of the 
soliton vector field is compact \cite[Proposition 3.5]{JunshengSong}, whereas previously in \cite{cds} assuming bounded Ricci curvature was necessary to 
achieve this fact. Once this has been established, we can construct a background metric which is asymptotically conical, hence asymptotically a shrinking gradient K\"ahler-Ricci soliton.
The next observation is that any shrinking gradient K\"ahler-Ricci soliton on the underlying complex manifold differs from our constructed background metric by $i\partial\bar{\partial}\varphi$ for some smooth real-valued function $\varphi$ that satisfies a complex Monge-Amp\`ere equation, the data of which decays at rate $O(r^{-2})$. 
A posteriori estimates for this equation yield bounds on the growth of $\varphi$ and on its derivatives, resulting in 
quadratic curvature decay as stated in Theorem \ref{classify1}. Quadratic curvature decay for shrinking Ricci solitons implies conical asymptotics \cite{munty}. Knowing this, 
the uniqueness result of Esparza \cite{Esp25} yields Corollary \ref{classify2}.

\subsection{Acknowledgements}
The first author is supported by a Simons Travel Grant. The second author is partially supported by grants from the
French National Research Agency  ANR-24-CE40-0702 (Project OrbiScaR) and the Charles Defforey Fondation-Institut de France via the project
``KRIS''. He also benefits from a Junior Chair from the Institut Universitaire de France.

\section{Preliminaries}\label{sec_preliminaries}

\subsection{Riemannian cones} For us, the definition of a Riemannian cone will take the following form.

\begin{definition}\label{cone}
Let $(S, g_{S})$
be a compact connected Riemannian manifold. The \emph{Riemannian cone} $C_{0}$ with \emph{link} $S$ is defined to be $\R^+ \times S$ with metric $g_0 = dr^2 \oplus r^2g_{S}$ up to isometry. The radius function $r$ is then characterized intrinsically as the distance from the apex in the metric completion.
\end{definition}

Suppose that we are given a Riemannian cone $(C_0,g_{0})$ as above. Let $(r,x)$ be polar coordinates on $C_{0}$, where $x\in S$, and for $t>0$, define a map
$$\nu_{t}: [1,2]\times S \ni (r,x) \mapsto (tr,x) \in [t,2t] \times S.$$ One checks that $\nu_{t}^{*}(g_{0})=t^{2}g_{0}$ and $\nu^{*}_{t}\circ\nabla^{g_0}=\nabla^{g_0}\circ\nu_{t}^{*}$, where $\nabla^{g_0}$ is the  Levi-Civita connection of $g_{0}$. From this, we deduce

\begin{lemma}\label{simple321}
Suppose that $\alpha\in\Gamma((TC_0)^{\otimes p}\otimes (T^{*}C_0)^{\otimes q})$ satisfies $\nu_{t}^{*}(\alpha)=t^{k}\alpha$ for every $t>0$ for some $k\in\R$. Then $|(\nabla^{g_0})^{\ell}\alpha|_{g_{0}}=O(r^{k+p-q-\ell})$ for all $\ell\in\N_0$.
\end{lemma}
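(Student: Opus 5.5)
The proof is a scaling argument: the quantity $|(\nabla^{g_0})^{\ell}\alpha|_{g_0}$ transforms homogeneously under the dilations $\nu_t$. Compactness of the annulus $[1,2]\times S$ then gives a uniform constant, and we transport it to every annulus $[t,2t]\times S$. Throughout, $\nu_t$ is viewed as the global diffeomorphism $(r,x)\mapsto(tr,x)$ of $C_0$, which restricts to the map in the statement. The identities $\nu_t^*g_0=t^2g_0$ and $\nu_t^*\circ\nabla^{g_0}=\nabla^{g_0}\circ\nu_t^*$ hold on all of $C_0$.

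\textbf{Step 1: commute the pullback with $\nabla^{\ell}$.} Iterating the commutation of $\nu_t^*$ with $\nabla^{g_0}$ gives
\[
\nu_t^*\bigl((\nabla^{g_0})^{\ell}\alpha\bigr)=(\nabla^{g_0})^{\ell}(\nu_t^*\alpha)=t^k(\nabla^{g_0})^{\ell}\alpha .
\]
So $\beta:=(\nabla^{g_0})^{\ell}\alpha$ is again homogeneous of degree $k$. It is a section of $(TC_0)^{\otimes p}\otimes(T^*C_0)^{\otimes (q+\ell)}$.

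\textbf{Step 2: how the norm scales.} Take a tensor $\beta$ of type $(p,q')$ with $q'=q+\ell$, and a point $y$. Since $\nu_t$ is a diffeomorphism, pulling back commutes with evaluating norms:
\[
|\nu_t^*\beta|_{\nu_t^*g_0}(y)=|\beta|_{g_0}(\nu_t(y)).
\]
Now replace the metric. Each covariant slot contributes a factor of the inverse metric, and each contravariant slot a factor of the metric. Hence, for the rescaled metric $t^2g_0$,
\[
|\gamma|_{t^2g_0}=t^{p-q'}|\gamma|_{g_0}.
\]
Combining this with $\nu_t^*\beta=t^k\beta$ gives
\[
|\beta|_{g_0}(\nu_t(y))=t^{p-q'}\,t^k\,|\beta|_{g_0}(y).
\]

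\textbf{Step 3: compactness and rescaling.} The annulus $[1,2]\times S$ is compact because $S$ is, and $\beta$ is smooth. So
\[
C:=\sup_{[1,2]\times S}|\beta|_{g_0}<\infty .
\]
Any point with $r\ge1$ can be written as $\nu_t(y)$, where $y$ lies in this annulus, $t\in[r/2,r]$, and in fact one may take $y=(1,x)$, $t=r$. Therefore
\[
|\beta|_{g_0}\le C\,t^{k+p-q-\ell}\le C'\,r^{k+p-q-\ell},
\]
since $t$ is comparable to $r$. This holds for all $r>0$, not just $r\ge 1$, and the statement follows.

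The only step requiring care is the sign bookkeeping in Step 2: contravariant slots contribute $+1$ to the exponent and covariant slots $-1$. The rest is routine.
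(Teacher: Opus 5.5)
Your proof is correct and is the same scaling argument the paper intends. The paper only records $\nu_t^*g_0=t^2g_0$ and $\nu_t^*\circ\nabla^{g_0}=\nabla^{g_0}\circ\nu_t^*$ and says the lemma follows; your Steps 1--3 fill in the details, and taking $y=(1,x)$, $t=r$ even gives the exact identity $|(\nabla^{g_0})^{\ell}\alpha|_{g_0}(r,x)=r^{k+p-q-\ell}\,|(\nabla^{g_0})^{\ell}\alpha|_{g_0}(1,x)$.
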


We shall say that ``$\alpha=O(r^{\lambda})$ with $g_{0}$-derivatives'' whenever $|(\nabla^{g_0})^{k}\alpha|_{g_{0}}=O(r^{\lambda-k})$ for every $k \in \N_0$.
We will then also say that $\alpha$ has ``rate at most $\lambda$'', or sometimes, for simplicity, ``rate $\lambda$'', although it should be understood that (at least when $\alpha$ is purely polynomially behaved and does not contain any $\log$ terms) the rate of $\alpha$ is really the infimum of all $\lambda$ for which this holds.

\subsection{K{\"a}hler cones} 
We may further impose that a Riemannian cone is K\"ahler, as the next definition
demonstrates. Boyer-Galicki \cite{book:Boyer} is a comprehensive reference here.

\begin{definition}A \emph{K{\"a}hler cone} is a Riemannian cone $(C_0,g_0)$ such that $g_0$ is K{\"a}hler, together with a choice of $g_0$-parallel complex structure $J_0$. This will in fact often be unique up to sign. We then have a K{\"a}hler form $\omega_0(X,Y) = g_0(J_0X,Y)$, and $\omega_0 = \frac{i}{2}\p\bar{\p} r^2$ with respect to $J_0$.
\end{definition}

The vector field $r\partial_{r}$ on a K\"ahler cone is real holomorphic, and $J_{0}r\partial_r$ is real holomorphic and Killing \cite[Appendix A]{MSY}. This latter vector field is known as the \emph{Reeb field}.  The closure of its flow in the
isometry group of the link of the cone generates the holomorphic isometric action of a real torus on
$C_{0}$ that fixes the apex of the cone.

Every K\"ahler cone is affine algebraic.
\begin{theorem}\label{t:affine}
For every K{\"a}hler cone $(C_{0},g_0,J_0)$, the complex manifold $(C_{0},J_0)$ is isomorphic to the smooth part of a normal algebraic variety $V \subset \C^N$ with one singular point. In addition, $V$ can be taken to be invariant under a $\C^*$-action $(t, z_1,\ldots,z_N) \mapsto (t^{w_1}z_1,\ldots,t^{w_N}z_N)$ such that all $w_i $ are positive.
\end{theorem}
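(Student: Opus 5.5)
The plan is to follow the approach of van Coevering (and Conlon--Hein), building $V$ as the spectrum of the ring of holomorphic functions on $C_{0}$ that grow polynomially in $r$. First I would use the Reeb field $\xi = J_{0}r\partial_{r}$. The closure of its flow is a compact torus $T$ acting holomorphically and isometrically on $C_{0}$. Its complexification, generated together with $r\partial_r$, acts holomorphically. Define the ring
\[
R=\{f\in\mathcal O(C_0) : |f|=O(r^{k}) \text{ as } r\to\infty \text{ for some } k\}.
\]
Decompose $R$ under $T$ into weight spaces. A holomorphic function $f$ with $\mathcal L_{r\partial_r}f = w f$, $w>0$, is homogeneous: $f(tr,x)=t^{w}f(r,x)$. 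By Lemma \ref{simple321}, its derivatives then have the expected homogeneous decay.

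Next I would show that $R$ is finitely generated and separates points and tangents. The approach is Hörmander $L^{2}$ estimates for $\bar\partial$ on the link region $\{1\le r\le 2\}$, using the weight $e^{-kr^{2}}$ with $k$ large. The needed curvature positivity comes from $\omega_0=\frac i2\partial\bar\partial r^2$. This produces holomorphic functions on $C_0$ with prescribed jets at any point; averaging over $T$ and projecting to weight spaces makes them homogeneous. Each weight space is finite dimensional, since homogeneous functions of degree $w$ are determined by their restriction to the compact link and one has elliptic estimates. A Noetherian argument, or pointwise separation together with compactness of the link, then yields finitely many homogeneous generators $z_1,\dots,z_N$ of positive weights $w_i$.

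The map $z=(z_1,\dots,z_N):C_0\to\C^N$ is equivariant for the $\C^*$-action $t\cdot z=(t^{w_1}z_1,\dots,t^{w_N}z_N)$ coming from the flow of $r\partial_r - iJ_0 r\partial_r$, or a rational approximation of the Reeb field. Here there is a technical issue. If $\xi$ is irregular, one first perturbs it to a quasi-regular Reeb field in the Lie algebra of $T$, which changes the cone metric but not $J_0$ or the ring $R$. Positivity of the weights then gives properness of $z$ as $r\to\infty$ and $r\to 0$. The map $z$ is an injective immersion, hence an embedding onto its image. Its image, together with $0$, is the affine variety $V=\operatorname{Spec}R$, which is normal: a bounded holomorphic function near the apex extends across it, by Riemann extension for the link-compactness argument. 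The only point added is the apex, so $V$ has at most one singular point.

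The main obstacle is the finite-generation/separation step, that is, the $L^2$ construction of enough homogeneous holomorphic functions with control of the weights. I would try first to carry this out via the quasi-regular reduction: there $C_0$ is the total space minus the zero section of a negative orbiline bundle over a compact Kähler orbifold, and Kodaira--Baily embedding directly gives the algebraic structure.
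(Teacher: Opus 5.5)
Your proposal is correct once you commit to the quasi-regular reduction in your last paragraph, and in that form it is essentially the van Coevering argument the paper cites. That argument replaces the Reeb field by a nearby quasi-regular one in the Lie algebra of $T$, which is still the Reeb field of a K\"ahler cone metric on the same $(C_0,J_0)$. Then $C_0$ is $L^{-1}$ minus its zero section for a positive orbi-line bundle $L$ over a compact K\"ahler orbifold $Z$. Baily's embedding theorem applies, and one takes $V=\operatorname{Spec}\bigoplus_{k\geq0}H^0(Z,L^k)$, whose grading gives the $\mathbb{C}^*$-action with positive integer weights.

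The H\"ormander $L^2$ construction you put first is then unnecessary, and as written it is not a proof. The region $\{1\le r\le 2\}$ is not pseudoconvex, and the finite-generation/separation step is exactly what that construction leaves open. So the quasi-regular reduction should be your main line, not a fallback.
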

\noindent This can be deduced from arguments written down by van Coevering in \cite[\S 3.1]{vanC4}.

The holomorphic torus action on a K\"ahler cone leads to the notion of an \emph{equivariant resolution}.
\begin{definition}\label{equivariantt}
Let $C_0$ be a K\"ahler cone and let $\pi:M\to C_0$ be a resolution of $C_0$. We say that
$\pi:M\to C_0$ is a $G$-\emph{equivariant resolution} with respect to the holomorphic action of a Lie group $G$ on $C_{0}$ if the $G$-action lifts via $\pi$ to a holomorphic action on $M$.
\end{definition}
Such a resolution of a K\"ahler cone always exists; see \cite[Proposition 3.9.1]{kollar}. The following will prove useful.

\begin{lemma}[\protect{\textnormal{\cite[Lemma 2.4]{conlon33}}}]\label{nice}
Let $(C_{0},\,g_{0})$ be a K\"ahler cone with Reeb vector field $\xi$ and let $K\subseteq C_{0}$ be a compact subset containing the apex of $C_{0}$ such that $C_{0}\setminus K$ is connected.
If $u:C_{0}\setminus K\to\mathbb{R}$ is a smooth real-valued function defined on $C_{0}\setminus K$
that is pluriharmonic (meaning that $\partial\bar{\partial}u=0$) and invariant under the flow of
$\xi$, then $u$ is a real constant.
\end{lemma}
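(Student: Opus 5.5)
The plan is to use the holomorphic vector field generated by the cone structure to show first that $u$ depends on $r$ only through a multiple of $\log r$, then to rule out the $\log r$ term by an integration over the link, and finally to conclude with a maximum principle.

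\emph{Step 1: the radial derivative is constant.} Let $Z:=\frac{1}{2}(r\partial_{r}-iJ_{0}r\partial_{r})=\frac{1}{2}(r\partial_{r}-i\xi)$ be the holomorphic $(1,0)$-vector field associated to the real holomorphic field $r\partial_{r}$. Since $\bar{\partial}\partial u=0$, the $(1,0)$-form $\partial u$ is holomorphic on $C_{0}\setminus K$. Hence $\partial u(Z)=Z u$ is a holomorphic function there. Using $\xi u=0$, we get
$$Zu=\tfrac{1}{2}\,r\partial_{r}u,$$
which is real-valued. A real-valued holomorphic function on the connected set $C_{0}\setminus K$ is constant, so $r\partial_{r}u=2c$ for some $c\in\R$.

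\emph{Step 2: reduction to a basic function on the link.} Since $K$ is compact, $C_{0}\setminus K$ contains an end $\{r>R\}\cong(R,\infty)\times S$. On this end we may write $u=2c\log r+w$, where $w$ satisfies $r\partial_{r}w=0$ and $\xi w=0$. Thus $w$ is the pullback of a function on $S$ that is basic for the Reeb foliation. Pluriharmonicity of $u$ gives
$$i\partial\bar{\partial}w=-2c\,i\partial\bar{\partial}\log r .$$
Now $i\partial\bar{\partial}\log r=\frac{1}{2}\omega^{T}$ is the pullback of the transverse K\"ahler form of the Sasakian link, with $d\eta=\omega^{T}$ up to normalisation. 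So the identity reads $dd^{c}_{B}w=-c\,\omega^{T}$ as basic forms on $S$. I will do this bookkeeping carefully with the standard conventions of \cite{book:Boyer}, but only the sign and positivity of the right-hand side matter.

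\emph{Step 3: killing $c$.} Wedge the identity of Step 2 with $(\omega^{T})^{n-2}\wedge\eta$ and integrate over $S$, where $n=\dim_{\C}C_{0}$. The left-hand side is
$$\int_{S}d\bigl(d^{c}_{B}w\wedge(\omega^{T})^{n-2}\wedge\eta\bigr)\pm\int_{S}d^{c}_{B}w\wedge(\omega^{T})^{n-1}.$$
The first term vanishes by Stokes. The second vanishes because $d^{c}_{B}w\wedge(\omega^{T})^{n-1}$ is a basic form of degree $2n-1$, which exceeds the transverse dimension $2n-2$. The right-hand side is $-c$ times $\int_{S}(\omega^{T})^{n-1}\wedge\eta$, a positive multiple of $\operatorname{vol}(S)$. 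Hence $c=0$. I expect this to be the main obstacle: it requires the sign conventions and the basic-form degree counting to be right.

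\emph{Step 4: conclusion.} With $c=0$, $w$ is a transversally pluriharmonic basic function on the compact manifold $S$. Its basic Laplacian vanishes, and it is $\xi$-invariant. At a maximum point, the strong maximum principle for the transversally elliptic operator $\Delta_{B}$ (equivalently, $\Delta_{g_{S}}w=\Delta_{B}w=0$ for basic functions) shows that $w$ is constant, since $S$ is connected. Therefore $u$ is constant on the end $\{r>R\}$. Finally, pluriharmonic functions are real-analytic, so by unique continuation on the connected set $C_{0}\setminus K$, $u$ is a real constant everywhere.
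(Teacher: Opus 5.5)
Your proof is correct. The paper does not prove this lemma; it quotes it from \cite[Lemma 2.4]{conlon33}, so there is no in-text argument to compare against. Your Steps 1--4 form a complete, self-contained proof: first $r\partial_{r}u$ is constant because $Zu$ is a real holomorphic function, then the $\log r$ term is ruled out on the link.

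\textbf{The hypothesis $n\geq 2$.} Step 3 genuinely uses $n=\dim_{\C}C_{0}\geq 2$, and the statement is false for $n=1$. In that case the transverse form $\omega^{T}$ vanishes. On a one-dimensional cone, $u=\log r$ is harmonic, hence pluriharmonic, and it is $\xi$-invariant. You should make this hypothesis explicit.

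\textbf{A shorter route for Step 3.} The step you flag as the main obstacle can be done without basic forms or sign bookkeeping. Pluriharmonicity gives $\Delta_{g_{0}}u=0$. On $\{r>R\}$ one has $\Delta_{g_{0}}(2c\log r+w)=r^{-2}\bigl(4c(n-1)+\Delta_{g_{S}}w\bigr)$. Integrating $\Delta_{g_{S}}w=-4c(n-1)$ over the compact link gives $c=0$. Then $w$ is harmonic on the compact connected $S$, hence constant.

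\textbf{Extending off the end.} The last step from $\{r>R\}$ to all of $C_{0}\setminus K$ also follows directly without appealing to real-analyticity. Apply the identity theorem to the holomorphic $(1,0)$-form $\partial u$, which vanishes on $\{r>R\}$, and use $du=2\Re(\partial u)$.

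Both ways of killing $c$ detect the same fact: $i\partial\bar\partial\log r$ has trace proportional to $2n-2\neq 0$. Your version is phrased in transverse K\"ahler geometry; the Laplacian version is shorter.
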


\subsection{Asymptotically conical shrinking K\"ahler-Ricci solitons}\label{cones2}

We also have:
\begin{definition}\label{d:ACK}
Let $(M,\,g)$ be a complete K\"ahler manifold with complex structure $J$ and let $(C_0,g_0)$ be a K\"ahler cone with a choice of $g_0$-parallel complex structure $J_0$. We call $M$ \emph{asymptotically conical} (AC) \emph{K\"ahler} with tangent cone $C_{0}$ if there exists a biholomorphism $\Phi: C_0\setminus K \to M \setminus K'$ with $K,K'$ compact, such that $\Phi^*g - g_0 = O(r^{-\epsilon})$ with $g_0$-derivatives for some $\epsilon > 0$. In particular, $(M,\,g)$ is AC with tangent cone $C_{0}$.
\end{definition}

AC K\"ahler manifolds only have one end. Indeed, they are $1$-convex by \cite[Lemma 2.15]{Conlon}, hence only have one end by \cite[p.454]{Rossi2}.
Notice that the curvature $\operatorname{Rm}_{g}$ of a complete AC K\"ahler manifold $(M,\,g)$ satisfies
\begin{equation}\label{eric}
A_{k}(g):=\sup_{x\in M}|\operatorname{Rm}_{g}|_{g}(x)d_{g}(p,\,x)^{2}<\infty,
\end{equation}
where $d_{g}(p,\,\cdot)$ denotes the distance to a fixed point $p\in M$ with respect to $g$. On the other hand, by \cite[Theorem A]{cds},
any complete shrinking gradient K\"ahler-Ricci soliton $(M,\,g)$ satisfying \eqref{eric} along its unique end \cite{munteanu}
is AC with $\varepsilon=2$ and $d\Phi(r\partial_{r})=X$ in Definition \ref{d:ACK}.

Moreover, $M$ is a quasi-projective equivariant resolution $\pi:M\to C_{0}$
of the tangent cone $C_{0}$ with respect to the real torus action on $C_{0}$ induced by the flow of the Reeb vector field \cite[Theorem A \& Proposition 2.25]{cds} with $-K_{M}$, admitting a hermitian metric with strictly positive curvature, being $\pi$-ample; cf.~\cite[Section 4.1]{cds} for the argument regarding this latter point. This is why, in contrast to the general definition of an AC K\"ahler manifold, we take the map $\Phi$ in Definition \ref{d:ACK} to be a biholomorphism. Conversely, any equivariant resolution $\pi:M\to C_{0}$ of a K\"ahler cone $C_{0}$ with respect to the aforementioned real torus action with $-K_{M}$ being $\pi$-ample is quasi-projective \cite{JunshengSong}. 

\subsection{Properties of shrinking K\"ahler-Ricci solitons}

In this section, we consider some properties of shrinking gradient K\"ahler-Ricci solitons. Throughout, we consider a shrinking gradient K\"ahler-Ricci soliton $(M,\,g,\,X)$ 
with complex structure $J$. No assumption is made on the curvature of $g$.

\subsubsection{The automorphism group}

As the following lemma demonstrates, the connected component of the identity of the Lie group of holomorphic isometries of $(M,\,J,\,g)$ that commute with the flow of $X$ is a compact Lie group. We denote this Lie group by $G_{0}^{X}$. This lemma was proved under the assumption of bounded Ricci curvature in \cite[Proposition 5.11]{cds}.

\begin{lemma}\label{class}
$G_{0}^{X}$ is a compact Lie group with respect to the compact-open topology.
\end{lemma}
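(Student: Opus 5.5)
The plan is to realise $G_{0}^{X}$ as a closed subgroup of the isometry group of $(M,\,g)$ that preserves a fixed compact set, and then invoke Myers--Steenrod together with Arzel\`a--Ascoli.

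\textbf{Step 1: $G_0^X$ is a closed Lie subgroup of $\operatorname{Isom}(M,g)$.} By Myers--Steenrod, $\operatorname{Isom}(M,g)$ is a Lie group in the compact-open topology. It acts properly on $M$ because $g$ is complete. The conditions of preserving $J$ and commuting with the flow of $X$ are both closed conditions. Hence the group of holomorphic isometries commuting with $X$ is a closed subgroup, and therefore a Lie group. Its identity component is $G_0^X$.

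\textbf{Step 2: elements of $G_0^X$ preserve the soliton potential.} Let $\phi\in G_{0}^{X}$. Since $\phi$ is an isometry commuting with the flow of $X=\nabla^g f$, we have $\phi_*X=X$. Therefore $\nabla^g(f\circ\phi)=\nabla^g f$, so $f\circ\phi=f+c_\phi$. The map $\phi\mapsto c_\phi$ is a continuous homomorphism to $\R$.

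To see that $c_\phi=0$, I would use the normalisation $R+|\nabla f|^2=f$. This identity holds after adding a constant to $f$, by Hamilton's identity. Both $R$ and $|\nabla f|^2$ are invariant under isometries fixing $X$. Hence $f\circ\phi=f$. (Alternatively, $f$ is proper and bounded below by Cao--Zhou, and translating by a nonzero $c$ along a group orbit would contradict this. In any case the normalised identity settles it.)

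\textbf{Step 3: properness of $f$.} By Cao--Zhou, $f$ grows like $\frac14 d_g(p,\cdot)^2$. In particular $f$ is proper and attains its minimum. This needs no curvature assumption.

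It follows that the compact set $K=\{f\le \min f+1\}$ is nonempty and invariant under all of $G_0^X$. Fix a point $x_0$ in $K$, for instance a minimum point of $f$. Then the orbit $G_0^X\cdot x_0$ lies in $K$.

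\textbf{Step 4: compactness.} A sequence of isometries $\phi_i$ with $\phi_i(x_0)$ in a compact set is equicontinuous, being $1$-Lipschitz. Arzel\`a--Ascoli together with completeness yields a subsequence converging uniformly on compacta to an isometry $\phi$. The limit preserves $J$ and commutes with the flow of $X$ by closedness.

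So the full group of $J$-preserving isometries commuting with $X$ is compact. Its identity component $G_0^X$ is a closed subgroup of it, hence also compact.

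\textbf{Main obstacle.} The only delicate point is the invariance $f\circ\phi=f$ in Step 2, together with using properness of $f$ without curvature bounds. Both are handled by Hamilton's identity and Cao--Zhou's estimate, which hold for any complete gradient shrinker. This is precisely where the earlier bounded Ricci curvature assumption of \cite[Proposition 5.11]{cds} is bypassed.
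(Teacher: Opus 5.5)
Your proof is correct. It agrees with the paper's compactness step but obtains the Lie group structure by a different route.

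\textbf{How the paper argues.} It gets that $G_{0}^{X}$ is a Lie group by citing \cite[Proposition 5.9]{cds}, after checking that the proof there only uses \cite[Lemma 2.34]{cds}. That lemma needs the zero set of $X$ to be compact, which is exactly where \cite[Proposition 3.5]{JunshengSong} enters. Compactness is then deduced, as in your Step 4, from Arzel\`a--Ascoli, properness of $f$, and invariance of the level sets of $f$. The paper quotes this last point from \cite[Proof of Proposition 5.10]{cds}.

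\textbf{How you argue.} You note that the group consists of isometries commuting with a flow, so Myers--Steenrod and the closed subgroup theorem already make it a Lie group. You also prove the invariance of $f$ directly: an isometry $\phi$ with $\phi_{*}X=X$ preserves $R_{g}$ and $|X|^{2}_{g}$, hence preserves $f$ by \eqref{equ:3}. With the paper's normalisation the identity reads $|X|^{2}_{g}+R_{g}-2f=\textrm{const.}$ rather than $R+|\nabla f|^{2}=f$, but only invariance is used, so this is immaterial.

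\textbf{What each route buys.} Your route is self-contained. It shows that for this lemma the compactness of the zero set of $X$ is not needed at all; only completeness of $g$ and the Cao--Zhou properness of $f$ are used. The paper's route is more economical on the page: it reuses the machinery of \cite{cds} and only verifies that the bounded Ricci curvature hypothesis there is superfluous.

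Two small remarks. Your Step 2 applies to every holomorphic isometry commuting with $X$, not only to elements of $G_{0}^{X}$, and this is what justifies your concluding claim that the full group is compact. Also, properness of the isometry action on a connected Riemannian manifold holds without completeness; completeness is only needed for Hopf--Rinow in the Arzel\`a--Ascoli step.
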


\begin{proof}
By \cite[Proposition 3.5]{JunshengSong}, we know that the zero set of $X$ is compact. Therefore $G_{0}^{X}$ is indeed a Lie group by \cite[Proposition 5.9]{cds}, where the assumption of bounded Ricci curvature is no longer required because the proof of this proposition uses only \cite[Lemma 2.34]{cds} which only requires that the zero set of $X$ is compact. Compactness of $G_{0}^{X}$ with respect to the compact open topology follows from the Arzel\`a-Ascoli theorem because the level sets of $f$ are compact by properness of $f$ and are preserved by the action of $G_{0}^{X}$ as demonstrated in \cite[Proof of Proposition 5.10]{cds}.
\end{proof}

\subsubsection{The soliton vector field}

In the case that $M$ is in addition ``$1$-convex'', meaning
that $M$ carries a plurisubharmonic exhaustion function which is strictly plurisubharmonic
outside of a compact set, the fact that the zero set of $X$ is compact thanks to \cite{JunshengSong} allows us to remove the assumption of bounded scalar curvature from \cite[Proposition 2.27]{cds} as we see below. Since a $1$-convex space is in particular holomorphically convex,
$M$ in this case will admit a ``Remmert reduction'' $p:M\to M'$ \cite{Grau:62}, i.e., a proper holomorphic map $p:M\to M'$
onto a normal Stein space $M'$ with finitely many isolated singularities obtained by contracting the maximal compact analytic subset $E$ of $M$.
As a Stein space with only finitely many isolated singularities, \cite[Theorem 3.1]{SCV6} asserts that $M'$ admits an embedding
$h:M'\to\mathbb{C}^{P}$ into $\mathbb{C}^{P}$ for some $P\in\mathbb{N}$. 

\begin{prop}\label{sexxy}
Let $(M,\,g,\,X)$ be a complete shrinking gradient K\"ahler-Ricci soliton of complex dimension $n$.
Assume that $M$ is $1$-convex with maximal compact analytic subset $E$. Then:
\begin{enumerate}[label=\textnormal{(\roman{*})}, ref=(\roman{*})]
\item if $E=\emptyset$, then the zero set of $X$ comprises a single point and $M$ is biholomorphic to $\mathbb{C}^{n}$, or
\item if $E\neq\emptyset$, then the zero set of $X$ is contained in $E$ (and is compact).
\end{enumerate}
\end{prop}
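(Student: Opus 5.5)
The plan is to follow \cite[Proposition 2.27]{cds}, replacing the bounded scalar curvature hypothesis by the compactness of the zero set of $X$ given by \cite[Proposition 3.5]{JunshengSong}. The starting point is that $X$ is real holomorphic, and $JX$ is Killing and holomorphic. The flow of $X$ (complete by \cite{zhang12}) together with that of $JX$ therefore defines a holomorphic action of $\mathbb{C}$ on $M$. This action descends via the Remmert reduction $p:M\to M'$: the maximal compact analytic subset $E$ is preserved by any biholomorphism, so the flows preserve the fibres of $p$. Since $p_{*}\mathcal{O}_M=\mathcal{O}_{M'}$, we obtain a holomorphic vector field $X'$ on the normal Stein space $M'$ with $dp(X)=X'$ away from $p(E)$.

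Next I will locate the zeros. Let $Z$ be the zero set of $X$, which is compact by \cite[Proposition 3.5]{JunshengSong}. The key point is that a point $x\in M\setminus E$ with $X(x)=0$ leads to a contradiction unless $E=\emptyset$ and $Z$ is a single point. Near a zero $x$ of $X$, the soliton equation gives $\operatorname{Hess}f(x)=g-\Ric(g)$ at $x$. I want to show that $f$ attains its minimum at every zero, or at least that the zeros are where the gradient flow of $-f$ accumulates.

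Here is the argument I would try. The function $f$ is proper and bounded below (standard for shrinkers: $f\sim d^2/4$). The flow of $-\nabla f=-X$ decreases $f$, and every trajectory converges to $Z$ because $Z$ is compact and $f$ is proper. Consider now the flow of $X$, i.e.\ of $-\nabla f$ reversed, acting on $M'$. Choose a holomorphic function $h$ on $M'$; the embedding $h:M'\to\mathbb{C}^P$ gives enough of these. Pulled back to $M$, $u=h\circ p$ is holomorphic, and along the backward flow of $X$ the orbits converge into a neighbourhood of $Z$.

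I will then use the $\mathbb{C}^*$-type structure. The flow of $JX$ generates a compact torus $T$, namely the closure of its flow in $G^X_0$, which is compact by Lemma \ref{class}. Expand $u$ into $T$-weight components $u=\sum u_\alpha$. Each $u_\alpha$ satisfies $X\cdot u_\alpha=\lambda_\alpha u_\alpha$ with $\lambda_\alpha\in\mathbb{R}$. Along the flow $\phi_t$ of $X$ we get $u_\alpha(\phi_t(x))=e^{\lambda_\alpha t}u_\alpha(x)$. As $t\to-\infty$, $\phi_t(x)$ stays in a compact set, since it converges to $Z$. Hence any $u_\alpha$ with $\lambda_\alpha<0$ and $u_\alpha(x)\neq 0$ gives a contradiction, so all weights are nonnegative. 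Moreover, a function with $\lambda_\alpha=0$ is constant on orbit closures, which accumulate at $Z$.

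From this, holomorphic functions on $M'$ that vanish at $p(Z)$ must vanish at the limit points. This forces $p(Z)$ to be contained in the set of points that all orbits approach. Because the $\mathbb{C}$-action on the affine-like space $M'$ has positive weights on the maximal ideal of a fixed point, $p(Z)$ must be a single point $o$, and every orbit of $M'$ flows to $o$.

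For (ii): if $E\neq\emptyset$, the singular points of $M'$ are $p(E)$. A fixed point of a holomorphic $\mathbb{C}$-action contracting all of $M'$ that is a smooth point off $p(E)$ would make $M'$ biholomorphic to $\mathbb{C}^n$ near-globally. In that case no singular points exist away from it, yet $p(E)$ must itself be $X'$-fixed. Two distinct fixed points of a contracting action are impossible, so $o\in p(E)$ and therefore $Z\subseteq p^{-1}(o)\subseteq E$.

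For (i): if $E=\emptyset$, then $M=M'$ is Stein and smooth, and the action has a unique attracting fixed point $o$ with all weights on $\mathfrak{m}_o/\mathfrak{m}_o^2$ positive, because $\operatorname{Hess} f$ must be positive definite there. Linearising the action, e.g.\ via weighted homogeneous coordinates from weight vectors of $\mathfrak{m}_o$, shows that $M\cong\mathbb{C}^n$ and that $Z=\{o\}$.

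The main obstacle is the step showing that $Z$ maps to a single attracting point on $M'$. That is, I need to rule out zeros of $X$ off $E$ when $E\ne\emptyset$, and non-isolated zeros when $E=\emptyset$. I expect to handle it with the weight decomposition above, combined with compactness of $Z$ from \cite{JunshengSong}, which replaces the curvature hypothesis of \cite{cds}.
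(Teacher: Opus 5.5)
Your top-level plan matches the paper's proof. That proof consists only of the remark that, once $Z$ is known to be compact by \cite{JunshengSong}, the proof of \cite[Proposition 2.27]{cds} applies verbatim. The gap is in your reconstruction of that proof, at exactly the step you flag. The sentences you offer there (functions vanishing at $p(Z)$ vanishing at limit points, positive weights on the maximal ideal of a fixed point) do not show that $p(Z)$ is one point. Positivity of weights does not by itself exclude a second fixed point, and your remark on weight-zero functions stops short of proving them constant. The missing argument uses only what you have already set up:

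\emph{(a) Weight-zero parts are constant.} If $h_{0}$ is the weight-zero component of $h\in\mathcal{O}(M)$, then $JX\cdot h_{0}=0$, hence $X\cdot h_{0}=-i\,JX\cdot h_{0}=0$. The $\alpha$-limit set of every backward $X$-orbit is a nonempty subset of $Z$. This follows from $\int_{-\infty}^{0}|X|^{2}(\phi_{t}x)\,dt\leq f(x)-\inf_{M}f$ and properness of $f$, not from compactness of $Z$. So $h_{0}(M)=h_{0}(Z)$. Because $Z$ is a \emph{compact} analytic set, it has finitely many connected components, and $h_{0}$ is constant on each. Hence $h_{0}$ is constant. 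This is where compactness of $Z$ genuinely enters.

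\emph{(b) Nonzero weights vanish on $Z$.} If $u$ has weight $\lambda\neq0$ and $z\in Z$, then $\lambda u(z)=(X\cdot u)(z)=0$.

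Together these show that every holomorphic function on $M$ is constant on $Z$. Since $\mathcal{O}(M)=p^{*}\mathcal{O}(M')$ with $M'$ Stein, $p(Z)=\{o\}$, and every backward orbit in $M'$ converges to $o$.

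The end-game also needs repair.

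In (ii), the assertion that the singular points of $M'$ are $p(E)$ is false. For the Feldman--Ilmanen--Knopf shrinker on $\mathcal{O}(-1)\to\mathbb{P}^{n-1}$ one has $M'=\mathbb{C}^{n}$, which is smooth at $p(E)$. So the smooth-versus-singular reasoning cannot work, but it is also unnecessary. Each connected component $E_{j}$ of $E$ is closed and flow-invariant: biholomorphisms preserve $E$, and continuity in $t$ fixes each of its finitely many components. Hence the backward orbit of any $y\in E_{j}$ stays in $E_{j}$ and accumulates on $Z\cap E_{j}\neq\emptyset$. This gives $p(E)=\{o\}$, and hence $Z\subseteq p^{-1}(o)=E$.

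In (i), positive definiteness of $\operatorname{Hess}f$ at $o$ is not justified, since a minimum only gives semi-definiteness. Argue instead as follows:
\begin{enumerate}
\item Your backward-flow argument gives $\lambda\geq0$ for every weight, and (a) shows that weight $0$ forces constancy.
\item On the Stein manifold $M$, the weight components of global functions span the holomorphic cotangent space at $o$. So there are weight vectors $u_{1},\dots,u_{n}$ with weights $\lambda_{j}>0$ and independent differentials at $o$.
\item Every backward orbit converges to $o$. Hence the equivariant map $(u_{1},\dots,u_{n}):M\to\mathbb{C}^{n}$ is an injective local biholomorphism. Its image contains a neighbourhood of $0$ and is invariant under $\operatorname{diag}(e^{\lambda_{j}t})$, so it is a biholomorphism.
\end{enumerate}
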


\begin{proof}
Since the zero set of $X$ is compact by \cite[Proposition 3.15]{JunshengSong}, the proof of \cite[Proposition 2.27]{cds} gives the result.
\end{proof}

\section{Set-up of the complex Monge-Amp\`ere equation}\label{setupp}

In this section, $C_{0}$ will denote a K\"ahler cone with complex structure $J_{0}$ and 
$\pi:M\to C_{0}$ will be a resolution of $C_{0}$ with exceptional set $E$ and complex structure $J$, so that $\pi_{*}J=J_{0}$. 
Let $(M,\,g,\,X)$ be a shrinking gradient K\"ahler-Ricci soliton on $M$ with respect to $J$ with K\"ahler form $\omega$, with $X=\nabla f$ for a smooth real-valued function $f:M\to\mathbb{R}$
which we know is proper and bounded from below \cite{caoo}. Then $JX$ is real holomorphic and Killing \cite[Lemma 2.3.8]{fut2} on $M$.

In this section, we derive a complex Monge-Amp\`ere equation satisfied by the metric $g$. 

\subsection{Properties of $\pi:M\to C_{0}$}
We first collect together the following properties of $\pi:M\to C_{0}$. These properties may be deduced from \cite{JunshengSong}. However, we provide
a proof using the work of \cite{cds} when possible.
\begin{prop}[Properties of resolutions admitting shrinkers]\label{jenny}
\begin{enumerate}[label=\textnormal{(\roman{*})}, ref=(\roman{*})]
\item $M$ is quasi-projective.
\item $M$ is simply connected.
\item $\pi:M\to C_{0}$ is $-K_{M}$-ample.
\item There exists a torus $T$  acting holomorphically on $C_{0}$ with fixed point set the apex of $C_{0}$ such that $d\pi(JX)$ lies in the Lie algebra $\mathfrak{t}$ of $T$ and such that
the action of $T$ extends to a holomorphic isometric action on $(M,\,J,\,g)$ in such a way that makes $\pi:M\to C_{0}$ a $T$-equivariant resolution.
\item The vector field $d\pi(JX)$ on $C_{0}$ is the Reeb vector field of some $T$-invariant
K\"ahler cone metric $g_{0}$ on $C_{0}$, i.e., there exists a $T$-invariant K\"ahler cone metric $g_{0}$ on $C_{0}$ with radial function $r$ such that $r\partial_{r}=d\pi(X)$.
\end{enumerate}
\end{prop}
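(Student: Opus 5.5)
The plan is to reduce everything to the two facts about the soliton that hold with no curvature assumption: the zero set of $X$ is compact \cite[Proposition 3.5]{JunshengSong}, and $G_{0}^{X}$ is compact (Lemma \ref{class}). We then feed these into the arguments of \cite{cds}, which needed bounded (Ricci or scalar) curvature only to secure these two facts. Since $\pi:M\to C_{0}$ is a resolution of a K\"ahler cone, $M$ is $1$-convex: pull back $r^{2}$ via $\pi$, which is plurisubharmonic and strictly so off the compact set $E$. Moreover, the Remmert reduction of $M$ is exactly $\pi:M\to C_{0}$, with $C_{0}$ embedded in $\mathbb{C}^{N}$ as in Theorem \ref{t:affine}. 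Proposition \ref{sexxy} then gives that the zero set of $X$ lies in $E$, or that $M\cong\mathbb{C}^{n}$ if $E=\emptyset$.

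\textbf{Items (iv) and (v).} Take $T$ to be the closure of the flow of $JX$ inside $G_{0}^{X}$. This closure is a compact torus by Lemma \ref{class}, since $JX$ is Killing and holomorphic. It acts holomorphically and isometrically on $(M,J,g)$ and preserves $E$, the maximal compact analytic subset. Holomorphic functions on $M$ are exactly the pullbacks of those on $C_{0}$ (normality plus Hartogs across the apex), so the $T$-action descends to $C_{0}$ and $\pi$ is $T$-equivariant.

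To see that the apex is the only fixed point downstairs, I would follow \cite[Proposition 2.27/2.28]{cds}. Along the flow of $-X$, every point of $M$ converges to the zero set of $X$, which lies in $E$. Hence the flow of $-d\pi(X)$ contracts $C_{0}$ to the apex. On $T$-weight spaces of the coordinate ring, this forces $d\pi(JX)$ to act with strictly positive weights, i.e.\ $d\pi(JX)$ lies in the Reeb cone of $\mathfrak{t}$. The standard result (e.g.\ \cite{MSY}, \cite{vanC4}, or \cite[Section 2]{cds}) then produces a $T$-invariant K\"ahler cone metric $g_{0}$ with Reeb field $d\pi(JX)$, and hence $r\partial_{r}=-J_{0}\,d\pi(JX)=d\pi(X)$. \textbf{This is the step I expect to be the main obstacle}: showing positivity of the weights without curvature bounds. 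I would use the compactness of $\{X=0\}$ together with the properness of $f$, which guarantees that flow lines of $-X$ converge.

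\textbf{Item (iii).} The Ricci form satisfies $\rho_{\omega}=\omega-i\partial\bar{\partial}f$. Averaging over $T$ and using that $f$ is $T$-invariant, the hermitian metric $e^{f}\omega^{n}$ on $-K_{M}$ has curvature $\omega>0$. Thus $-K_{M}$ carries a positively curved metric and is $\pi$-ample, as in \cite[Section 4.1]{cds}.

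\textbf{Items (i) and (ii).} With $-K_{M}$ $\pi$-ample and $C_{0}$ affine, $M$ is quasi-projective: a relatively ample line bundle over an affine base embeds $M$ into projective space over $C_{0}$. For simple connectivity, shrinking solitons have finite fundamental group \cite{caoo}-type results; alternatively, $M$ deformation retracts along the flow of $-X$ onto a neighbourhood of $E$, and $E$ is simply connected because a $\pi$-anticanonically ample resolution is rationally connected along fibres (Koll\'ar/Takayama). Hence $\pi_{1}(M)\cong\pi_{1}(E)=1$.
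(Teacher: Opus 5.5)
Your items (iv) and (v) follow the paper. $T$ is the closure of the flow of $JX$ in the compact group $G_{0}^{X}$, and its fixed points lie in $E$ by Proposition \ref{sexxy}. Positivity of the weights comes from backward flow lines of $X$ accumulating at zeros of $X$ \cite[Proposition 2.28]{cds}, together with the maximum principle. The metric $g_{0}$ then comes from \cite[Proposition 2.7]{collinss} and \cite[Lemma 2.2]{frank}, which are the right references here. Your uniform treatment of $E=\emptyset$ is fine.

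The genuine gap is in how you obtain (iii) and (i). You get $\pi$-ampleness of $-K_{M}$ from the positively curved metric ``as in \cite[Section 4.1]{cds}'', and then deduce quasi-projectivity from it. That argument reads $\int_{V}(i\Theta)^{\dim V}>0$ as an algebraic intersection number and applies Nakai's criterion for a mapping \cite[Corollary 1.7.9]{lazarfeld}. The paper can do this only because it has already obtained (i) independently from \cite[Proposition 2.25]{cds}; in your order the argument is circular.

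If you instead use Grauert's analytic criterion, you only get relative ampleness in the analytic sense. Then ``relatively ample over an affine base'' yields only a closed \emph{analytic} embedding $M\hookrightarrow\mathbb{P}^{N}\times C_{0}\subset\mathbb{P}^{N}\times\mathbb{C}^{P}$. This need not be algebraic, since Chow's theorem requires compactness. So an algebraization input is missing. There are three ways to supply it:
\begin{enumerate}
\item Take (i) first from \cite[Proposition 2.25]{cds} (or \cite{JunshengSong}), which your (iv)--(v) makes available, and then run Nakai.
\item Show directly that $\bigoplus_{k}\pi_{*}\mathcal{O}_{M}(-kK_{M})$ is algebraic; its graded pieces differ from the reflexive sheaves $\mathcal{O}_{C_{0}}(-kK_{C_{0}})$ only by finite-length data at the apex.
\item State from the outset that $\pi$ is an algebraic proper morphism. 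Then relative Nakai--Moishezon for proper morphisms gives (iii) without (i).
\end{enumerate}
Also, the positively curved metric on $-K_{M}$ is $e^{-f}h$, i.e.\ $e^{-f}\omega^{n}$, not $e^{f}\omega^{n}$, and no averaging is needed.

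Item (ii) is also not established as written. Finiteness of $\pi_{1}$ for shrinkers (Wylie's theorem, not \cite{caoo}) says nothing about triviality. Your second route correctly reduces to $\pi_{1}(E)$ by retracting $M$ along $-X$. However, ``rationally connected along fibres'' does not give $\pi_{1}(E)=1$. $E$ may be singular and reducible, and rationally chain connected varieties need not be simply connected; a nodal rational curve is an example. What you need is a Takayama-type theorem that fibres of a relatively (weak) Fano morphism from a klt (here smooth) variety are simply connected. That is an algebraic statement requiring $-K_{M}$ to be $\pi$-ample in the algebraic sense, so this route inherits the gap above. The paper avoids this by citing \cite{Esp-sc, JunshengSong} for (ii).
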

In the terminology of \cite{JunshengSong}, it follows that the pair $(T\curvearrowright\pi:M\to C_{0},\,d\pi(JX))$ defines a ``polarised Fano fibration''.

\begin{proof}[Proof of Proposition \ref{jenny}]
In what follows, we identify $M\setminus E$ with the complement of the apex of $C_{0}$ via $\pi$.
\begin{enumerate}[label=\textnormal{(\roman{*})}, ref=(\roman{*})]
\item This is the statement of \cite[Proposition 2.25]{cds}.
\item This follows from \cite{Esp-sc, JunshengSong}.
\item We begin with the following claim.
\begin{claim}\label{tosatti}
There exists a hermitian metric on $-K_{M}$ with curvature form $\Theta$ such that $i\Theta>0$.
\end{claim}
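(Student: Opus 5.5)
The natural candidate is the metric induced on $-K_{M}$ by the soliton itself. We work with the weighted volume form $e^{-f}\omega^{n}$ rather than $\omega^{n}$. Recall that a volume form $\nu$ on $M$ defines a hermitian metric $h_{\nu}$ on $-K_{M}=\Lambda^{n}T^{1,0}M$, namely $|\sigma|^{2}_{h_{\nu}}=i^{n^{2}}\sigma\wedge\bar\sigma/\nu$ for local holomorphic $n$-forms $\sigma$ (dually on $-K_M$). Its Chern curvature satisfies $i\Theta_{h_{\nu}}=-i\partial\bar\partial\log\nu$ in local holomorphic coordinates, suitably normalised. For $\nu=\omega^{n}$ this gives $i\Theta=\rho_{\omega}$.

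The plan is to take $\nu=e^{-f}\omega^{n}$. Then
\begin{equation*}
i\Theta_{h_{\nu}}=\rho_{\omega}+i\partial\bar\partial f=\omega>0
\end{equation*}
by the soliton equation \eqref{krseqn} with $\lambda=1$. This gives the claim immediately, with a hermitian metric $h:=h_{e^{-f}\omega^{n}}$ on $-K_{M}$ whose curvature form is, up to the normalisation of the factor $i$ and $2\pi$, exactly the soliton K\"ahler form.

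The steps in order are as follows.
\begin{enumerate}
\item Fix the sign convention relating the Ricci form $\rho_{\omega}$ to the curvature of the metric on $-K_{M}$ induced by $\omega^{n}$. Check that this is $i\Theta_{\omega^{n}}=\rho_{\omega}$, which uses the local formula $\rho_{\omega}=-i\partial\bar\partial\log\det(g_{j\bar k})$.
\item Observe that multiplying the hermitian metric by the positive smooth function $e^{f}$ changes the curvature by $i\partial\bar\partial f$. Since $f$ is smooth on all of $M$, this is a global modification.
\item Invoke \eqref{krseqn}.
\end{enumerate}
No completeness or curvature assumption is needed here, because the construction is pointwise and global.

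The only real obstacle is bookkeeping of signs and the direction of the weight. The weight must be $e^{-f}$ on the volume form, equivalently $e^{f}$ on $|\cdot|^{2}$ of sections of $-K_{M}$, so that the $i\partial\bar\partial f$ term enters with the plus sign. With the opposite choice the curvature would be $\rho_{\omega}-i\partial\bar\partial f$, which is not controlled.

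After the claim is established, item (iii) should follow by restricting $h$ to the fibres of $\pi$. Positivity of $-K_{M}$ along the exceptional set $E$, where $\pi$ is not an isomorphism, gives relative ampleness, since $E$ is compact and $\pi$ is proper. That step is beyond the claim itself.
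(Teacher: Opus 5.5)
Your argument is correct and is the same as the paper's: the paper takes the metric $h$ on $-K_{M}$ induced by $\omega$ and passes to $e^{-f}h$, which is exactly your $h_{\nu}$ for $\nu=e^{-f}\omega^{n}$, so that $i\Theta=\rho_{\omega}+i\partial\bar{\partial}f=\omega>0$. One bookkeeping slip: since $|\cdot|^{2}_{h_{\nu}}$ on $-K_{M}$ scales like $\nu$, the weight $e^{-f}$ on the volume form is the weight $e^{-f}$ (not $e^{f}$) on $|\cdot|^{2}$ for sections of $-K_{M}$ (it is $e^{f}$ on $K_{M}$); multiplying the metric on $-K_{M}$ by $e^{f}$, as your step 2 says, would give $\rho_{\omega}-i\partial\bar{\partial}f$, so step 2 is only correct when read as a statement about the metric on $K_{M}$.
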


\begin{proof}[Proof of Claim \ref{tosatti}]
Recall that the K\"ahler form $\omega$ of the shrinking gradient K\"ahler-Ricci soliton on $M$ satisfies $\rho_{\omega}+i\partial\bar{\partial}f=\omega$.
Let $h$ denote the hermitian metric induced on $-K_{M}$ by $\omega$. Then this equation is equivalent to saying that $i$ times the curvature of the hermitian metric $e^{-f}h$ on $-K_{M}$
is precisely $\omega$. The claim now follows.
\end{proof}

We now proceed as in \cite[p.3056]{cds} with $-K_{M}$ in place of $K_{M}$.
To this end, let $\Theta$ be as in Claim \ref{tosatti}. Then the strictly positivity implies that
\begin{equation*}
\int_{V}(i\Theta)^{k}\wedge\omega^{\dim_{\mathbb{C}}V-k}>0
\end{equation*}
for all positive-dimensional irreducible algebraic subvarieties $V\subset E$ and for all integers $k$ such that $1\leq k\leq \dim_{\C}V$. Setting $k=\dim_{\mathbb{C}}V$, we then see that
\begin{equation*}
\int_{V}(i\Theta)^{\dim_{\mathbb{C}}V}>0\quad\textrm{for every irreducible algebraic subvariety $V\subset E$ of positive dimension.}
\end{equation*}
But since $M$ is quasi-projective by part (i), this is the same as saying that
\begin{equation*}
(D^{\dim_{\mathbb{C}}V}\cdot V)>0\quad\textrm{for every irreducible algebraic subvariety $V\subset E$ of positive dimension,}
\end{equation*}
where $D$ is now an anti-canonical divisor of $M$. Nakai's criterion for a mapping \cite[Corollary 1.7.9]{lazarfeld} now tells us that $-K_{M}$ is $\pi$-ample, as claimed.

\item Let $G_{0}^{X}$ denote the connected component of the identity of the Lie group of holomorphic isometries of $(M,\,J,\,g)$ that commute with the flow of $X$, a compact Lie group by
Lemma \ref{class}. Consider the flow of the real holomorphic Killing vector field $JX$ in $G_{0}^{X}$. As a compact Lie group, the closure of the flow of $JX$ in $G_{0}^{X}$ (with respect to the compact open topology) generates the effective holomorphic isometric action of a real torus $T \subset G_{0}^{X}$ on $(M,\,J,\,g)$ with Lie algebra $\mathfrak{t}$ containing $JX$
that preserves $E$ if it is non-empty by \cite[Lemma 2.6]{cds}. Since $M$ is $1$-convex by \cite[Lemma 2.15]{Conlon}, Proposition \ref{sexxy} tells us that the zero set of $X$, and correspondingly the fixed point set of $T$, comprises a single point if $E=\emptyset$ (in which case $M$ is biholomorphic to $\mathbb{C}^{n}$), and is contained in $E$ otherwise.
%Furthermore, \cite[Proposition 2.27]{cds} implies that each forward orbit of the
%negative gradient flow of $f$ converges to a point in this fixed point set.
By contracting $E$ if is non-empty, we see that the action of $T$ on $M$
induces an action of $T$ on $C_{0}$ with fixed point set the apex,
in such a way that the resolution $\pi:M\to C_{0}$ is equivariant with respect to $T$.

\item If $E=\emptyset$, then by Proposition \ref{sexxy}(i), $M$ is biholomorphic to $\mathbb{C}^{n}$ and the zero set of $X$ comprises a single point $p$. 
As the soliton potential $\tilde{f}$ is proper and bounded from below, it must achieve its minimum value at $p$.
By \cite[Proposition 6]{Bry-Kah-Sol}, one can linearise $X$ at $p$, i.e., there exist holomorphic coordinates $z=(z_{1},\ldots,z_{n})$ with $z(p)=0$ defined in a neighbourhood of $p$
with respect to which $\frac{1}{2}(X-iJX)=\sum_{k\,=\,1}^{n}\lambda_{k}z_{k}\partial_{z_{k}}$ for $\lambda_{k}\in\mathbb{R}$. Since $p$ is a minimum for $\tilde{f}$ and $X$ is the gradient of $\tilde{f}$, we see that necessarily $\lambda_{i}>0$. By flowing along $X$ and $\sum_{k\,=\,1}^{n}\lambda_{k}z_{k}\partial_{z_{k}}$, using \cite[Proposition 2.27]{cds}
and the fact that $X$ is complete \cite{zhang12}, we may globally linearise $X$, i.e., we can extend the holomorphic coordinates to the whole of $M$ so that $\frac{1}{2}(X-iJX)=\sum_{k\,=\,1}^{n}\lambda_{k}z_{k}\partial_{z_{k}}$ globally.
In these coordinates, it is clear that $JX$ defines a Reeb field. Define a function
$r:\mathbb{C}^{n} \to \mathbb{R}$ by setting $r=1$ on the Euclidean unit sphere in $\mathbb{C}^{n}$ and extending $r$ to have degree $1$ under the flow of $X$.
As shown in \cite{frank}, the form $\frac{i}{2}\partial\bar{\partial}r^{2}$ defines a K\"ahler cone metric on $\mathbb{C}^{n}$ with Reeb vector field $JX$.
Since the flow of $JX$ preserves the unit Euclidean sphere, it is clear that this cone metric is invariant under the action of $T$.

If $E\neq\emptyset$, then we know that the cone $C_{0}$ is an affine algebraic variety by Theorem \ref{t:affine}. From part (i), we also know that $M$ is quasi-projective.
Let $\mathcal{O}_{C_{0}}(C_{0})$ (respectively $\mathcal{O}_{M}(M)$) denote the global sections of the structure sheaf of $C_{0}$ (resp.~of $M$) and write $$\mathcal{O}_{C_{0}}(C_{0})=
\bigoplus_{\alpha\,\in\,\mathfrak{t}^{*}}\mathcal{H}_{\alpha}$$
for the weight decomposition under the action of $T$. Since $M$ is $1$-convex, by construction, $\pi:M\to C_{0}$ will be the Remmert reduction
of $M$. In particular, we have that $\pi^{*}\mathcal{O}_{C_{0}}(C_{0})=\mathcal{O}_{M}(M)$ by the properties of the Remmert reduction.
Since $\pi:M\to C_{0}$ is equivariant with respect to the action of $T$, we thus see that
$Y\in\mathfrak{t}$ acts with weight $\lambda$ on $h\in\mathcal{O}_{C_{0}}(C_{0})$ if and only if
it acts with weight $\lambda$ on the unique lift $\pi^{*}h$ of $h$ to $\mathcal{O}_{M}(M)$.

We now demonstrate that:
\begin{claim}\label{loveit}
$\alpha(JX)>0$ for all $\alpha\in\mathfrak{t}^{*}$ such that $\mathcal{H}_{\alpha}\neq\emptyset$ and $\alpha\neq0$.
\end{claim}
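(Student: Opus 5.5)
Let $h\in\mathcal{H}_{\alpha}$ be nonzero with $\alpha\neq0$, and set $u:=\pi^{*}h\in\mathcal{O}_{M}(M)$. The plan is to show that $\alpha(JX)$ is real and nonnegative by a growth and monotonicity argument along the flow of $X$. We then rule out the value zero using Lemma \ref{nice}-type rigidity or the fact that $\alpha\neq0$.

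\textbf{Setting up the weights.} Since $JX\in\mathfrak{t}$ and $T$ acts by holomorphic automorphisms, $\mathcal{L}_{JX}u=i\alpha(JX)u$, with the convention that weights are real-valued on $\mathfrak{t}$. Because $u$ is holomorphic and $X=-J(JX)$, the holomorphic vector field $X^{1,0}=\frac12(X-iJX)$ satisfies $X^{1,0}u=X u$. Also $JX\cdot u=i\,X\cdot u$ for holomorphic $u$, up to a sign fixed by convention. Hence
\[
\mathcal{L}_{X}u=c\,\alpha(JX)\,u
\]
for a fixed universal constant $c\in\{\pm1\}$. Along the flow $\Phi_{t}$ of $X$, which is complete by \cite{zhang12}, we therefore get
\[
u(\Phi_{t}(x))=e^{c\alpha(JX)t}u(x).
\]

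\textbf{Sign via the backward flow.} Run $t\to-\infty$. By \cite[Proposition 2.27]{cds}, valid here since the zero set of $X$ is compact by \cite{JunshengSong}, together with Proposition \ref{sexxy}, every backward orbit of $X$ (the negative gradient flow of $f$) converges to the compact zero set of $X$, which lies in $E$. Since $u$ is continuous, $u(\Phi_{t}(x))$ stays bounded as $t\to-\infty$. Pick $x$ with $u(x)\neq0$; this is possible since $h\neq0$. Boundedness forces $c\,\alpha(JX)\ge0$. Fixing the convention so that holomorphic functions vanishing at the apex have positive weight gives $\alpha(JX)\geq0$. In particular $u$ is constant along backward limits, and the limit is $0$ whenever $\alpha(JX)>0$.

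\textbf{Excluding $\alpha(JX)=0$.} This is the main step. If $\alpha(JX)=0$, then $u$ is invariant under the flows of both $X$ and $JX$. Since $\alpha$ is continuous and $\mathfrak{t}$ is the Lie algebra of the closure of the flow of $JX$, the group generated by $JX$ is dense in $T$. Invariance under this dense subgroup gives invariance under all of $T$, so $\alpha\equiv0$ on $\mathfrak{t}$. That contradicts $\alpha\neq0$.

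So the density of the one-parameter subgroup generated by $JX$ in $T$ does the work, and the real content is the sign in the previous step. For that step I would double-check the convention relating $\mathcal{L}_{JX}$ and $\mathcal{L}_{X}$ on holomorphic functions, and check that the backward limit exists. If the reliance on \cite[Proposition 2.27]{cds} is problematic, a fallback is available: the backward flow stays in the compact sublevel set $\{f\le f(x)\}$, since $f$ is proper and decreasing along the backward flow. That alone gives boundedness of $u(\Phi_{t}(x))$ as $t\to-\infty$.
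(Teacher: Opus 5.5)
Your proof is correct, but it excludes the case $\alpha(JX)=0$ by a different mechanism than the paper.

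The nonnegativity step matches the paper. With your convention $\mathcal{L}_{Y}u=i\alpha(Y)u$, the identity $du\circ J=i\,du$ for holomorphic $u$ forces $X\cdot u=\alpha(JX)\,u$. So $c=+1$, and no convention needs to be ``fixed''; phrasing it as choosing signs so that functions vanishing at the apex have positive weight sounds circular. Boundedness of $u$ along the backward flow then rules out negative weights. Your fallback, that the backward orbit stays in the compact set $\{f\leq f(x)\}$, is cleaner than the paper's appeal to \cite[Proposition 2.28]{cds} and Proposition \ref{sexxy}, and it does not use $E$ at all.

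For $\alpha(JX)=0$, the paper argues analytically. Then $h$ is constant along the flow lines of $X$, and each backward flow line accumulates at a zero of $X$ lying in $E$. By the maximum principle, $h$ is constant on each connected component of the compact analytic set $E$, so $h$ takes finitely many values and is therefore constant.

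You instead use that $T$ is, by its construction in Proposition \ref{jenny}(iv), the closure of the one-parameter subgroup generated by $JX$. The character $\chi_{\alpha}$ of $T$ is then trivial on a dense subgroup, hence trivial, so $\alpha=0$.

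Your route is shorter and purely Lie-theoretic, but it depends on this particular choice of $T$. The paper's argument proves more: every non-constant holomorphic eigenfunction of $X$ has strictly positive eigenvalue. This gives $\mathcal{H}_{0}=\mathbb{C}$. It also applies verbatim to any torus whose Lie algebra contains $JX$, such as a maximal torus as in Corollary \ref{classify3}, where the flow of $JX$ need not be dense.
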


\noindent This means that $JX$ acts with 
positive weights on non-constant holomorphic functions on $C_{0}$. This will be true if and only if $JX$ acts with positive weights on 
non-constant holomorphic functions on $M$. We prove this latter statement using an argument verbatim from the proof of \cite[Theorem A.10]{cds}.

\begin{proof}[Proof of Claim \ref{loveit}]
Let $h$ be a non-constant holomorphic function on $M$ on which $JX$ acts with weight $\lambda$
so that $X(h)=\lambda h$, and let $x\in M\setminus E$ be a point where $h(x)\neq0$.
Denote by $\gamma_{x}(t)$ the flow line of $X$ with $\gamma_{x}(0)=x$. Then 
$$\frac{d}{dt}h(\gamma_{x}(t))=\lambda h(\gamma_{x}(t))$$
so that
\begin{equation}\label{star}
h(\gamma_{x}(t))=h(x)e^{-\lambda t}\quad\textrm{for all $t<0$}.
\end{equation}
Now, we know that $X=\nabla f$ and $f$ is proper and bounded from below, so from
\cite[Proposition 2.28]{cds} we see that there is a sequence $t_{i}\to-\infty$ as $i\to+\infty$ such that $(\gamma_{x}(t_i))_i$ converges to a point $x_{\infty}\in M$ satisfying $X(x_{\infty})=0$. Since the zero set of $X$ is contained in $E$ by Proposition \ref{sexxy}, we must have that $x_{\infty}\in E$. Let $x_{i}:=\gamma_{x}(t_{i})$. Then plugging $t_{i}$ into \eqref{star} yields the fact that
$$|h(x_{i})|=|h(x)|e^{-\lambda t_{i}}\to_{i\to\infty}\left\{
\begin{array}{rl}
+\infty & \text{if $\lambda < 0$},\\
0 & \text{if $\lambda > 0$.}
\end{array} \right.$$
Since $x_{i}\to x_{\infty}\in E$ as $i\to\infty$, we conclude from the maximum principle that $\lambda>0$ as required.
\end{proof}

We have just established that $JX$ is an ``algebraic'' Reeb vector field on $C_{0}$ in the sense of \cite[Definition 2.4]{collinss}.
By \cite[Proposition 2.7]{collinss}, this coincides with the definition of Reeb field in the sense of \cite[Definition 2.6]{frank}. In particular, 
by \cite[Lemma 2.2]{frank} there exists a $T$-invariant K\"ahler cone metric $g_{0}$ with radial function $r$ such that $r\partial_{r}=X$.
\end{enumerate}
\end{proof}

\subsection{Set-up of the complex Monge-Amp\`ere equation}

Recall the $T$-invariant K\"ahler cone metric $g_{0}$ on $C_{0}$ from Proposition \ref{jenny}(iv). This encodes the model geometry at infinity. Let $\omega_{0}$ 
denote the K\"ahler form of $g_{0}$. We now construct a background metric $g$ on $M$ that is asymptotic to $g_{0}$.
\begin{prop}\label{mainprop}
\begin{enumerate}
\item  There exists a complete $T$-invariant K\"ahler metric $g$ on $M$ with K\"ahler form $\omega$ such that outside a compact subset of $M$ containing $E$,
$\omega=\pi^{*}(\omega_{0}+\rho_{\omega_{0}})$. In particular, $\pi_{*}\omega-\omega_{0}=O(r^{-2})$ with $g_{0}$-derivatives.
In addition, there exists a smooth real-valued torus-invariant function $F=c_{0}-\frac{s_{\omega_{0}}}{2}+O(r^{-4})$ with $g_{0}$-derivatives such that
\begin{equation}\label{ivin}
i\partial\bar{\partial}F=\rho_{\omega}+\frac{1}{2}\mathcal{L}_{X}\omega-\omega.
\end{equation}
Here, $c_{0}\in\mathbb{R}$, $\rho_{\omega_{0}}$ (respectively $\rho_{\omega}$) denotes the Ricci form of $\omega_{0}$ (resp.~$\omega$), and $s_{\omega_{0}}$ denotes the scalar curvature of $\omega_{0}$.

\item There exists a unique $T$-invariant real-valued function $f\in C^{\infty}(M)$ with
$-\omega\lrcorner JX=df$ such that outside a compact subset of $M$ containing $E$,
$f=\pi^{*}\left(\frac{r^{2}}{2}-n\right)$ and
\begin{equation}\label{normal12}
\Delta_{\omega}f+f-\frac{X}{2}\cdot f=-\frac{X}{2}\cdot F.
\end{equation}
In particular,
\begin{equation}\label{normal2}
\Delta_{\omega}f+f-\frac{X}{2}\cdot f=O(r^{-2})\quad\textrm{with $g_{0}$-derivatives,}
\end{equation}
and $f\to+\infty$ as $r\to+\infty$, hence is proper. Moreover,
\begin{equation}\label{bds-cov-der-f}
|X\cdot f-2f|\leq C\qquad\textrm{and}\qquad|\nabla^{g,\,k}(\nabla^{g,\,2}f-g)|_{g}\leq \frac{C_{k}}{(f+C)^{1+\frac{k}{2}}}\qquad\textrm{for all $k\geq 0$}.
\end{equation}
  \item Recall that $\tilde{\omega}$ denotes the K\"ahler form of the complete shrinking gradient K\"ahler-Ricci soliton $(M,\,\tilde{g},\,X)$. 
  There exists a $T$-invariant smooth function $\varphi\in C^{\infty}(M)$ such that  
  $\tilde{\omega}=\omega+i\partial\bar{\partial}\varphi>0$ satisfying the complex Monge-Amp\`ere equation
\begin{equation}\label{cmaa}
(\omega+i\partial\bar{\partial}\varphi)^{n}=e^{F+\frac{X}{2}\cdot\varphi-\varphi}\omega^{n},
\end{equation}
where $F$ is as in part (i). 
\end{enumerate}
\end{prop}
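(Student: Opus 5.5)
Part (i) is a cut-off construction. Since the Reeb field of $g_{0}$ is $d\pi(JX)$ with $r\partial_r=d\pi(X)$, the $T$-invariant form $\rho_{\omega_{0}}$ is invariant under $\nu_{t}$, so $\rho_{\omega_0}=O(r^{-2})$ with $g_0$-derivatives by Lemma \ref{simple321}. Hence $\omega_0+\rho_{\omega_0}$ is positive for $r\ge R$. I would write $\rho_{\omega_0}=i\partial\bar\partial\psi$ on the cone end, with $\psi$ $T$-invariant and homogeneous of degree $0$. One choice is $\psi=-\tfrac12\log(\omega_0^n/\Omega\wedge\bar\Omega)$ for a $T$-equivariant holomorphic volume form; alternatively, solve using the transverse K\"ahler structure and Lemma \ref{nice}. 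Since $-K_M$ is $\pi$-ample (Proposition \ref{jenny}(iii)), and by Claim \ref{tosatti}, $M$ carries a $T$-invariant K\"ahler form $\omega_{M}$ (average over $T$). I would glue $\pi^*\omega_0=\tfrac{i}{2}\partial\bar\partial r^2$ to $\omega_M$ with a regularised maximum of potentials, or add $C\,\omega_M$ plus $i\partial\bar\partial(\chi\,\psi)$ with $\chi$ a cut-off. This gives a $T$-invariant K\"ahler $\omega$ equal to $\pi^*(\omega_0+\rho_{\omega_0})$ for $r\ge 2R$, and completeness is clear.

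For $F$: on the end, $\rho_\omega=\rho_{\omega_0}-i\partial\bar\partial\log(\omega^n/\omega_0^n)$. Expanding $(\omega_0+\rho_{\omega_0})^n/\omega_0^n=1+\tfrac{s_{\omega_0}}{2}+O(r^{-4})$, the logarithm is $\tfrac{s_{\omega_0}}2+O(r^{-4})$. Also $\tfrac12\mathcal L_X\omega_0=\omega_0$ and $\mathcal L_X\rho_{\omega_0}=0$ by degree-$0$ homogeneity. Hence $\rho_\omega+\tfrac12\mathcal L_X\omega-\omega=-i\partial\bar\partial(\tfrac{s_{\omega_0}}2+O(r^{-4}))$ on the end. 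Globally, the left side of \eqref{ivin} is a $T$-invariant exact real $(1,1)$-form: $\mathcal L_X\omega=d(\iota_X\omega)$, $\rho_\omega-\omega$ is a difference of curvature forms of $-K_M$, and $H^1(M)=0$ by simple connectivity (Proposition \ref{jenny}(ii)). On the Stein-like structure with $\pi$ the Remmert reduction, I would use the $\partial\bar\partial$-lemma for $1$-convex manifolds, i.e.\ $H^{1}(M,\mathcal O)=0$ from the $\pi$-ampleness of $-K_M$ via Kawamata--Viehweg / Grauert--Riemenschneider. This produces a global potential $\tilde F$. Then $\tilde F+\tfrac{s_{\omega_0}}2+O(r^{-4})$ is pluriharmonic and $T$-invariant on the end, hence constant by Lemma \ref{nice}; this defines $c_0$. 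Averaging over $T$ gives invariance.

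Part (ii): on the end $\omega=\tfrac i2\partial\bar\partial r^2+i\partial\bar\partial\psi$, and $-\iota_{JX}\omega_0=d(r^2/2)$, while $\iota_{JX}\rho_{\omega_0}=\iota_{JX}i\partial\bar\partial\psi=-d(JX\psi)/\ldots=0$-type since $\psi$ is $X$- and $JX$-invariant. Thus a Hamiltonian equal to $r^2/2+\text{const}$ on the end exists. Globally, $\iota_{JX}\omega$ is closed because $JX$ is Killing holomorphic for the $T$-invariant $\omega$, and exact by $H^1=0$. I would fix the constant via \eqref{normal12}. The standard identity $\Delta_\omega f+f-\tfrac X2 f+\tfrac X2F=\text{const}$ follows by contracting \eqref{ivin} with $X$ (trace and $d$ of the contraction vanish), so the normalisation makes the constant zero. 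On the end this forces $f=\pi^*(r^2/2-n)$ up to checking the constant from $\Delta_{\omega_0}r^2/2=n$ plus $O(r^{-2})$. The estimates \eqref{bds-cov-der-f} then follow from $Xf=|X|^2=r^2+O(1)$ and $\nabla^2 f-g=-\tfrac12\mathcal L_X(\pi^*\rho_{\omega_0})$-type terms of homogeneity $r^{-2-k}$.

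Part (iii): $\tilde\omega$ and $\omega$ are both $T$-invariant. Both $\rho_{\tilde\omega}-\tilde\omega$ and $\rho_\omega-\omega$ are curvatures of $-K_M$. Using $\tfrac12\mathcal L_X\tilde\omega=i\partial\bar\partial \tilde f$-type identities and the same $\partial\bar\partial$-lemma, I would write $\tilde\omega-\omega=i\partial\bar\partial\varphi$ with $\varphi$ $T$-invariant; this requires the class of $\tilde\omega-\omega$ to vanish, which holds because both equal $\rho-\tfrac12\mathcal L_X$ modulo exact terms. Subtracting the soliton equation from \eqref{ivin} gives
\[
i\partial\bar\partial\Big(\log\tfrac{\tilde\omega^n}{\omega^n}-F-\tfrac X2\varphi+\varphi\Big)=0,
\]
using $\mathcal L_X i\partial\bar\partial\varphi=i\partial\bar\partial(X\varphi)$ for $T$-invariant $\varphi$. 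The bracket is a global $T$-invariant pluriharmonic function, hence constant by Lemma \ref{nice} on the end (and so everywhere by unique continuation), which is absorbed into $\varphi$. I expect the main obstacle to be the global $\partial\bar\partial$-lemma on $M$, which requires the cohomological vanishing justified above.
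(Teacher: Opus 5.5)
\textbf{Verdict: there is a genuine gap, in the construction of the background metric $\omega$ in part (i).} It also undermines the exactness claims you make later for $F$ and $\varphi$.

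\textbf{The gap.} You assume that $\rho_{\omega_0}=i\partial\bar\partial\psi$ on the end for a globally defined $T$-invariant $\psi$, obtained either from a holomorphic volume form $\Omega$ on $C_0\setminus\{o\}$ or from a transverse solution. You then glue $\pi^*\omega_0$ to a $T$-invariant K\"ahler form $\omega_M$ (or $C\omega_M$) by cutting off potentials. Both steps require the de Rham class of $\rho_{\omega_0}$ in $H^2$ of the link $S$ to vanish. That class is $2\pi c_1$ of the regular part of $C_0$, so it vanishes only when this $c_1$ is torsion. This is a $\mathbb{Q}$-Gorenstein-type condition that the hypotheses do not force.

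\textbf{A counterexample.} Take $M=\mathrm{Tot}(\mathcal{O}(-1,-1)\to\mathbb{P}^2\times\mathbb{P}^1)$, contracting onto the Segre cone. Here $-K_M|_E=\mathcal{O}(2,1)$ is ample, and this is the kind of bundle on which Dancer--Wang construct shrinkers. The Gysin sequence gives $H^2(S;\R)\cong\R^2/\R(1,1)$. In this group $[\rho_{\omega_0}]$ is the image of $c_1(\mathbb{P}^2\times\mathbb{P}^1)=(3,2)$, which is nonzero. Consequently:
\begin{itemize}
\item no power of $\Omega$ exists;
\item no $\psi$ exists, transverse or otherwise;
\item any $\omega$ equal to $\pi^*(\omega_0+\rho_{\omega_0})$ at infinity has nonzero class on the end, so it cannot be produced by gluing potentials.
\end{itemize}
This holds even if $\omega_M$ is the $T$-average of $i\Theta$, as your citation of Claim~\ref{tosatti} suggests. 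In addition, your assertions that $\rho_\omega-\omega$ and $\tilde\omega-\omega$ are $d$-exact require $[\omega]=2\pi c_1(-K_M)$ on all of $M$. A form such as $C\omega_M+i\partial\bar\partial(\cdots)$ does not guarantee this, and in the example above classes in $\R(1,1)$ vanish on the end but not on $M$.

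\textbf{The missing idea.} Build $\omega$ from the positive curvature form $i\Theta$ of $-K_M$ in Claim~\ref{tosatti}, and exploit that on $M\setminus E\cong C_0\setminus\{o\}$ the forms $i\Theta$ and $\rho_{\omega_0}$ are curvature forms of two hermitian metrics on the same line bundle $-K$. Hence $i\Theta=\rho_{\omega_0}-i\partial\bar\partial u$, where $u$ is the logarithm of the ratio of the metrics. This $u$ is a globally defined function on the end, and no cohomological hypothesis is needed. Then take $i\Theta+i\partial\bar\partial(\zeta u)$ plus cut-off plurisubharmonic functions of $r$, and average over $T$. The result is K\"ahler, equals $\omega_0+\rho_{\omega_0}$ at infinity, and lies in $2\pi c_1(-K_M)$.

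\textbf{The rest goes through with this $\omega$.} Your route via $H^1(M,\mathcal{O})=0$ then works: apply relative Kawamata--Viehweg to $\mathcal{O}_M=K_M\otimes(-K_M)$, then use Leray over the Stein cone, so that exact real $(1,1)$-forms are $i\partial\bar\partial$-exact. This is a valid alternative to the paper's argument. The paper instead writes the potentials explicitly as $F=v-\tilde u+\tfrac12\theta_X$ and $\varphi=\tilde f-f+F-\log(\tilde\omega^n/\omega^n)$. Each term there comes from a difference of curvature forms of $-K_M$ or from the Hamiltonian of $JX$, so no vanishing theorem is needed.
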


Note that a priori nothing is known about the growth of $\varphi$ at infinity.

\begin{proof}[Proof of Proposition \ref{mainprop}]
This proposition has already been noted in the literature, namely in \cite[Proposition 3.2]{babu}, where the relevant references are given. However, 
for the convenience of the reader, we provide the details here.
\begin{enumerate}
\item When $E=\emptyset$ so that $M$ is biholomorphic to $\mathbb{C}^{n}$ by Proposition \ref{sexxy}(i), \cite[Lemma 4.5]{cif-esp} gives us a global $T$-invariant K\"ahler metric $g$ on $\mathbb{C}^{n}$ which is equal to the $T$-invariant K\"ahler cone metric $g_{0}$ of Proposition \ref{jenny} outside a compact subset containing the origin. One just amalgamates the flat metric in a neighbourhood of the origin with the cone metric at infinity in a standard way and then averages the resulting metric over the action of $T$.

When $E\neq\emptyset$, working with the shrinking soliton $\tilde{\omega}$ and the strictly positive curvature form $i\Theta$ given by Claim \ref{tosatti}, we proceed as in \cite[Proposition 3.1]{con-der}. In what follows, we identify $M\setminus E$ and the complement of the apex of $C_{0}$ via $\pi$.

Since $\rho_{\omega_{0}}$ is the curvature form of the hermitian metric on $-K_{C_{0}}$ induced by $\omega_{0}$, there exists a smooth function $u$ on $M\setminus E$ such that $$i\Theta=\rho_{\omega_{0}}-i\partial\bar{\partial}u.$$ We proceed as in the proof of \cite[Lemma 2.15]{Conlon}. Let $\alpha>0$ and let $\psi_{\alpha}:\R^+\to\R^+$ be smooth with $\psi_{\alpha}',\psi_{\alpha}''\geq 0$ and
$$\psi_{\alpha}(t) = \begin{cases}
\left(\frac{\epsilon}{3}\right)^{2\alpha} & \textrm{if}\;\,t<\left(\frac{\epsilon}{2}\right)^{2\alpha},\\
t & \textrm{if}\;\,t>\epsilon^{2\alpha}.
\end{cases}
$$
Then $\Psi_{\alpha}:=\psi_{\alpha}\circ r^{2\alpha}: M \to \R^+$ satisfies
$$
i\partial\bar{\partial}\Psi_{\alpha} =
\begin{cases}
0 &\textrm{on}\;E \cup \{0 < r <\frac{\epsilon}{2}\},\\
\psi_{\alpha}'' i\p r^{2\alpha}\wedge\bar{\p}r^{2\alpha}+\psi_{\alpha}'i\p\bar{\p}r^{2\alpha} &\textrm{on}\;\{r > \frac{\epsilon}{4}\}.
\end{cases}
$$
Clearly $i\partial\bar{\partial}\Psi_{\alpha}\geq 0$ everywhere on $M$ and $i\partial\bar{\partial}\Psi_{\alpha}=i\partial\bar{\partial}r^{2\alpha}>0$ on $\{r>\epsilon\}$. {Also}, fix a cutoff function $\zeta:\mathbb{R}^{+}\to\R^{+}$ with
$$
\zeta(t) = \begin{cases}
0 & \textrm{if}\;\,t<2,\\
1 & \textrm{if}\;\,t>3,
\end{cases}
$$
and for $R>4\epsilon$, define $\zeta_{R}:M\to\mathbb{R}$ by $\zeta_{R}:=\zeta\circ(r/R)$. We construct
$$\hat{\omega}:=i\Theta+i\p\bar{\p}(\zeta_{\epsilon}u) + Ci\p\bar{\p}(\zeta_{R}\Psi_{\alpha})+i\p\bar{\p}\Psi_{1}$$
with $C$ and $R$ to be determined and with $\alpha\in(0,\,1)$ fixed. {Note that} $$\textrm{$\hat{\omega} = i\Theta+Ci\p\bar{\p}(\zeta_{R}\Psi_{\alpha})+i\p\bar{\p}\Psi_{1}\geq i\Theta+i\p\bar{\p}\varphi>0$ on $E\cup\{0< r < 2\epsilon\}$}$$ because $\Psi_{\alpha}$ and $\Psi_{1}$ are plurisubharmonic; $\hat{\omega} =\rho_{\omega_{0}} + i\p\bar{\p}r^{2} > 0$ on $\{2R < r \}$, after increasing $R$ if necessary, because $|\rho_{\omega_{0}}|_{i\partial\bar{\partial}r^{2}} = O(r^{-2})$; $\hat{\omega}>0$ on $\{2\epsilon\leq r\leq 3\epsilon\}$ by compactness if $C$ is made large enough; $\hat{\omega}=\rho_{\omega_{0}}+Ci\partial\bar{\partial}r^{2\alpha}+i\partial\bar{\partial}r^{2}>0$ on $\{3\epsilon<r<R\}$ after further increasing $C$ independently of $R$, which one can do since $|\rho_{\omega_{0}}|_{i\partial\bar{\partial}r^{2\alpha}}=O(r^{-2\alpha})$; and finally, $\hat{\omega} > 0$ on $\{2R \leq r \leq 3R\}$ after further increasing $R$ if necessary, since $\Psi_{\alpha}$ is of lower order compared to $\Psi_{1}$. In conclusion, $\hat{\omega}_{c}$ is a genuine K\"ahler form {on $M$} for suitable choices of $C$ and $R$ with $\hat{\omega} = \omega_{0}+\rho_{\omega_{0}}$ on $\{r>2R\}$.

We next average $\hat{\omega}$ over the action of the torus $T$ on $M$ induced by the flow of the vector field $J_{0}r\partial_{r}$ on $C_{0}$ by setting
$$\omega:=\frac{1}{|T|}\int_{T}\psi_{g}^{*}\hat{\omega}\,d\mu(g)=i\widetilde{\Theta}+i\partial\bar{\partial}\tilde{u},$$
where $\psi_{g}:M\to M$ is the automorphism of $M$ induced by $g\in T$, $\widetilde{\Theta}$ is the average of $\Theta$ over the action of $T$, and where $\tilde{u}$ is defined implicitly. Since there is a path in $T$ connecting $g$ to the identity, we have that $\psi_{g}^{*}[\hat{\omega}]=[\psi_{g}^{*}\hat{\omega}]=[\hat{\omega}]$, from which it follows that $[\omega]=[\hat{\omega}]$. Moreover, it is clear $\omega$ is $T$-invariant. Finally, since $T$ acts by holomorphic isometries on the slices of the cone $C_{0}$, we have that $\psi_{g}^{*}\rho_{\omega_{0}}=\rho_{\omega_{0}}$ and $\psi_{g}^{*}\omega_{0}=\omega_{0}$ for every $g\in T$. Hence $\omega=\omega_{0}+\rho_{\omega_{0}}$ on $\{r>2R\}$ also.
This gives us the desired metric in this case.

Next, we derive \eqref{ivin}. Since $\mathcal{L}_{JX}\omega=0$ and because $M$ is simply connected by Proposition \ref{jenny}(ii) and $JX$ is Killing,
we can find a smooth real-valued function $\theta_{X}$ such that $\omega\lrcorner X=d\theta_{X}\circ J$. This allows us to write
$$\mathcal{L}_{X}\omega=d(\omega\lrcorner X)=i\partial\bar{\partial}\theta_{X}.$$ Since $\mathcal{L}_{Y}\mathcal{L}_{X}\omega=d(\omega\lrcorner[Y,\,X])=0$ for any $Y\in\mathfrak{t}$ because $[X,\,Y]=0$, by averaging over the action of $T$, we may assume that $\theta_{X}$ is invariant under $T$. Moreover, as the difference of two curvature forms, we have that $$\rho_{\omega}-i\widetilde{\Theta}=i\partial\bar{\partial}v$$ for some $v\in C^{\infty}(M)$ . Averaging this equation over the action of the $T$, we may assume that $v$ is invariant under $T$. Next, we write
\begin{equation}\label{sexier}
\begin{split}
\rho_{\omega}+\frac{1}{2}\mathcal{L}_{X}\omega-\omega&=\rho_{\omega}+\frac{1}{2}\mathcal{L}_{X}\omega-i\widetilde{\Theta}-i\partial\bar{\partial}\tilde{u}\\
&=i\partial\bar{\partial}\left(v-\tilde{u}+\frac{1}{2}\theta_{X}\right)\\
&=i\partial\bar{\partial}F
\end{split}
\end{equation}
for $F:=v-\tilde{u}+\frac{1}{2}\theta_{X}\in C^{\infty}(M)$. In particular, notice that $F$ is $T$-invariant.

Next observe that at infinity we have
\begin{equation}\label{sexiest}
\begin{split}
\rho_{\omega}+\frac{1}{2}\mathcal{L}_{X}\omega-\omega&=\rho_{\omega}-(\omega_{0}+\rho_{\omega_{0}})+
\frac{1}{2}\left(\mathcal{L}_{r\p_{r}}\omega_{0}+\underbrace{\mathcal{L}_{r\p_{r}}\rho_{\omega_{0}}}_{=\,0}\right)\\
&=-i\partial\bar{\partial}\log\left(\frac{(\omega_{0}+\rho_{\omega_{0}})^{n}}{\omega^{n}_{0}}\right)
\underbrace{-\omega_{0}+\frac{1}{2}\mathcal{L}_{r\p_{r}}\omega_{0}}_{=\,0}\\
&=-i\partial\bar{\partial}\log\left(\frac{(\omega_{0}+\rho_{\omega_{0}})^{n}}{\omega^{n}_{0}}\right)\\
&=i\partial\bar{\partial}G
\end{split}
\end{equation}
for
\begin{equation*}
\begin{split}
G=G(\omega_{0})&:=-\log\left(\frac{(\omega_{0}+\rho_{\omega_{0}})^{n}}{\omega^{n}_{0}}\right)\\
&=-\log\Bigg(1+\underbrace{n\frac{\omega_{0}^{n-1}\wedge \rho_{\omega_{0}}}{\omega^{n}_{0}}}_{=\,O(r^{-2})}+O(r^{-4})\Bigg)\\
&=-\left(n\frac{\omega_{0}^{n-1}\wedge \rho_{\omega_{0}}}{\omega^{n}_{0}}+O(r^{-4})\right)\\
&=-\frac{s_{\omega_{0}}}{2}+O(r^{-4})\in C^{\infty}_{-2}(M).
\end{split}
\end{equation*}
Notice that $G$ is also $T$-invariant. On subtracting \eqref{sexier} from \eqref{sexiest}, we see that at infinity
\begin{equation}\label{beyonce}
i\partial\bar{\partial}(F-G)=0.
\end{equation}
By Lemma \ref{nice}, we must have that $F-G=c_{0}$ at infinity for some constant $c_{0}$. The desired conclusion follows.

  \item Since $M$ is simply connected by Proposition \ref{jenny}(ii), there exists a smooth real-valued function $f\in C^{\infty}(M)$, defined up to a constant, with $-\omega\lrcorner JX=df$. Any such choice of $f$ is invariant under the action of $T$ by virtue of the fact that $\omega\lrcorner JX$ is invariant under this action. Next, notice that $-\omega_{0}\lrcorner J_{0}r\partial_{r}=d\left(\frac{r^{2}}{2}\right)$, where recall $J_{0}$ is the complex structure on $C_{0}$,
and $\rho_{\omega_{0}}\lrcorner J_{0}r\partial_{r}=0$ because, as is well-known, the Ricci form of a K\"ahler cone metric is a basic $(1,\,1)$-form.
Henceforth suppressing the pullback by $\pi$, we therefore see from the form of $\omega$ given in part (i) that on the complement of a large compact subset $K\subseteq M$ containing $E$,
$$df=-\omega\lrcorner JX=-\left(\omega_{0}+\rho_{\omega_{0}}\right)\lrcorner JX=d\left(\frac{r^{2}}{2}\right),$$
so that $f$ differs from $\frac{r^{2}}{2}$ by a constant on this set, meaning that $f=\frac{r^{2}}{2}+\operatorname{const.}$
on $M\setminus K$. Normalise $f$ so that this constant is equal to $-n$. Then $f=\pi^{*}\left(\frac{r^{2}}{2}-n\right)$ outside a compact set.
What remains to show is that with respect to this normalisation, \eqref{normal2} holds true.

To this end, let $F$ be the smooth function from part (i) satisfying
$$ i \p \bp F = \rho_{\omega} + \frac{1}{2}\mathcal{L}_X \omega-\omega.$$
Using the $JX$-invariance of $F$ and $f$,
contract this equation with $X^{1,\,0}:=\frac{1}{2}(X-iJX)$ and use the Bochner formula to derive that
$$i\bar{\partial}\left(\Delta_{\omega}f-\frac{X}{2}\cdot f+f+\frac{X}{2}\cdot F\right)=0.$$
As a real-valued holomorphic function, we must have that $\Delta_{\omega}f-\frac{X}{2}\cdot f+f+\frac{X}{2}\cdot F$ is
constant on $M$. In light of the fact that on $M\setminus K$,
\begin{equation}\label{bonjour}
\begin{split}
\Delta_{\omega}f-\frac{X}{2}\cdot f+f&=(\Delta_{\omega}-\Delta_{\omega_{0}})f+\Delta_{\omega_{0}}f-\frac{X}{2}\cdot f+f\\
&=O(r^{-2})+\underbrace{\Delta_{\omega_{0}}\left(\frac{r^{2}}{2}-n\right)-\frac{r}{2}\frac{\partial}{\partial r}\left(\frac{r^{2}}{2}-n\right)+\left(\frac{r^{2}}{2}-n\right)}_{=\,0}\\
&=O(r^{-2})
\end{split}
\end{equation}
and $\frac{X}{2}\cdot F=O(r^{-2})=O(f^{-1})$, this constant must be zero so that globally on $M$, so that
\begin{equation*}
\Delta_{\omega}f+f-\frac{X}{2}\cdot f=-\frac{X}{2}\cdot F=O(f^{-1}).
 \end{equation*}
This proves \eqref{normal12} and \eqref{normal2}. Finally, as $f=\pi^{*}\left(\frac{r^{2}}{2}-n\right)$ outside a compact subset of $M$, the first estimate of
\eqref{bds-cov-der-f} is clear. As for the second, part (i) gives us that
$$\left|\nabla^{g,\,k}\left(\Ric(g)+\nabla^{g,2}f-g\right)\right|_{g}=O\left(f^{-2-\frac{k}{2}}\right)\qquad\textrm{for all $k\geq 0$}$$
outside a compact subset of $M$. But then $g$ is asymptotically conical at rate $-2$ by part (i) again, and so $\left|\nabla^{g,\,k}\Ric(g)\right|_{g}=O\left(f^{-1-\frac{k}{2}}\right)$ for all $k\geq 0$.

\item The fact that the shrinking soliton takes the form as stated follows from \eqref{ivin} and the defining equation for a shrinking soliton. Indeed, by definition, any shrinking gradient K\"ahler-Ricci soliton $\tilde{\omega}$ on $M$ with soliton vector field $X$ and soliton potential $\tilde{f}$ satisfies the equation
$$\tilde{\omega}=\rho_{\tilde{\omega}}+i\partial\bar{\partial}\tilde{f},$$
where $\rho_{\tilde{\omega}}$ is the Ricci form of $\tilde{\omega}$. Since $df=-\omega\lrcorner JX$, we know that $X=\nabla^{g}f$, where $g$ is the K\"ahler metric associated to $\omega$, and so from \eqref{ivin} it follows that 
$$\omega=\rho_{\omega}+i\partial\bar{\partial}f-i\partial\bar{\partial}F.$$ Thus,
$$
\tilde{\omega}-\omega=\rho_{\tilde{\omega}}-\rho_{\omega}+i\partial\bar{\partial}\left(\tilde{f}-f+F\right)=i\partial\bar{\partial}\left(\tilde{f}-f+F-\log\left(
\frac{\tilde{\omega}^{n}}{\omega^{n}}\right)\right),
$$
so that $\tilde{\omega}=\omega+i\partial\bar{\partial} \varphi$ with $$\varphi:=\tilde{f}-f+F-\log\left(\frac{\tilde{\omega}^{n}}{\omega^{n}}\right).$$

Next, one derives the complex Monge-Amp\`ere equation as in \cite[Proof of Proposition 3.2]{con-der} using Lemma \ref{nice}. To this end, we write $\omega_{\varphi}:=\omega+i\partial\bar{\partial}\varphi$ for the shrinking soliton $\tilde{\omega}$ as previously explained. Then
\begin{equation}\label{computation}
\begin{split}
0&=\rho_{\omega_{\varphi}}-\omega_{\varphi}+\frac{1}{2}\mathcal{L}_{X}
\omega_{\varphi}\\
&=\rho_{\omega_{\varphi}}-\rho_{\omega}+\rho_{\omega}-\omega_{\varphi}+\frac{1}{2}\mathcal{L}_{X}\omega_{\varphi}\\
&=-i\partial\bar{\p}\log\left(\frac{(\omega+i\partial\bar{\partial}\varphi)^{n}}{\omega^{n}}\right)+
\rho_{\omega}-\omega_{\varphi}+\frac{1}{2}\mathcal{L}_{X}\omega_{\varphi}\\
&=-i\partial\bar{\p}\log\left(\frac{(\omega+i\partial\bar{\partial}\varphi)^{n}}{\omega^{n}}\right)-
i\partial\bar{\partial}\varphi+\frac{1}{2}i\partial\bar{\partial}\left(X\cdot\varphi\right)
+\left(\rho_{\omega}-\omega+\frac{1}{2}\mathcal{L}_{X}\omega\right),\\
\end{split}
\end{equation}
so that
\begin{equation*}
\begin{split}
i\partial\bar{\p}\left(\varphi+\log\left(\frac{(\omega+i\partial\bar{\partial}\varphi)^{n}}{\omega^{n}}\right)-\frac{1}{2}X\cdot\varphi\right)
&=\rho_{\omega}-\omega+\frac{1}{2}\mathcal{L}_{X}\omega.\\
\end{split}
\end{equation*}
Now, recall from part (i) that the right-hand side of this equation is equal to $i\partial\bar{\partial}F$.
Thus, we arrive at the equation
\begin{equation}\label{gorilla}
i\partial\bar{\p}\left(-\varphi+\log\left(\frac{(\omega+i\partial\bar{\partial}\varphi)^{n}}{\omega^{n}}\right)+\frac{1}{2}X\cdot\varphi-F\right)=0.\\
\end{equation}
As the function in the parentheses is invariant under the flow of $JX$,
we can apply Lemma \ref{nice} to determine that 
$$-\varphi+\log\left(\frac{(\omega+i\partial\bar{\partial}\varphi)^{n}}{\omega^{n}}\right)+\frac{1}{2}X\cdot\varphi-F=c$$
for some constant $c\in\mathbb{R}$.
By absorbing $c$ into $\varphi$, the result follows.

Finally, the $T$-invariance of $\varphi$ can be seen by averaging over the action of $T$.
\end{enumerate}
\end{proof}

\section{Curvature estimates}

In this section, we use the complex Monge-Amp\`ere equation \eqref{cmaa} derived in the previous section to show that any shrinking gradient K\"ahler-Ricci soliton on a resolution of a K\"ahler cone has quadratic curvature decay. From this, it immediately follows that the soliton is asymptotically conical.

\subsection{Properties of shrinking gradient Ricci solitons}

We begin by recalling the following identities and properties of a shrinking gradient Ricci soliton.
\begin{lemma}\label{id-sol-egs}
Let $(M^{m},\,g,\,X)$ be a complete shrinking gradient Ricci soliton of real dimension $m$ 
satisfying $\Ric(g)+\frac{1}{2}\mathcal{L}_{X}g=g$ with scalar curvature $R_{g}$ and soliton vector field $X=\nabla^{g}f$ for a smooth real-valued function $f:M\to\mathbb{R}$.
Then:
\begin{align}
& R_g+\Delta_g f  =m>0, \label{equ:1} \\
&\nabla^g R_g= 2\Ric(g)(X,\,\cdot)^{\sharp}, \label{equ:2} \\
&d\left(|X|_g^2+R_g-2f\right)=0. \label{equ:3}
\end{align}
In addition, $R_{g}$ satisfies the elliptic equation
\begin{equation}\label{ell-equ-scal-curv}
\frac{1}{2}\Delta_f R_g= R_g-|\Ric(g)|^2_g,
\end{equation}
where $\Delta_{g,\,X}:=\Delta_{g}-X\cdot$.
\end{lemma}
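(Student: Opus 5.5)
These are the standard Hamilton identities, and I would derive each from the soliton equation $\Ric(g)+\nabla^{g,2}f=g$ (using $\frac12\mathcal{L}_{\nabla f}g=\nabla^2 f$) by tracing and applying the contracted second Bianchi identity.

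\emph{Identity \eqref{equ:1}.} Taking the trace of $\Ric(g)+\nabla^2 f=g$ gives $R_g+\Delta_g f=m$.

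\emph{Identity \eqref{equ:2}.} I would take the divergence of the soliton equation. The contracted Bianchi identity gives $\operatorname{div}\Ric=\frac12 dR_g$. Commuting covariant derivatives gives
\[
\operatorname{div}\nabla^2 f=d\Delta f+\Ric(\nabla f,\cdot).
\]
Since $\operatorname{div} g=0$, these combine to
\[
\tfrac12 dR_g+d\Delta f+\Ric(\nabla f,\cdot)=0.
\]
Substituting $d\Delta f=-dR_g$ from \eqref{equ:1} yields $\frac12 dR_g=\Ric(\nabla f,\cdot)$, which is \eqref{equ:2}.

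\emph{Identity \eqref{equ:3}.} I would compute
\[
d|\nabla f|^2=2\nabla^2 f(\nabla f,\cdot)=2\big(g-\Ric\big)(\nabla f,\cdot)=2df-dR_g,
\]
using \eqref{equ:2} in the last step. This is exactly \eqref{equ:3}.

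\emph{The elliptic equation \eqref{ell-equ-scal-curv}.} I would take the divergence of $\nabla R_g=2\Ric(\nabla f)$. This gives
\[
\Delta R_g=2(\operatorname{div}\Ric)(\nabla f)+2\langle \Ric,\nabla^2 f\rangle.
\]
The first term equals $\langle\nabla R_g,\nabla f\rangle$ by Bianchi. For the second, $\nabla^2 f=g-\Ric$ gives $2\langle\Ric,\nabla^2 f\rangle=2R_g-2|\Ric|^2$. Hence
\[
\Delta R_g-X\cdot R_g=2R_g-2|\Ric|^2,
\]
which is \eqref{ell-equ-scal-curv} with $\Delta_f=\Delta_g-X\cdot$.

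The positivity $m>0$ is trivial. There is no real obstacle here; the only care needed is the sign convention in the Ricci commutation formula, $\nabla_i\nabla_j\nabla_i f=\nabla_j\Delta f+R_{jk}\nabla_k f$.
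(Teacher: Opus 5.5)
Your derivation is correct. Tracing the soliton equation, taking its divergence via the contracted Bianchi identity and $\operatorname{div}\nabla^2 f=d\Delta f+\Ric(\nabla f,\cdot)$, and then taking the divergence of \eqref{equ:2} gives all four identities with the right constants. The paper gives no argument of its own and simply cites Chow--Lu--Ni, Chapter~4, where this is the standard computation, so your proof matches the intended one.
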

The proof of the above identities can be found for instance in \cite[Chapter 4]{Cho-Lu-Ni-Boo}. 

\noindent\fbox{%
    \parbox{\textwidth}{%
We henceforth normalize the potential function $f$ of a shrinking gradient Ricci soliton $(M,\,g,\,X)$ with $X=\nabla^{g}f$
so that
\begin{equation}\label{normal-f}
\Delta_{g,\,X}f+2f=0.
\end{equation}}}

We also have:
\begin{theorem}\label{prop-sum-sgrs}
Let $(M^{m},\,g,\,X)$ be a complete non-compact shrinking gradient Ricci soliton of real dimension $m$ 
satisfying $\Ric(g)+\frac{1}{2}\mathcal{L}_{X}g=g$ with scalar curvature $R_{g}$ and soliton vector field $X=\nabla^{g}f$ for a smooth real-valued function $f:M\to\mathbb{R}$.
Then the following hold true.
\begin{enumerate}
\item \textnormal{(Non-negativity of the scalar curvature {\cite[Theorem 1.3(ii)]{zhang12}}).}~$R_{g}\geq0$. \\

\item \textnormal{(Growth of the soliton potential {\cite[Theorem 1.1]{caoo}}).}~For $x\in M$, $f$ satisfies the estimates
$$\frac{1}{4}(d_{g}(p,\,x)-c_{1})^{2}-C\leq f(x)\leq\frac{1}{4}(d_{g}(p,\,x)+c_{2})^{2}$$
for some $C>0$, where $d_{g}(p,\,\cdot)$ denotes the distance to a fixed point $p\in M$ with respect to $g$.
Here, $c_{1}$ and $c_{2}$ are positive constants depending only on the real dimension of $M$ and
the geometry of $g$ on the unit ball $B_{p}(1)$ based at $p$.\\

\item \textnormal{(Polynomial volume growth \cite[Theorem 1.2]{caoo}).}~For each $x\in M$, there exists a positive constant $C>0$
such that $$\operatorname{vol}_{g}(B_{r}(x))\leq Cr^{m}\quad\textrm{for $r > 0$ sufficiently large}.$$\\
\end{enumerate}
\end{theorem}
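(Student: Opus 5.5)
Since each item of the statement is a known result for gradient shrinkers, I would not reprove it from scratch. The plan is to check that the hypotheses and normalisations match those of the cited works and to recall the mechanism of each proof. Throughout I use Lemma \ref{id-sol-egs} with the normalisation \eqref{normal-f}, and I note that the potential is only determined up to an additive constant. Only the lower bound in part (2) involves the constant $C$, so normalisation is harmless there.

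\emph{Part (1).} I would follow Chen's argument as presented in \cite{zhang12}. By Cauchy--Schwarz, $|\Ric(g)|_g^2\geq R_g^2/m$. Substituting this into \eqref{ell-equ-scal-curv} gives the differential inequality
\begin{equation*}
\tfrac{1}{2}\Delta_{g,\,X}R_g\leq R_g-\tfrac{1}{m}R_g^2.
\end{equation*}
I would apply the maximum principle to $\eta R_g$, where $\eta$ is a cutoff function of $d_g(p,\cdot)/\rho$. At a negative minimum of $\eta R_g$, the quadratic term $-R_g^2/m$ dominates the error terms coming from $\Delta\eta$ and $X\cdot\eta$. Letting $\rho\to\infty$ then forces $R_g\geq0$. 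The main obstacle is that no curvature bound is available, so the Laplacian comparison for $d_g$ must be replaced by estimates that use only the soliton equation. This is exactly what \cite{zhang12} supplies, so I would cite it.

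\emph{Part (2).} I would follow Cao--Zhou \cite{caoo}. By \eqref{equ:3}, $|\nabla f|^2+R_g-2f$ is constant, and the normalisation \eqref{normal-f} fixes this constant. Combining this with $R_g\geq0$ from part (1) gives $|\nabla f|^2\leq 2f$ up to the factor dictated by the normalisation, which yields $|\nabla\sqrt{f}|\leq\tfrac{1}{2}$ in the scaling of the statement. Integrating this along a minimizing geodesic from $p$ gives the upper bound. For the lower bound, take a minimizing unit-speed geodesic $\gamma$ from $p$ to $x$ of length $s$. The second variation formula, applied with cutoff vector fields, gives $\int_\gamma\Ric(\gamma',\gamma')\leq C$. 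The soliton equation then gives $\frac{d^2}{dt^2}f(\gamma(t))=1-\Ric(\gamma',\gamma')$. Integrating twice and using the control on $B_p(1)$ yields $f(x)\geq\frac{1}{4}(s-c_1)^2-C$.

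\emph{Part (3).} I would again follow \cite{caoo}. Set $D(t)=\{f<t\}$, $V(t)=\vol D(t)$ and $\chi(t)=\int_{D(t)}R_g$. Tracing the soliton equation gives \eqref{equ:1}. Integrating it over $D(t)$ and using the coarea formula together with $|\nabla f|^2\approx f-R_g$ gives a differential inequality of the form
\begin{equation*}
\tfrac{m}{2}V(t)-\chi(t)\leq tV'(t)-\chi'(t).
\end{equation*}
Since $\chi\leq\frac{m}{2}V$, this integrates to $V(t)\leq Ct^{m/2}$. By part (2), $B_x(r)\subset D(\frac{1}{4}(r+c)^2)$, which gives the bound $Cr^m$.

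The delicate point in this last step is handling $\chi$ without assuming a sign beyond part (1). I would follow \cite{caoo}'s trick of working with $V-\frac{2}{m}\chi$ in place of $V$.
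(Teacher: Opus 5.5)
Your route matches the paper's. The paper gives no argument for this theorem beyond the citations to \cite{zhang12} and \cite{caoo}, and your outline of part (3) is essentially Cao--Zhou's. The problem is the one step you say you will carry out: checking that the normalisations match. They do not match, and the mismatch is one of scale, not the additive constant in $f$. Under the hypothesis $\Ric(g)+\nabla^2 f=g$, the trace identity $R_g+\Delta_g f=m$ together with the normalisation $\Delta_g f-|\nabla f|_g^2+2f=0$ gives $|\nabla f|_g^2+R_g=2f+m$. Hence $R_g\geq 0$ yields $|\nabla\sqrt{2f+m}|_g\leq 1$, and so $f\leq\frac12(d_g(p,\cdot)+c_2)^2-\frac m2$.

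Your step ``$|\nabla f|^2\le 2f$, hence $|\nabla\sqrt f|\le\frac12$'' is an arithmetic slip: it gives $\frac{1}{\sqrt2}$. The factor $\frac14$ belongs to Cao--Zhou's normalisation $\Ric+\nabla^2 f=\frac12 g$. With the normalisation of the statement, the upper bound $f\leq\frac14(d_g+c_2)^2$ is false: the Gaussian soliton on $\R^m$ has $f=\frac{|x|^2}{2}+\mathrm{const}$.

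To import \cite{caoo} correctly, apply it to $\tilde g:=2g$ and $\tilde f:=f+\frac m2$. Ricci tensors and Hessians are unchanged by constant rescaling, so $\Ric(\tilde g)+\nabla^2_{\tilde g}\tilde f=\frac12\tilde g$ and $R_{\tilde g}+|\nabla\tilde f|^2_{\tilde g}=\tilde f$. Since $d_{\tilde g}=\sqrt2\,d_g$, this gives $\frac12(d_g-c_1)^2-\frac m2\leq f\leq\frac12(d_g+c_2)^2-\frac m2$ after renaming constants. The stated lower bound follows a fortiori, but the upper bound holds only with $\frac14$ replaced by $\frac12$, so part (2) cannot be proved as written. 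The rest of the paper only uses $R_g\geq 0$, properness and lower boundedness of $f$, so nothing downstream breaks. Your part (3) identity has the same slip: here it reads $mV-\chi=(2t+m)V'-\chi'$ with $0\leq\chi\leq mV$, which still gives $V(t)\leq Ct^{m/2}$.

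Separately from the constants, your mechanism for the lower bound has a step that fails. Along a minimising geodesic $\gamma:[0,s]\to M$ from $p$ to $x$, the second variation formula only gives $\int_0^s\phi^2\Ric(\gamma',\gamma')\,dt\leq(m-1)\int_0^s(\phi')^2\,dt$ for $\phi$ vanishing at both endpoints. It does not bound $\int_\gamma\Ric(\gamma',\gamma')$ uniformly in $x$, because on the last unit segment near $x$ there is no curvature control, and none is assumed.

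Cao--Zhou handle that segment as follows. They take $\phi(t)=s-t$ on $[s-1,s]$, substitute $\Ric(\gamma',\gamma')=1-\frac{d^2}{dt^2}f(\gamma(t))$ there, and integrate by parts against $1-\phi^2$. The boundary terms in $\frac{d}{dt}f(\gamma(t))$ at $t=s$ cancel, leaving $2\int_{s-1}^{s}\phi\,\frac{d}{dt}f(\gamma(t))\,dt\geq s-C$ with $C$ independent of $x$. This gives $\max_{\gamma([s-1,s])}|\nabla f|_g\geq s-C$. The bound on $f(x)$ then follows from $|\nabla f|_g\leq\sqrt{2f+m}$ and the fact that $\sqrt{2f+m}$ is $1$-Lipschitz. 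So the lower bound genuinely uses the gradient estimate, hence part (1); it is not a double integration of $1-\Ric(\gamma',\gamma')$.

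A minor point on part (1): \cite{zhang12} obtains $R_g\geq0$ by using completeness of $\nabla f$ to build a complete ancient Ricci flow, then invoking B.-L.~Chen's parabolic result. Your elliptic cutoff argument can also be made to work. The substitute for Laplacian comparison it needs is the weighted comparison $\Delta_{g,X}d_g(p,\cdot)\leq C$ on $M\setminus B_p(1)$, in the barrier sense, which follows from $\Ric(g)+\nabla^2 f=g$.
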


%\begin{proof}
%Since $(M,g,\nabla^gf)$ is a complete shrinking gradient Ricci soliton, the result of \cite{zhang12} implies that the scalar curvature of $g$ is nonnegative, no matter if the curvature is unbounded or not. 

%The lower bound on $f$ is due to \cite{Cao-Zhou}. The upper bound follows from the inequality $|\nabla^gf|^2_g\leq 2f$ on $M$ obtained by combining the nonnegativity of $\RR_g$ together with the soliton identity $|\nabla^gf|^2_g+\RR_g=2f$. \textcolor{red}{Should there be $2f+m$ here?}

%Finally, the volume growth of geodesic balls is proved in \cite[Theorem $1.4$]{Mun-Wan-Adv}.

%Finally, the proof of the growth on Ricci curvature can be found in \cite[Theorem $1.1$]{Mun-Ses}.
%\end{proof}

\subsection{Initial rough extrinsic bounds}
Recall the setup and notation of Proposition \ref{mainprop} from Section \ref{setupp}. By part (iii) of that proposition, any complete shrinking gradient K\"ahler-Ricci soliton $\tilde{\omega}$ on $M$ is of the form $\omega_{\varphi}:=\omega+i\partial\bar{\partial}\varphi>0$, where $\varphi:M\to\mathbb{R}$ is a smooth $T$-invariant real-valued function satisfying
\eqref{cmaa}. We write $g_{\varphi}$ for the corresponding K\"ahler metric. In light of Proposition \ref{mainprop}(ii), we can write $X=\nabla^{g_{\varphi}}f_{\varphi}$, 
where $f_{\varphi}:M\to\mathbb{R}$ is a smooth real-valued function defined by $f_{\varphi}:=f+\frac{X}{2}\cdot\varphi$. With this, we have the following bounds on $X\cdot\varphi$.
\begin{prop}\label{prop-0-t-1}
Let $\varphi$ satisfy \eqref{cmaa}. Then there exists $C>0$ such that 
$$-2f-C\leq X\cdot\varphi\leq 2f_{\varphi}+C.$$
\end{prop}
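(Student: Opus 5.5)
The plan is to express $X\cdot\varphi$ in terms of the two potentials and then use only lower bounds on them. By definition $f_{\varphi}=f+\frac{X}{2}\cdot\varphi$, so
$$X\cdot\varphi=2f_{\varphi}-2f.$$
The two inequalities of Proposition \ref{prop-0-t-1} then follow once both $f$ and $f_{\varphi}$ are bounded from below on $M$. No estimate on $\varphi$ itself is needed.

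\emph{Upper bound.} By Proposition \ref{mainprop}(ii), $f=\pi^{*}\left(\frac{r^{2}}{2}-n\right)$ outside a compact subset of $M$. Since $f$ is smooth, it is bounded on that compact set, and $\frac{r^{2}}{2}-n\geq -n$ outside it, so $f\geq -C$ on all of $M$. Consequently $X\cdot\varphi=2f_{\varphi}-2f\leq 2f_{\varphi}+2C$.

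\emph{Lower bound.} Here we use that $f_{\varphi}$ is a soliton potential. Since $X=\nabla^{g_{\varphi}}f_{\varphi}$ and $g_{\varphi}$ is the complete shrinking soliton $\tilde{g}$, the function $f_{\varphi}$ differs from the soliton potential $\tilde{f}$ of $(M,\tilde{g},X)$ by an additive constant. Note that $\tilde{f}$ is normalised as in \eqref{normal-f}, and $f_{\varphi}$ is normalised differently only by that constant. From \eqref{equ:3} applied to $g_{\varphi}$ we get $|X|^{2}_{g_{\varphi}}+R_{g_{\varphi}}=2f_{\varphi}+c$ for some constant $c$. By Theorem \ref{prop-sum-sgrs}(i), $R_{g_{\varphi}}\geq0$, and no curvature assumption is needed for this. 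Hence $2f_{\varphi}\geq -c$. Alternatively, the lower bound in Theorem \ref{prop-sum-sgrs}(ii) applied to $\tilde{f}$ gives the same conclusion. Therefore
$$X\cdot\varphi=2f_{\varphi}-2f\geq -2f-c.$$

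Taking $C$ to be the larger of the two constants gives the statement. There is no real obstacle. The only point requiring care is that the soliton identities are applied to the metric $g_{\varphi}$ with potential $f_{\varphi}$, which is legitimate precisely because $X=\nabla^{g_{\varphi}}f_{\varphi}$. This in turn rests on Proposition \ref{mainprop}(ii) together with $\omega_{\varphi}=\omega+i\partial\bar{\partial}\varphi$ and the $JX$-invariance of $\varphi$, since then $-\omega_{\varphi}\lrcorner JX=d\left(f+\frac{X}{2}\cdot\varphi\right)$.
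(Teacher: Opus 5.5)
Your proof is correct and follows the paper's argument: write $X\cdot\varphi=2f_{\varphi}-2f$, use the lower bound on $f$ from Proposition \ref{mainprop}(ii) for the upper estimate, and use a lower bound on the soliton potential $f_{\varphi}$ for the lower estimate. The only difference is how you get $\inf f_{\varphi}>-\infty$: the paper invokes Theorem \ref{prop-sum-sgrs}(ii) and then evaluates at a minimum point $p$, a zero of $X$ where $f_{\varphi}(p)=f(p)$, while you use \eqref{equ:3} together with $R_{g_{\varphi}}\geq 0$, which works equally well.
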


\begin{proof}
From Theorem \ref{prop-sum-sgrs}, we know that the soliton potential $f_{\varphi}$ of $(M,\,\omega_{\varphi},\,X)$ is proper and bounded from below. Because 
of this and the fact that $f_{\varphi}$ is continuous, $f_{\varphi}$ must attain a minimum. The fact that $X=\nabla^{g_{\varphi}}f_{\varphi}$ means that this minimum lies at a point $p\in M$ in the zero set of $X$. We therefore see that for all $x\in M$, $f_{\varphi}(x)\geq f_{\varphi}(p)=f(p)\geq -C$, where $C>0$ is a positive constant.
Here, the last inequality follows from the fact that $f$ is bounded from below, as seen from Proposition \ref{mainprop}(ii). This yields the asserted lower bound on $X\cdot\varphi$.
The lower bound on $f$ also gives us the asserted upper bound on $X\cdot\varphi$ in the following way: $X\cdot\varphi=2f_{\varphi}-2f\leq 2f_{\varphi}+2C$.
\end{proof}

We next have the following estimate on $X\cdot X\cdot\varphi$.
\begin{prop}\label{prop-1-t-1}
Let $\varphi$ satisfy \eqref{cmaa}. Then $X\cdot X\cdot \varphi\leq 2X\cdot\varphi$ outside a compact subset of $M$. In particular, there exists a positive constant $C>0$ such that $X\cdot\varphi-2\varphi\leq C$ on $M$, and there exist positive constants $C,\,C'>0$ such that $X\cdot\varphi\leq Cf+C'$ and $\varphi\leq Cf+C'$ on $M$.

Finally, there exist positive constants $C,\,C'>0$ such that $f_{\varphi}\leq Cf+C'$ on $M$.
\end{prop}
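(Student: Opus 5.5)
The plan is to read off the differential inequality $X\cdot X\cdot\varphi\leq 2X\cdot\varphi$ from the soliton identity \eqref{equ:3} and the non-negativity of scalar curvature (Theorem \ref{prop-sum-sgrs}(1)), both applied to $(M,\,g_{\varphi},\,X)$. Then I would integrate it along the flow lines of $X$, which outside a compact set are the radial lines of the cone, since $d\pi(X)=r\partial_{r}$.

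\emph{Step 1: the inequality on $X\cdot X\cdot\varphi$.} Since $X=\nabla^{g_{\varphi}}f_{\varphi}$, we have
\[
|X|^{2}_{g_{\varphi}}=X\cdot f_{\varphi}=X\cdot f+\tfrac{1}{2}X\cdot X\cdot\varphi .
\]
Outside a compact set, $f=\frac{r^{2}}{2}-n$ by Proposition \ref{mainprop}(ii) and $X=r\partial_{r}$, so $X\cdot f=r^{2}=2f+2n$. On the other hand, \eqref{equ:1}, \eqref{equ:3} and $R_{g_{\varphi}}\geq0$ give
\[
|X|^{2}_{g_{\varphi}}=2f_{\varphi}+c-R_{g_{\varphi}}\leq 2f+X\cdot\varphi+c
\]
for a constant $c$. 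Substituting, we obtain $\tfrac{1}{2}X\cdot X\cdot\varphi\leq X\cdot\varphi+(c-2n)$ outside a compact set.

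The main obstacle is showing that $c=2n$ exactly, i.e.\ that no additive constant survives. The freedom here is the constant absorbed into $\varphi$ at the end of the proof of Proposition \ref{mainprop}(iii). I would check that, with that normalisation, $f_{\varphi}=f+\frac{X}{2}\cdot\varphi$ satisfies the normalisation \eqref{normal-f} (with the Kähler convention for the Laplacian). To do this, I would take the logarithm of \eqref{cmaa}, differentiate along $X$, and combine the result with \eqref{normal12}. If this check fails, I would retain the constant. All of the ``in particular'' consequences below survive with $C$ in place of $0$, at the cost of an extra exponential term $Ce^{2t}$, which is harmless.

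\emph{Step 2: integration along the flow.} Choose $R_{0}$ so that Step 1 holds on $\{r\geq R_{0}\}$. For $x$ with $r(x)=R_{0}$, let $\gamma_{x}(t)$ be the flow line of $X$, so that $r(\gamma_{x}(t))=R_{0}e^{t}$ for $t\geq0$. Set $u(t)=\varphi(\gamma_{x}(t))$. Then $u'=X\cdot\varphi$ and $u''=X\cdot X\cdot\varphi$, so Step 1 reads $u''\leq 2u'$. Hence
\[
\frac{d}{dt}\bigl(u'-2u\bigr)\leq0
\qquad\textrm{and}\qquad
\frac{d}{dt}\bigl(e^{-2t}u'\bigr)\leq0 .
\]
Since $\{r=R_{0}\}$ is compact, the initial values $u(0)$ and $u'(0)$ are uniformly bounded. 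Therefore $X\cdot\varphi-2\varphi\leq C$ on $\{r\geq R_{0}\}$, and trivially also on the compact set $\{r\leq R_{0}\}$. The second monotonicity gives $X\cdot\varphi\leq Ce^{2t}\leq C'r^{2}\leq Cf+C'$. Integrating $u'\leq Ce^{2t}$ then gives $\varphi\leq Cf+C'$.

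\emph{Step 3: the bound on $f_{\varphi}$.} Finally,
\[
f_{\varphi}=f+\tfrac{1}{2}X\cdot\varphi\leq Cf+C'
\]
by the bound on $X\cdot\varphi$ from Step 2.
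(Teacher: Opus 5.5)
Your proposal is correct and follows the paper's argument. The paper derives the same inequality from $0\le R_{g_\varphi}=2n-\Delta_{g_\varphi}f_\varphi$ together with the normalisation \eqref{normal-f} for $f_\varphi$, which is equivalent to your use of \eqref{equ:3} with $c=2n$, and then integrates along the flow lines of $X$ just as you do. The normalisation check you flag, which the paper takes for granted, goes through as you describe: since $\Delta_{g_\varphi}f_\varphi=\operatorname{div}_{g_\varphi}X=\Delta_g f+X\cdot\bigl(F+\frac{X}{2}\cdot\varphi-\varphi\bigr)$, one gets $\Delta_{g_\varphi}f_\varphi-X\cdot f_\varphi+2f_\varphi=\Delta_g f-X\cdot f+2f+X\cdot F=0$ by twice \eqref{normal12} (recall $\Delta_g=2\Delta_\omega$). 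This is fortunate, because your fallback is not right as stated: a leftover constant $\kappa>0$ in $u''\le 2u'+\kappa$ would only give $X\cdot\varphi-2\varphi\le C+C\log f$, not a uniform bound.
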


\begin{proof}
The normalisation \eqref{normal-f} yields the identity $\Delta_{g_{\varphi}}f_{\varphi}-X\cdot f_{\varphi}+2f_{\varphi}=0$. 
This identity, together with \eqref{equ:1} and Theorem \ref{prop-sum-sgrs}(i), leads us to the inequality
\begin{equation*}
\begin{split}
0\leq R_{g_{\varphi}}&=2n-\Delta_{g_{\varphi}}f_{\varphi}\\
&=2n-X\cdot f_{\varphi}+2f_{\varphi}\\
&=\left(2n+2f-X\cdot f\right)-\frac{X}{2}\cdot X\cdot\varphi+X\cdot\varphi\\
%+2f_{\varphi}-X\cdot f_{\varphi}=(2n+2f-X\cdot f)+X\cdot \varphi-\frac{X}{2}\cdot X\cdot \varphi.
\end{split}
\end{equation*}
Next, by Proposition \ref{mainprop}(ii), it is clear that $2n+2f-X\cdot f$ vanishes identically outside a compact subset of $M$.
Thus, there exists $C>0$ such that $X\cdot X\cdot \varphi\leq 2X\cdot \varphi$ on the set $\{f\geq C\}$. This gives the first claim of the proposition.
Now, this differential inequality can be viewed as a first order linear ordinary differential inequality in $X\cdot \varphi$. By integrating along the flow lines of $X$, one sees that $X\cdot \varphi\leq Cf$ for $f$ large enough. By integrating once again along the flow lines of $X$, we further find that $\varphi\leq Cf$ for $f$ large enough. This yields the two claimed consequences.

Finally, the last statement is a straightforward consequence of the fact that $f_{\varphi}=f+\frac{X}{2}\cdot\varphi$, combined with the previously attained upper bound on $X\cdot \varphi$.
\end{proof}

Combining Propositions \ref{prop-0-t-1} and \ref{prop-1-t-1}, we arrive at:
\begin{corollary}\label{coro-dom-varphi-X-der}
Let $\varphi$ satisfy \eqref{cmaa}. Then there exist positive constants $C,\,C'>0$ such that\linebreak $|\varphi|+|X\cdot\varphi|+|f_{\varphi}|\leq Cf+C'$ on $M$. Moreover, there exist positive constants $C,\,C'>0$ such that $-C\leq f_{\varphi}\leq Cf+C'$ on $M$. 
\end{corollary}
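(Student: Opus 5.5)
The corollary follows by assembling the bounds of Propositions \ref{prop-0-t-1} and \ref{prop-1-t-1}. The one bound these do not supply directly is a lower bound on $\varphi$ itself, and I would obtain it by integrating the lower bound on $X\cdot\varphi$ along the flow of $X$. Throughout, I use that $f$ is bounded from below by Proposition \ref{mainprop}(ii). Hence, after enlarging $C'$, any constant $C$ is dominated by $Cf+C'$, so one-sided constant bounds can be absorbed freely.

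\emph{The bounds on $X\cdot\varphi$ and $f_{\varphi}$.} Proposition \ref{prop-0-t-1} gives $X\cdot\varphi\geq -2f-C$. Proposition \ref{prop-1-t-1} gives $X\cdot\varphi\leq Cf+C'$. Together these yield $|X\cdot\varphi|\leq Cf+C'$. For $f_{\varphi}$, the proof of Proposition \ref{prop-0-t-1} shows $f_{\varphi}\geq f_{\varphi}(p)=f(p)\geq -C$, and Proposition \ref{prop-1-t-1} gives $f_{\varphi}\leq Cf+C'$. This proves the second assertion, and it also gives $|f_{\varphi}|\leq Cf+C'$. Proposition \ref{prop-1-t-1} further gives the upper bound $\varphi\leq Cf+C'$.

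\emph{The lower bound on $\varphi$ (main step).} I would argue as follows. By Proposition \ref{mainprop}(ii), outside a compact set $K$ we have $f=\pi^{*}(\frac{r^{2}}{2}-n)$ and $X=r\partial_{r}$. Hence $X\cdot f=2f+2n>0$ there. Fix $R$ large so that $\{f\leq R\}\supseteq K$; this set is compact since $f$ is proper. Then every point $x\in\{f>R\}$ is reached by a unique forward flow line $\gamma$ of $X$ with $\gamma(0)\in\{f=R\}$ and $\gamma(t)=x$ for some $t>0$. Along $\gamma$ we have
\begin{equation*}
\frac{d}{ds}\varphi(\gamma(s))=(X\cdot\varphi)(\gamma(s))\geq -2f(\gamma(s))-C
\qquad\textrm{and}\qquad
\frac{d}{ds}f(\gamma(s))=2f(\gamma(s))+2n.
\end{equation*}
From the second equation, $\int_{0}^{t}(2f+C)\,ds\leq C''\int_{0}^{t}\frac{d}{ds}f\,ds\leq C''f(x)$, using $f\geq R>0$ along $\gamma$. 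Integrating the first inequality then gives $\varphi(x)\geq \min_{\{f=R\}}\varphi-C''f(x)$. On the compact set $\{f\leq R\}$, $\varphi$ is bounded. Altogether this yields $\varphi\geq -Cf-C'$ on $M$, and combined with the upper bound, $|\varphi|\leq Cf+C'$.

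Summing the three bounds finishes the proof. The only delicate point is the flow-line integration above, and it is harmless here because $f$ is exactly quadratic in $r$ outside a compact set.
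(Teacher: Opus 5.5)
Your proof is correct and is essentially the paper's argument, which just assembles Propositions \ref{prop-0-t-1} and \ref{prop-1-t-1}. The lower bound on $\varphi$ does follow directly from those propositions, though. The inequality $X\cdot\varphi-2\varphi\leq C$ in Proposition \ref{prop-1-t-1}, combined with $X\cdot\varphi\geq -2f-C$ from Proposition \ref{prop-0-t-1}, gives $2\varphi\geq X\cdot\varphi-C\geq -2f-2C$ in one line, so your flow-line integration is valid but unnecessary.
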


\subsection{Bounds on moments} 
We begin with an auxiliary result that will prove useful in this section.
\begin{lemma}\label{lemma-moments}
For the shrinking gradient K\"ahler-Ricci soliton $(M,\,\omega_{\varphi},\,X)$ with $X=\nabla^{g_{\varphi}}f_{\varphi}$ for $f_{\varphi}:=f+\frac{X}{2}\cdot\varphi$, let $u:M\rightarrow\mathbb{R}$ be a nonnegative $C^1_{\operatorname{loc}}$-function and let $\psi:\R\rightarrow[0,\,1]$ be a smooth cut-off function with $\psi\equiv 1$ on $(-\infty,1]$, $\psi\equiv 0$ on $[2,+\infty)$, and with $\psi'\leq 0$. Set $\psi_R(x):=\psi\left(\frac{f_{\varphi}(x)}{R}\right)$ for $R>0$. Then 
\begin{equation*}
\begin{split}
2\int_{M}f_{\varphi}u\psi_R^2\,e^{-f_{\varphi}}\omega_{\varphi}^n&\leq \int_M|X\cdot u|\psi_R^2\,e^{-f_{\varphi}}\omega_{\varphi}^n.
\end{split}
\end{equation*}
 \end{lemma}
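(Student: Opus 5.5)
The plan is a single weighted integration by parts against the measure $e^{-f_{\varphi}}\omega_{\varphi}^n$, using the normalisation \eqref{normal-f} of the soliton potential. This normalisation was already applied to $f_{\varphi}$ in the proof of Proposition \ref{prop-1-t-1}, and it gives
$$\Delta_{g_{\varphi}}f_{\varphi}-X\cdot f_{\varphi}=-2f_{\varphi}.$$
Here $\Delta_{g_{\varphi}}$ is understood with the convention of Lemma \ref{id-sol-egs} and \eqref{normal-f}, so that $\Delta_{g_{\varphi}}h-X\cdot h$ is the drift Laplacian. The point is that this operator is symmetric with respect to $e^{-f_{\varphi}}\omega_{\varphi}^n$:
$$\operatorname{div}_{g_{\varphi}}\bigl(e^{-f_{\varphi}}\nabla^{g_{\varphi}}h\bigr)=e^{-f_{\varphi}}\bigl(\Delta_{g_{\varphi}}h-X\cdot h\bigr)$$
whenever $X=\nabla^{g_{\varphi}}f_{\varphi}$. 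If the paper's Laplacian differs from the Riemannian one by a harmless factor, I would adjust constants so that \eqref{normal-f} is read in the Riemannian sense, which is the sense in which Lemma \ref{id-sol-egs} is stated.

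The test function is $u\psi_R^2$. Since $f_{\varphi}$ is proper by Theorem \ref{prop-sum-sgrs}(ii), the cut-off $\psi_R$ has compact support, so $u\psi_R^2$ is a compactly supported $C^1$ function. The divergence theorem (valid for $C^1$ functions by approximation) then gives
$$2\int_M f_{\varphi}u\psi_R^2\,e^{-f_{\varphi}}\omega_{\varphi}^n=-\int_M u\psi_R^2\,(\Delta_{g_{\varphi}}f_{\varphi}-X\cdot f_{\varphi})\,e^{-f_{\varphi}}\omega_{\varphi}^n=\int_M g_{\varphi}\bigl(\nabla(u\psi_R^2),\,\nabla f_{\varphi}\bigr)\,e^{-f_{\varphi}}\omega_{\varphi}^n.$$

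Next I expand the gradient. Using $\nabla f_{\varphi}=X$ and $\nabla\psi_R=R^{-1}\psi'(f_{\varphi}/R)X$, the right-hand side becomes
$$\int_M (X\cdot u)\,\psi_R^2\,e^{-f_{\varphi}}\omega_{\varphi}^n+\frac{2}{R}\int_M u\,\psi_R\,\psi'\!\left(\frac{f_{\varphi}}{R}\right)|X|^2_{g_{\varphi}}\,e^{-f_{\varphi}}\omega_{\varphi}^n.$$
The second integral is $\leq 0$, because $u\geq 0$, $\psi_R\geq 0$ and $\psi'\leq 0$. It can therefore be discarded. Bounding $X\cdot u$ by $|X\cdot u|$ in the first integral then gives the asserted inequality.

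I do not expect a real obstacle in this argument. The only points needing care are the following.
\begin{enumerate}
\item The normalisation \eqref{normal-f} must be the one in force for $f_{\varphi}$. This is consistent with its use in the proof of Proposition \ref{prop-1-t-1}.
\item The Laplacian convention must match the weighted divergence identity above.
\item Compact support of $\psi_R$ needs properness of $f_{\varphi}$. This guarantees there are no boundary terms and that all integrals are finite, even though nothing is assumed about the growth of $u$.
\end{enumerate}
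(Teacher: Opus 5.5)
Your proof is correct and is essentially the paper's argument. The paper also integrates by parts against $e^{-f_{\varphi}}\omega_{\varphi}^n$, though it starts from $-\int_M (X\cdot u)\psi_R^2\,e^{-f_{\varphi}}\omega_{\varphi}^n$ and moves the derivative onto $\psi_R^2\nabla^{g_{\varphi}}e^{-f_{\varphi}}$; like you, it then uses \eqref{normal-f} to replace $\Delta_{g_{\varphi},\,X}f_{\varphi}$ by $-2f_{\varphi}$ and discards the nonpositive cut-off term $\frac{2}{R}\int_M\psi_R\,\psi'(f_{\varphi}/R)\,|X|^2_{g_{\varphi}}\,u\,e^{-f_{\varphi}}\omega_{\varphi}^n$, with compact support of $\psi_R$ coming from properness of $f_{\varphi}$.
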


\begin{proof}
First note that since $\psi_R$ is compactly supported by Theorem \ref{prop-sum-sgrs}(ii), integration by parts gives us that
\begin{equation}
\begin{split}
-\int_M(X\cdot u)\psi_R^2\,e^{-f_{\varphi}}\omega_{\varphi}^n&=\int_Mg_{\varphi}\left(\psi_R^2\nabla^{g_{\varphi}}(e^{-f_{\varphi}}),\nabla^{g_{\varphi}}u\right)\,\omega_{\varphi}^n \\
&=-\int_M\operatorname{div}_{g_{\varphi}}\left(\psi_R^2\,\nabla^{g_{\varphi}}e^{-f_{\varphi}}\right)u\,\omega_{\varphi}^n\\
&=\int_M(X\cdot\psi_R^2)u\,e^{-f_{\varphi}}\omega_{\varphi}^n-\int_M\psi_R^2u(\Delta_{g_{\varphi}}e^{-f_{\varphi}})\,\omega_{\varphi}^n \\
&=\frac{2}{R}\int_M\underbrace{\psi_R\psi'\left(\frac{f_{\varphi}(x)}{R}\right)(X\cdot f_{\varphi})u}_{\leq\, 0}\,e^{-f_{\varphi}}\omega_{\varphi}^n+\int_M\psi_R^2u\Delta_{g_{\varphi},\,X}f_{\varphi}\,e^{-f_{\varphi}}\omega_{\varphi}^n.
\end{split}
\end{equation}
From \eqref{normal-f}, we therefore find that
\begin{equation*}
\begin{split}
\int_M2 f_{\varphi}u\psi_R^2\,e^{-f_{\varphi}}\omega_{\varphi}^n&\leq \int_M|X\cdot u|\psi_R^2\,e^{-f_{\varphi}}\omega_{\varphi}^n,
\end{split}
\end{equation*}
as claimed. 
\end{proof}

Using this lemma, we present the uniform bounds on the moments of $f$ and other quantities.
\begin{prop}\label{prop-bds-moments}
Let $\varphi$ satisfy \eqref{cmaa}. Then $\int_M|f|^k\,e^{-f_{\varphi}}\omega_{\varphi}^n<\infty$ for all $k\geq 0$.
In particular, $\int_M\left(|\varphi|^k+|X\cdot \varphi|^k\right)\,e^{-f_{\varphi}}\omega_{\varphi}^n<\infty$ and $\int_M\tr_{\omega_{\varphi}}\omega\,e^{-f_{\varphi}}\omega_{\varphi}^n<\infty$
for all $k\geq 0$. 
\end{prop}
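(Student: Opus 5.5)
The plan is to prove the moment bound by induction on $k$, using Lemma \ref{lemma-moments} with $u$ a power of $f$, and then to deduce the remaining integrability statements from Corollary \ref{coro-dom-varphi-X-der} and an integration by parts. Throughout, $\int$ means integration against $e^{-f_{\varphi}}\omega_{\varphi}^{n}$ on $M$.

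\emph{Base case $k=0$.} Finiteness of $\int_M e^{-f_{\varphi}}\omega_{\varphi}^n$ follows from Theorem \ref{prop-sum-sgrs}(ii)--(iii) applied to the soliton $(M,\,g_{\varphi},\,X)$. Indeed, $f_{\varphi}\geq \frac14(d_{g_\varphi}(p,\cdot)-c_1)^2-C$ while $\operatorname{vol}_{g_\varphi}(B_r(p))\leq Cr^{2n}$. Summing over dyadic annuli then gives a convergent series.

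\emph{Inductive step.} Fix $C_0$ with $f+C_0\geq 1$ on $M$, which is possible since $f$ is bounded below by Proposition \ref{mainprop}(ii). Apply Lemma \ref{lemma-moments} with $u:=(f+C_0)^k\geq 0$. The first estimate of \eqref{bds-cov-der-f} gives $|X\cdot f-2f|\leq C$, hence
\[
|X\cdot u|=k(f+C_0)^{k-1}|X\cdot f|\leq 2k(f+C_0)^k+C_k(f+C_0)^{k-1}.
\]
Inserting this into the lemma yields
\[
\int_M 2(f_{\varphi}-k)(f+C_0)^k\psi_R^2\,e^{-f_\varphi}\omega_\varphi^n\leq C_k\int_M (f+C_0)^{k-1}\psi_R^2\,e^{-f_\varphi}\omega_\varphi^n .
\]
Split $M$ into $K_k:=\{f_{\varphi}\leq k+1\}$ and its complement. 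By properness of $f_\varphi$, the set $K_k$ is compact, so $f$ is bounded on it and the (possibly negative) contribution of $K_k$ to the left-hand side is bounded independently of $R$. On $M\setminus K_k$ we have $2(f_\varphi-k)\geq 2$. We therefore obtain
\[
\int_M (f+C_0)^k\psi_R^2\,e^{-f_\varphi}\omega_\varphi^n\leq C_k\Big(1+\int_M (f+C_0)^{k-1}\,e^{-f_\varphi}\omega_\varphi^n\Big),
\]
uniformly in $R$. Letting $R\to\infty$ by monotone convergence (note that $\psi_R\uparrow 1$) and using the induction hypothesis gives the claim for all integers $k$. Non-integer $k$ then follow by interpolation, since $f$ is bounded below.

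\emph{Consequences.} The bounds on $\int_M(|\varphi|^k+|X\cdot\varphi|^k)\,e^{-f_\varphi}\omega_\varphi^n$ follow immediately from Corollary \ref{coro-dom-varphi-X-der}, because $|\varphi|+|X\cdot\varphi|\leq Cf+C'$.

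For the trace term, write $\operatorname{tr}_{\omega_\varphi}\omega=n-\Delta_{\omega_\varphi}\varphi\geq 0$ and work with the cut-off $\psi_R^2$. Recall $\Delta_{\omega_\varphi}\varphi=\Delta_{g_\varphi,X}\varphi+X\cdot\varphi$. Integrating the drift Laplacian by parts against the weight gives
\[
\int_M(\Delta_{g_\varphi,X}\varphi)\psi_R^2\,e^{-f_\varphi}\omega_\varphi^n=-\frac{2}{R}\int_M\psi_R\,\psi'\!\left(\tfrac{f_\varphi}{R}\right)(X\cdot\varphi)\,e^{-f_\varphi}\omega_\varphi^n .
\]
The right-hand side is controlled by the first moment of $|X\cdot\varphi|$. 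Hence $\int_M \operatorname{tr}_{\omega_\varphi}\omega\,\psi_R^2\,e^{-f_\varphi}\omega_\varphi^n$ is bounded by $n\int_M e^{-f_\varphi}\omega_\varphi^n$ plus moments of $|X\cdot\varphi|$, uniformly in $R$. Since $\operatorname{tr}_{\omega_\varphi}\omega\geq 0$, Fatou's lemma completes the argument.

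The main obstacle is the inductive step. A priori the coefficient $f_\varphi$ on the left-hand side is not comparable from below to $f$, so it cannot absorb $2k(f+C_0)^k$ directly. The device is to use only that $f_\varphi\to\infty$, via the compactness of its sublevel sets, so that $f_\varphi-k\geq 1$ off a compact set.
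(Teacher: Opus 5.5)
Your proof is correct and follows the paper's argument: apply Lemma \ref{lemma-moments} to a power of $f$ and use $|X\cdot f-2f|\leq C$. You then absorb into $2f_\varphi$ off a compact sublevel set of $f_\varphi$, using only properness of $f_\varphi$, and treat the trace term by the same weighted integration by parts against $\psi_R^2$. The one difference is that the paper skips your induction: it bounds $|X\cdot f^{2k}|\leq C(k)(f^{2k}+1)$ and absorbs the whole $C(k)f^{2k}$ term at once on $\{f_\varphi\geq C(k)\}$, leaving only the weighted volume $\int_M e^{-f_\varphi}\omega_\varphi^n$ on the right, whose finiteness you justify explicitly while the paper uses it implicitly.
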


\begin{remark}
From the proof of Proposition \ref{prop-bds-moments}, one can also obtain the following more general exponential integrability estimate, provided that there exists $c>0$ such that on $M$, $c\max\{f,0\}\leq \max\{f_{\varphi},0\}$:
$$\int_Me^{\alpha f}\,e^{-f_{\varphi}}\omega_{\varphi}^n<\infty\qquad\textrm{for all $\alpha<c$}.$$
\end{remark}

\begin{proof}
Let $\psi_R$ be as in Lemma \ref{lemma-moments}. We first apply Lemma \ref{lemma-moments} to $u=f^{2k}$ with $k\geq 1$ to obtain for all $R>0$, the bound
\begin{equation*}
\begin{split}
\int_M2f_{\varphi}f^{2k}\psi_R^2\,e^{-f_{\varphi}}\omega_{\varphi}^n&\leq \int_M|X\cdot f^{2k}|\psi_R^2\,e^{-f_{\varphi}}\omega_{\varphi}^n\\
&=2k\int_M|X\cdot f||f|^{2k-1}\psi_R^2\,e^{-f_{\varphi}}\omega_{\varphi}^n\\
&\leq C(k)\int_M\left(f^{2k}+1\right)\psi_R^2\,e^{-f_{\varphi}}\omega_{\varphi}^n,
\end{split}
\end{equation*}
where we use the properties of $f$ given in Proposition \ref{mainprop}(ii) in the last line.
Rearranging, we see that
\begin{equation}\label{lamal}
\begin{split}
\int_M(2f_{\varphi}-C(k))f^{2k}\psi_R^2\,e^{-f_{\varphi}}\omega_{\varphi}^n\leq C(k)\int_M\psi_R^2\,e^{-f_{\varphi}}\omega_{\varphi}^n.
\end{split}
\end{equation}
Thus, it follows that
\begin{equation*}
\begin{split}
C(k)\int_{\{f_{\varphi}\,\geq\,C(k)\}}f^{2k}\psi_R^2\,e^{-f_{\varphi}}\omega_{\varphi}^n&\leq
\int_{\{f_{\varphi}\,\geq\,C(k)\}}(2f_{\varphi}-C(k))f^{2k}\psi_R^2\,e^{-f_{\varphi}}\omega_{\varphi}^n\\
&=
\int_{M}(2f_{\varphi}-C(k))f^{2k}\psi_R^2\,e^{-f_{\varphi}}\omega_{\varphi}^n+\int_{\{f_{\varphi}\,\leq\,C(k)\}}(C(k)-2f_{\varphi})f^{2k}\psi_R^2\,e^{-f_{\varphi}}\omega_{\varphi}^n\\
&\leq C(k)\int_{\{f_{\varphi}\,\leq\,C(k)\}}f^{2k} \,\psi_R^2\,e^{-f_{\varphi}}\omega_{\varphi}^n+C(k)\int_M\psi_R^2\,e^{-f_{\varphi}}\omega_{\varphi}^n\\
&\leq C(k)\int_{\{f_{\varphi}\,\leq\,C(k)\}}f^{2k} \,e^{-f_{\varphi}}\omega_{\varphi}^n+C(k)\int_M\,e^{-f_{\varphi}}\omega_{\varphi}^n,
\end{split}
\end{equation*}
where we have used the lower bound on $f_{\varphi}$ given in Theorem \ref{prop-sum-sgrs}(ii) and \eqref{lamal} in the third line, and the bound $\Psi_{R}\leq1$ in the last line.
We have also increased $C(k)$ where necessary. The desired integrability result now follows from the fact that $f_{\varphi}$ is proper and the fact that
the upper bound is independent of $R>0$. The assertion that $|\varphi|^{k},\,|X\cdot\varphi|^{k}\in L^1(e^{-f_{\varphi}}\omega_{\varphi}^n)$ 
for all $k\geq 0$ is a consequence of our bounds on the moments of $f$ and Corollary \ref{coro-dom-varphi-X-der}.

Finally, regarding the last assertion of the proposition, observe from an integration by parts that
\begin{equation*}
\begin{split}
\int_M(\tr_{\omega_{\varphi}}\omega)\psi_R^2\,e^{-f_{\varphi}}\omega_{\varphi}^n&=\int_M\left(n-\Delta_{\omega_{\varphi}}\varphi\right)\psi_R^2\,e^{-f_{\varphi}}\omega_{\varphi}^n\\
&=\int_M\left(n-\frac{X}{2}\cdot \varphi\right)\psi_R^2\,e^{-f_{\varphi}}\omega_{\varphi}^n-\int_M(\Delta_{\omega_{\varphi},\,X}\varphi)\psi_R^2\,e^{-f_{\varphi}}\omega_{\varphi}^n\\
&=\int_M\left(n-\frac{X}{2}\cdot \varphi\right)\,\psi_R^2\,e^{-f_{\varphi}}\omega_{\varphi}^n+\frac{2}{R}\int_M\psi_R\psi'\left(\frac{f_{\varphi}}{R}\right)\left(\frac{X}{2}\cdot \varphi\right)\,e^{-f_{\varphi}}\omega_{\varphi}^n,
\end{split}
\end{equation*}
which is bounded above independently of $R>1$ because, as previously deduced, $X\cdot\varphi\in L^1(e^{-f_{\varphi}}\omega_{\varphi}^n)$. This shows that $\tr_{\omega_{\varphi}}\omega\in L^1(e^{-f_{\varphi}}\omega_{\varphi}^n)$.
\end{proof}

\subsection{Intrinsic bounds}
We begin with an $L^{2}$-maximum principle that will prove useful in this section.
\begin{lemma}\label{l2-ppe-max}
For the shrinking gradient K\"ahler-Ricci soliton $(M,\,g_{\varphi},\,X)$ with volume form $d\mu_{g_{\varphi}}$ and with $X=\nabla^{g_{\varphi}}f_{\varphi}$ for $f_{\varphi}:=f+\frac{X}{2}\cdot\varphi$, let $G:\R\rightarrow \R$ be a $C^1$-function such that $\int_M e^{-G(f_{\varphi})}d\mu_{g_{\varphi}}<\infty$, and let $U$ be a smooth function on $\{f_{\varphi}\geq R\}$ such that
\begin{equation}\label{inequ-l2-ppe-max}
\Delta_{g_{\varphi}}U-\nabla^{g_{\varphi}}_{\nabla^{g_{\varphi}}(G(f_{\varphi}))}U\geq V\cdot U
\end{equation}
for some positive potential $V$. If $U\in L^2(e^{-G(f_{\varphi})}d\mu_{g_{\varphi}})$, then $U$ is bounded from above, i.e., there exists a positive constant $C>0$ such that $U\leq C$ on $\{f_{\varphi}\geq R\}$ for some $R>0$.
\end{lemma}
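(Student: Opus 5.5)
The argument is a weighted $L^{2}$ Moser-type iteration, or more simply a De Giorgi/Stampacchia truncation, with respect to the measure $d\nu:=e^{-G(f_{\varphi})}d\mu_{g_{\varphi}}$. The weighted Laplacian $\Delta_{G}:=\Delta_{g_{\varphi}}-\nabla^{g_{\varphi}}_{\nabla^{g_{\varphi}}(G(f_{\varphi}))}$ is symmetric with respect to $d\nu$, so for compactly supported Lipschitz $\eta$ and $w$ we have $\int_M (\Delta_G U) w\,d\nu=-\int_M g_{\varphi}(\nabla U,\nabla w)\,d\nu$. Since $V>0$, \eqref{inequ-l2-ppe-max} gives $\Delta_G U\geq 0$ wherever $U\geq 0$. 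Hence for every $k\geq 0$ the truncation $U_k:=(U-k)_+$ is a weak $\Delta_G$-subsolution on $\{f_{\varphi}>R\}$.

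\textbf{Step 1 (Caccioppoli inequality).} Take $w=\eta^2U_k$ with $\eta$ a cut-off supported in $\{f_{\varphi}> R\}$. The standard Cauchy--Schwarz absorption then yields
$$\int_M\eta^2|\nabla U_k|^2\,d\nu\leq 4\int_M|\nabla\eta|^2U_k^2\,d\nu .$$
Choose $\eta=\eta_1\cdot\psi_\rho$, where $\psi_\rho=\psi(f_{\varphi}/\rho)$ is the cut-off of Lemma \ref{lemma-moments}, and $\eta_1$ vanishes near $\{f_{\varphi}=R\}$ and equals $1$ on $\{f_{\varphi}\geq 2R\}$. Since $|\nabla f_{\varphi}|^2\leq f_{\varphi}+C$ by \eqref{equ:3} and $R_{g_{\varphi}}\geq 0$, we get $|\nabla\psi_\rho|\leq C\rho^{-1/2}$ on its support. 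Because $U\in L^2(d\nu)$, the contribution of $\psi_\rho$ tends to $0$ as $\rho\to\infty$. So $\eta$ can effectively be taken to localise only near the inner boundary, and $\nabla U_k\in L^2(d\nu)$ on $\{f_{\varphi}\geq 2R\}$.

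\textbf{Step 2 (boundedness).} This is the main obstacle: passing from an $L^2$ bound to a pointwise upper bound with no Sobolev inequality for $d\nu$ available globally. I would use the fact that the estimate is needed only on $\{f_{\varphi}\geq R'\}$, combined with local Moser iteration on unit balls. The soliton $g_{\varphi}$ has no curvature bound, but on a ball $B_x(1)$ the weight $G(f_{\varphi})$ and its gradient are controlled in terms of $f_{\varphi}(x)$, and local Sobolev constants are available on balls of small radius. If this only gives a bound growing with $x$, I would instead exploit the potential $V$. Use $w=\eta^2U_k$ in the inequality itself rather than only the subsolution property, which gives
$$\int_M V\eta^2U_k^2\,d\nu\leq 4\int_M|\nabla\eta|^2U_k^2\,d\nu .$$
With $\eta=\eta_1$ this shows that $U_k\equiv 0$ on $\{f_{\varphi}\geq 2R\}$ once the right-hand side, which is supported in the annulus $\{R\leq f_{\varphi}\leq 2R\}$, is made to vanish. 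That happens if $k\geq\sup_{\{R\leq f_{\varphi}\leq 2R\}}U$, which is finite because this annulus is compact by properness of $f_{\varphi}$ and $U$ is smooth there.

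\textbf{Step 3 (rigorous version of Step 2).} Take $k:=\max_{\{R\leq f_{\varphi}\leq 2R\}}U_+$ and $\eta=\psi_\rho$ restricted to $\{f_{\varphi}\geq 2R\}$, extended by zero. Then $U_k$ vanishes on $\{f_{\varphi}=2R\}$, so $w=\psi_\rho^2U_k$ is an admissible test function with no inner boundary term, and
$$\int V\psi_\rho^2U_k^2\,d\nu+\int\psi_\rho^2|\nabla U_k|^2\,d\nu\leq C\rho^{-1}\int_{\{\rho\leq f_{\varphi}\leq 2\rho\}}U^2\,d\nu\to 0 .$$
Since $V>0$, this forces $U_k\equiv 0$, that is, $U\leq k=:C$ on $\{f_{\varphi}\geq 2R\}$. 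This proves the lemma with $R$ replaced by $2R$. (If the hypothesis is intended to allow $U$ of either sign, the same argument applies because only $U_k\geq 0$ is tested.) In fact Step 3 is the whole proof, and Step 2 is unnecessary.
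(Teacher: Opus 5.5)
Your Step 3 is correct and is essentially the paper's argument: truncate to $(U-k)_{+}$ at a level $k$ that dominates $U$ near the inner boundary, test the inequality against this truncation, and use $V>0$ to force it to vanish. The one difference is in your favour. The paper integrates by parts directly over the noncompact set $\{f_{\varphi}\geq R\}$, with a smooth level set $\{f_{\varphi}=R\}$ as inner boundary, and silently discards any contribution from infinity; your cutoff $\psi_{\rho}$, with $|\nabla\psi_{\rho}|\leq C\rho^{-1/2}$ (from $|X|^{2}_{g_{\varphi}}\leq 2f_{\varphi}+2n$) together with $U\in L^{2}(e^{-G(f_{\varphi})}d\mu_{g_{\varphi}})$, actually justifies that step, and choosing $k$ on the compact annulus $\{R\leq f_{\varphi}\leq 2R\}$ removes any need for regularity of the level set.
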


\begin{remark}
In practice, Lemma \ref{l2-ppe-max} will be applied to the function $G(x):=x+\beta_1 \log x+\beta_2x^{-1}$ for $x>0$, with coefficients $\beta_1,\,\beta_2\in\mathbb{R}$.
\end{remark}

\begin{proof}[Proof of Lemma \ref{l2-ppe-max}]
First observe that for any nonnegative constant $C$, the function $U-C$ satisfies \eqref{inequ-l2-ppe-max} since the potential $V$ is positive.
Choose $R>0$ sufficiently large so that the level set $\{f_{\varphi}= R\}$ is a smooth compact hypersurface.
This is possible because the zero set of $X$ is compact \cite[Proposition 3.5]{JunshengSong} and $f_{\varphi}$ is proper by Theorem \ref{prop-sum-sgrs}(ii). 
Next, choose a constant $C$ such that $\max_{\{f_{\varphi}= R\}}U\leq C$ and define the function $u:=(U-C)_{+}=\max\{U-C,\,0\}$. This function is locally Lipschitz, zero on $\{f_{\varphi}= R\}$, 
everywhere nonnegative, and lies in $L^2(e^{-G(f_{\varphi})}d\mu_{g_{\varphi}})$ by virtue of the fact that $\int_Me^{-G(f_{\varphi})}\,d\mu_{g_{\varphi}}<\infty$ by assumption. An integration by parts on $\{f_{\varphi}\geq R\}$ then gives us that
\begin{equation*}
\begin{split}
0&=\int_{\{f_{\varphi}\,=\, R\}}u\nabla_{\textbf{n}}u\,e^{-G(f_{\varphi})}d\sigma_{g_{\varphi}}=\int_{\{f_{\varphi}\geq R\}}\operatorname{div}_{g_{\varphi}}\left(ue^{-G(f_{\varphi})}\nabla^{g_{\varphi}}u\right)\,d\mu_{g_{\varphi}}\\
&=\int_{\{f_{\varphi}\geq R\}}\left(|\nabla^{g_\varphi} u|^2_{g_\varphi} + u \left( \Delta_{g_\varphi} u - \nabla^{g_\varphi}_{\nabla^{g_\varphi} G(f_\varphi)} u \right) \right) \,e^{-G(f_\varphi)}d\mu_{g_\varphi}\\
&\geq\int_{\{f_{\varphi}\,\geq\, R\}}\left(|\nabla^{g_{\varphi}}u|^2_{g_{\varphi}}+Vu^2\right)\,e^{-G(f_{\varphi})}d\mu_{g_{\varphi}}\geq 0,
\end{split}
\end{equation*}
where $\textbf{n}$ denotes the unit outward pointing normal to the level set $\{f_{\varphi}= R\}$ and $d\sigma_{g_{\varphi}}$ denotes the $(2n-1)$-Hausdorff measure induced by the Riemannian metric $g_{\varphi}$ on $\{f_{\varphi}= R\}$. From this, the conclusion follows.
\end{proof}

We use the previous lemma to prove the following lower bound on $\varphi$.
\begin{prop}[{Lower intrinsic bound on $\varphi$}]\label{prop-low-bd-phi}
Let $\varphi$ satisfy \eqref{cmaa}. Then there exists a positive constant $C>0$ such that $\varphi\geq -C(f_{\varphi}+1)$ on $M$.
\end{prop}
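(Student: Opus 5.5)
The plan is to apply the $L^{2}$-maximum principle, Lemma \ref{l2-ppe-max}, with weight $G(x)=x$ (so the drift is exactly $X$ and $e^{-G(f_{\varphi})}d\mu_{g_{\varphi}}=e^{-f_{\varphi}}\omega_{\varphi}^{n}$) to a barrier of the form
$$U:=-\varphi-a f-b f_{\varphi},$$
with constants $a\geq 0$ and $b>0$ to be chosen. We aim to show that $U$ satisfies $\Delta_{g_{\varphi},X}U\geq V\cdot U$ on $\{f_{\varphi}\geq R\}$ with $V\equiv 2$, at least wherever $U>0$, which is all the proof of Lemma \ref{l2-ppe-max} uses since it only tests against $(U-C)_{+}$. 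Given this, $U\leq C$, so $\varphi\geq -af-bf_{\varphi}-C$. The final step would then be to upgrade the $f$ on the right to $f_{\varphi}$.

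The differential inequality rests on three ingredients.
\begin{enumerate}
\item Positivity of $\omega$ gives $\tr_{\omega_{\varphi}}\omega=n-\Delta_{\omega_{\varphi}}\varphi>0$, hence $\Delta_{\omega_{\varphi}}\varphi<n$.
\item Since $i\partial\bar\partial f\leq C_{0}\omega$ on $M$ (by \eqref{bds-cov-der-f}, $\nabla^{g,2}f=g+O(f^{-1})$), we get $\Delta_{\omega_{\varphi}}f\leq C_{0}\tr_{\omega_{\varphi}}\omega=C_{0}(n-\Delta_{\omega_{\varphi}}\varphi)$.
\item The normalisation \eqref{normal-f} gives $\Delta_{g_{\varphi},X}f_{\varphi}=-2f_{\varphi}$.
\end{enumerate}
With these we compute $\Delta_{g_{\varphi},X}U$, using $X\cdot\varphi=2f_{\varphi}-2f$ and $X\cdot f=2f+O(1)$. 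In the resulting expression, $\Delta_{\omega_{\varphi}}\varphi$ can be eliminated (with the favourable sign) through (i) and (ii) once $a\leq C_{0}^{-1}$. What remains is an expression linear in $\varphi$, $f$ and $f_{\varphi}$. The comparison with $2U=-2\varphi-2af-2bf_{\varphi}$ then reduces to choosing $a,b$ so that the $f$-coefficients and $f_{\varphi}$-coefficients have the right sign, up to a bounded error. That error is absorbed by replacing $U$ with $U-C$, which is harmless because $V>0$.

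The integrability hypothesis $U\in L^{2}(e^{-f_{\varphi}}\omega_{\varphi}^{n})$ follows from Corollary \ref{coro-dom-varphi-X-der} and Proposition \ref{prop-bds-moments}. Indeed $|\varphi|$, $|f|$ and $|f_{\varphi}|$ are all bounded by $Cf+C'$, and every moment of $f$ is finite against $e^{-f_{\varphi}}\omega_{\varphi}^{n}$; in particular $\int_{M}e^{-f_{\varphi}}\omega_{\varphi}^{n}<\infty$.

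The main obstacle is the bookkeeping between $f$ and $f_{\varphi}$. We know $f_{\varphi}\leq Cf+C'$, but not a priori the reverse inequality $f\leq Cf_{\varphi}+C'$. Consequently the terms $-2f$ coming from $X\cdot\varphi$ must be balanced against the $+2af$ produced by $X\cdot f$ together with the zeroth order term, and the conclusion must be stated in terms of $f_{\varphi}$ rather than $f$. If a single choice of $(a,b)$ does not close, I would first try $a=0$, i.e.\ $U=-\varphi-bf_{\varphi}$, using only (i) and (iii). The sign issue then sits entirely in the term $X\cdot\varphi$, and I would handle it on the set $\{U>0\}$, where $-\varphi>bf_{\varphi}$. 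There the upper bound $X\cdot\varphi\leq 2\varphi+C$ of Proposition \ref{prop-1-t-1} ties $X\cdot\varphi$ to $\varphi$ itself. Failing that, I would replace the weight by $G(x)=x+\beta_{1}\log x+\beta_{2}x^{-1}$, as suggested in the remark following Lemma \ref{l2-ppe-max}, to gain lower order room.
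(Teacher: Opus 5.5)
There is a genuine gap. Your ingredients (i)--(iii) and the integrability check are fine. The central step, the inequality $\Delta_{g_{\varphi},\,X}U\geq 2U$ on $\{U>C\}$, cannot be derived, and your bookkeeping never balances the coefficient of $\varphi$. Take $a=0$, so $U=-\varphi-bf_{\varphi}$. Combine \eqref{normal-f}, $\Delta_{g_{\varphi}}\varphi=2n-2\tr_{\omega_{\varphi}}\omega$, and $X\cdot\varphi=2\log(\omega_{\varphi}^{n}/\omega^{n})-2F+2\varphi$ from \eqref{cmaa}. This gives the exact identity
\begin{equation*}
\Delta_{g_{\varphi},\,X}U-2U=-4U-2F+2\Big(\log\frac{\omega_{\varphi}^{n}}{\omega^{n}}+\tr_{\omega_{\varphi}}\omega-n\Big).
\end{equation*}
So on $\{U>C\}$ the nonnegative AM--GM defect in the bracket would have to be at least $2U+F$, i.e.\ large exactly where $U$ is large. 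Nothing you have gives this. A posteriori the defect is even bounded by Proposition \ref{equiv-met-prop-C^2}, so the required inequality is no easier than the statement itself.

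Your fallbacks do not repair this.
\begin{itemize}
\item With (i) alone you get $\Delta_{g_{\varphi},\,X}U-2U>(X\cdot\varphi-2\varphi)-4U-2n$. Proposition \ref{prop-1-t-1} bounds $X\cdot\varphi-2\varphi$ from above, which is the wrong direction. The lower bound is Proposition \ref{prop-2-t-1}, whose proof uses the present proposition, and it would still leave $-4U-C$.
\item Adding $-af$ via (ii) reproduces the same $-4U$, only with a smaller coefficient on $\tr_{\omega_{\varphi}}\omega$.
\item Changing $G$ to $x+\beta_{1}\log x+\beta_{2}x^{-1}$ only adds the drift $-(\beta_{1}f_{\varphi}^{-1}-\beta_{2}f_{\varphi}^{-2})X\cdot U$. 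This is $O(1)$ where $f\sim f_{\varphi}$ and cannot absorb $-4U$.
\end{itemize}
The structural reason is that $\Delta_{g_{\varphi},\,X}f_{\varphi}=-2f_{\varphi}$ and $\Delta_{g_{\varphi},\,X}(-\varphi)\geq-2(-\varphi)-2F$. Any barrier of linear growth built from these behaves like $\Delta_{g_{\varphi},\,X}U\approx-2U$, so the zeroth-order term has the wrong sign for Lemma \ref{l2-ppe-max}. Keeping $a>0$ would also make your final upgrade step circular: $f\leq Cf_{\varphi}$ is Corollary \ref{coro-pot-bd}, which depends on this proposition through Proposition \ref{prop-2-t-1}.

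The missing idea is to divide by a power of $f_{\varphi}$ slightly larger than one. This cancels the eigenvalue $-2$ and leaves a small positive potential.
\begin{itemize}
\item Set $\Phi:=\varphi-C$, so that $\Delta_{g_{\varphi},\,X}\Phi+2\Phi\leq 0$, while $f_{\varphi}$ satisfies this with equality.
\item Then $W:=-\Phi/f_{\varphi}^{1+\varepsilon}$ satisfies $\Delta_{g_{\varphi}}W-\nabla^{g_{\varphi}}_{\nabla^{g_{\varphi}}(G(f_{\varphi}))}W\geq VW$, with $G(x)=x-2(1+\varepsilon)\log x$ and $V=\varepsilon\big(2-(1+\varepsilon)f_{\varphi}^{-2}X\cdot f_{\varphi}\big)$. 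Since $X\cdot f_{\varphi}=|X|^{2}_{g_{\varphi}}\leq 2f_{\varphi}+2n$, we have $V>\varepsilon$ for $f_{\varphi}$ large.
\item Because $W^{2}e^{-G(f_{\varphi})}=\Phi^{2}e^{-f_{\varphi}}$, Proposition \ref{prop-bds-moments} gives the integrability. Lemma \ref{l2-ppe-max} then yields $\varphi\geq -C_{\varepsilon}f_{\varphi}^{1+\varepsilon}$.
\item At $\varepsilon=0$ the potential vanishes, so the $\varepsilon$ is removed by a classical minimum principle instead. The function $\Phi/f_{\varphi}+\alpha f_{\varphi}$ is proper by the $\varepsilon=\frac{1}{2}$ bound. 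It is a strict supersolution of the drift operator (with the extra $\log f_{\varphi}$ drift) on $\{f_{\varphi}>R\}$, with $R$ independent of $\alpha$.
\item Hence its minimum over $\{f_{\varphi}\geq R\}$ is attained on $\{f_{\varphi}=R\}$. Letting $\alpha\to 0$ gives $\varphi\geq -C(f_{\varphi}+1)$.
\end{itemize}
This route uses $f_{\varphi}$ alone and never needs to compare $f$ with $f_{\varphi}$.
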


\begin{proof}
Using the fact that $\log(1+x)\leq x$ for all $x>-1$, first observe that $\Delta_{g_{\varphi},\,X}\varphi+2\varphi\leq F\leq2C$ 
for some constant $C>0$ by \eqref{cmaa}. Letting $\Phi:=\varphi-C$, we therefore have that $\Delta_{g_{\varphi},\,X}\Phi+2\Phi\leq0$.
Next, from our normalisation \eqref{normal-f}, for some $R\geq 1$ and $\varepsilon\geq0$ we derive that on $\{f_{\varphi}> R\}$:
\begin{equation}
\begin{split}\label{gal-comp-eps}
\Delta_{g_\varphi, X} \left( \frac{\Phi}{f_\varphi^{1+\varepsilon}} \right) &= \frac{\Delta_{g_\varphi, X}\Phi}{f_\varphi^{1+\varepsilon}} + 2g_\varphi(\nabla^{g_\varphi}\Phi, \nabla^{g_\varphi} f_\varphi^{-1-\varepsilon}) + 2(1+\varepsilon)\frac{\Phi}{f_\varphi^{1+\varepsilon}} + (1+\varepsilon)(2+\varepsilon)\frac{X \cdot f_\varphi}{f_\varphi^2} \cdot \frac{\Phi}{f_\varphi^{1+\varepsilon}} \\
&\leq -2\left(\frac{\Phi}{f_\varphi^{1+\varepsilon}}\right) + 2g_\varphi(\nabla^{g_\varphi}\Phi, \nabla^{g_\varphi} f_\varphi^{-1-\varepsilon}) + 2(1+\varepsilon)\frac{\Phi}{f_\varphi^{1+\varepsilon}} + (1+\varepsilon)(2+\varepsilon)\frac{X \cdot f_\varphi}{f_\varphi^2} \cdot \frac{\Phi}{f_\varphi^{1+\varepsilon}} \\
&= 2g_\varphi \left( \nabla^{g_\varphi} \left(\frac{\Phi}{f_\varphi^{1+\varepsilon}}\right), \nabla^{g_\varphi} \log f_\varphi^{-1-\varepsilon} \right) + 2\varepsilon \left(\frac{\Phi}{f_\varphi^{1+\varepsilon}}\right) \\
&\qquad + \left((1+\varepsilon)(2+\varepsilon) - 2(1+\varepsilon)^2\right) \frac{X \cdot f_\varphi}{f_\varphi^2} \cdot \frac{\Phi}{f_\varphi^{1+\varepsilon}} \\
&= 2g_\varphi \left( \nabla^{g_\varphi} \left(\frac{\Phi}{f_\varphi^{1+\varepsilon}}\right), \nabla^{g_\varphi} \log f_\varphi^{-1-\varepsilon} \right) + \varepsilon \left( 2 - (1+\varepsilon)\frac{X \cdot f_\varphi}{f_\varphi^2} \right) \frac{\Phi}{f_\varphi^{1+\varepsilon}}. 
\end{split}
\end{equation}
In particular, $U:=-\frac{\Phi}{f_{\varphi}^{1+\varepsilon}}$ satisfies assumption \eqref{inequ-l2-ppe-max} of Lemma \ref{l2-ppe-max} with $G(f_{\varphi}):=f_{\varphi}+2\log f_{\varphi}^{-1-\varepsilon}$ defined on the set $f_{\varphi}>0$ for $\varepsilon>0$, and $V:=\varepsilon\left(2-(1+\varepsilon)\frac{X\cdot f_{\varphi}}{f_{\varphi}^2}\right)>\varepsilon>0$ 
on $\{f_{\varphi}\geq R\}$ provided that $R\geq R(\varepsilon)$ is chosen large enough. This is possible since
$X\cdot f_{\varphi}\leq 2f_{\varphi}$
by \eqref{equ:3} and Theorem \ref{prop-sum-sgrs}(i). Moreover, $U\in L^2(e^{-G(f_{\varphi})}\omega_{\varphi}^n)$ thanks to Proposition \ref{prop-bds-moments}. Lemma \ref{l2-ppe-max} now tells us that for all $\varepsilon>0$, there exists $C_{\varepsilon}>0$ such that $\Phi\geq-C_{\varepsilon}f_{\varphi}^{1+\varepsilon}$ on $M$.
% $\varphi\geq C-C_{\varepsilon}f_{\varphi}^{1+\varepsilon}\leq-C_{\varepsilon}f_{\varphi}^{1+\varepsilon}$ on $M$. 
Finally, for $\alpha>0$, \eqref{gal-comp-eps} with $\varepsilon=0$ gives us that
\begin{equation}
\begin{split}\label{ppe-min-alpha}
\Delta_{g_{\varphi},\,X}\left(\frac{\Phi}{f_{\varphi}}+\alpha f_{\varphi}\right)&\leq 2g_{\varphi}\left(\nabla^{g_{\varphi}}\left(\frac{\Phi}{f_{\varphi}}+\alpha f_{\varphi}\right),\nabla^{g_{\varphi}}\log f_{\varphi}^{-1}\right)-2\alpha\left(f_{\varphi}-\frac{X\cdot f_{\varphi}}{f_{\varphi}}\right)\\
&<2g_{\varphi}\left(\nabla^{g_{\varphi}}\left(\frac{\Phi}{f_{\varphi}}+\alpha f_{\varphi}\right),\nabla^{g_{\varphi}}\log f_{\varphi}^{-1}\right),
\end{split}
\end{equation}
provided that $f_{\varphi}>R>1$ for some $R$ independent of $\alpha>0$. Since we
already know that $\frac{\Phi}{f_{\varphi}}\geq -Cf_{\varphi}^{\frac{1}{2}}$ outside a compact subset of $M$, the function $\frac{\Phi}{f_{\varphi}}+\alpha f_{\varphi}$ is proper and bounded from below for all $\alpha>0$. The minimum principle applied to \eqref{ppe-min-alpha} at a minimum point of $\frac{\Phi}{f_{\varphi}}+\alpha f_{\varphi}$ 
now gives us that
\begin{equation*}
\min_{\{f_{\varphi}\,>\,R\}}\left\{\frac{\Phi}{f_{\varphi}}+\alpha f_{\varphi}\right\}=\min_{\{f_{\varphi}\,=\,R\}}\left\{\frac{\Phi}{f_{\varphi}}+\alpha f_{\varphi}\right\},
\end{equation*}
where $R$ is assumed large enough so that the (compact) zero set of $X$ is contained in $\{f_{\varphi}<R\}$.
Since the smooth hypersurface $\{f_{\varphi}=R\}$ is independent of $\alpha>0$, we can let $\alpha$ tend to $0$ to obtain the desired result on $\{f_{\varphi}>R\}$:
 $$\frac{\varphi-C}{f_{\varphi}}=\frac{\Phi}{f_{\varphi}}\geq \min_{\{f_{\varphi}=R\}}\frac{\Phi}{f_{\varphi}}>-\infty.$$
From this, the general result follows.
\end{proof}

We next have:
\begin{lemma}\label{lemma-set-eqn-log-vol}
Let $\varphi$ satisfy \eqref{cmaa}. Then outside a sufficiently large compact subset $K\subseteq M$,
\begin{equation*}
\Delta_{\omega_{\varphi},\,X}\left(\frac{X}{2}\cdot \varphi-\varphi\right)=\tr_{\omega_{\varphi}}\rho_{\omega_0},
\end{equation*}
where $\rho_{\omega_{0}}$ denotes the Ricci form of the cone metric $\omega_{0}$.
\end{lemma}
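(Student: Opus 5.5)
The plan is to differentiate the complex Monge-Amp\`ere equation \eqref{cmaa} along the soliton vector field $X$, and then to use the explicit form of $\omega$ near infinity. Throughout I use the paper's Kähler conventions: $\Delta_{\omega}u=\tr_{\omega}i\partial\bar{\partial}u$ (so that $\tr_{\omega_{\varphi}}\omega=n-\Delta_{\omega_{\varphi}}\varphi$), and the drift Laplacian is $\Delta_{\omega_{\varphi},\,X}=\Delta_{\omega_{\varphi}}-\frac{X}{2}\cdot$, matching the normalisation in \eqref{normal12}.

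Set $\psi:=\frac{X}{2}\cdot\varphi-\varphi$. Then \eqref{cmaa} reads $\log(\omega_{\varphi}^{n}/\omega^{n})=F+\psi$. Apply $X$ to both sides and use the identity $X\cdot\log(\eta^{n}/\omega^{n})=\tr_{\eta}\mathcal{L}_{X}\eta-\tr_{\omega}\mathcal{L}_{X}\omega$, valid for any Kähler form $\eta$. This gives
\[
X\cdot F+X\cdot\psi=\tr_{\omega_{\varphi}}\mathcal{L}_{X}\omega_{\varphi}-\tr_{\omega}\mathcal{L}_{X}\omega .
\]
Since $X$ is real holomorphic, $\mathcal{L}_{X}(i\partial\bar{\partial}\varphi)=i\partial\bar{\partial}(X\cdot\varphi)$. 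Outside a compact set $K$ we have $\omega=\omega_{0}+\rho_{\omega_{0}}$ and $X=r\partial_{r}$ by Proposition \ref{mainprop}(i). There $\mathcal{L}_{X}\omega_{0}=2\omega_{0}$ and $\mathcal{L}_{X}\rho_{\omega_{0}}=0$, since the Ricci form of the cone is scale invariant. Hence
\[
\mathcal{L}_{X}\omega=2\omega-2\rho_{\omega_{0}}\quad\textrm{on } M\setminus K .
\]

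Now compute the two traces. First,
\[
\tr_{\omega_{\varphi}}\mathcal{L}_{X}\omega_{\varphi}=2\tr_{\omega_{\varphi}}\omega-2\tr_{\omega_{\varphi}}\rho_{\omega_{0}}+\Delta_{\omega_{\varphi}}(X\cdot\varphi)=2n+2\Delta_{\omega_{\varphi}}\psi-2\tr_{\omega_{\varphi}}\rho_{\omega_{0}} .
\]
Second, $\tr_{\omega}\mathcal{L}_{X}\omega=2n-2\tr_{\omega}\rho_{\omega_{0}}$. It remains to identify $X\cdot F$. By the proof of Proposition \ref{mainprop}(i), outside $K$ we have $F=c_{0}+G$ with $G=-\log(\omega^{n}/\omega_{0}^{n})$. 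The same Lie-derivative identity, together with $\tr_{\omega_{0}}\mathcal{L}_{X}\omega_{0}=2n$, gives
\[
X\cdot F=-(2n-2\tr_{\omega}\rho_{\omega_{0}})+2n=2\tr_{\omega}\rho_{\omega_{0}} .
\]
Substituting, the $2n$ terms and the $\tr_{\omega}\rho_{\omega_{0}}$ terms cancel, leaving $X\cdot\psi=2\Delta_{\omega_{\varphi}}\psi-2\tr_{\omega_{\varphi}}\rho_{\omega_{0}}$. This is exactly $\Delta_{\omega_{\varphi},\,X}\psi=\tr_{\omega_{\varphi}}\rho_{\omega_{0}}$.

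The computation is routine, and the only delicate point is bookkeeping. One must check that the factor conventions for $\Delta$ and for the drift term agree with those fixed by \eqref{normal12} and by the identity $\tr_{\omega_{\varphi}}\omega=n-\Delta_{\omega_{\varphi}}\varphi$ used in Proposition \ref{prop-bds-moments}. One must also confirm that $\mathcal{L}_{X}\rho_{\omega_{0}}=0$. This holds because $\rho_{\omega_{0}}$ is basic and invariant under the scaling $\nu_{t}$ (Lemma \ref{simple321} with $k=0$), which is what forces the $\tr_{\omega}\rho_{\omega_{0}}$ terms to cancel. No global information on $\varphi$ is needed, since the identity is purely local on $M\setminus K$.
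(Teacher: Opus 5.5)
Your proof is correct. With the conventions $\Delta_{\omega}=\tr_{\omega}i\partial\bar{\partial}$ and $\Delta_{\omega,\,X}=\Delta_{\omega}-\frac{X}{2}\cdot$, which are the ones the paper uses in this lemma, every line checks out, including $X\cdot F=2\tr_{\omega}\rho_{\omega_{0}}$ and the final cancellation.

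Your route differs from the paper's, though. The paper never differentiates \eqref{cmaa} and never needs $F$. It writes $\frac{X}{2}\cdot\varphi-\varphi=f_{\varphi}-f-\varphi$ and applies the normalisation \eqref{normal-f} of the soliton potential in the form $\Delta_{\omega_{\varphi},\,X}f_{\varphi}=-f_{\varphi}$. It then uses $f=\frac{r^{2}}{2}-n$ at infinity to get $\frac{X}{2}\cdot f-f=n$, and reads off $n-\Delta_{\omega_{\varphi}}\varphi-\Delta_{\omega_{\varphi}}f=\tr_{\omega_{\varphi}}\bigl(\omega-\frac{1}{2}\mathcal{L}_{X}\omega\bigr)=\tr_{\omega_{\varphi}}\rho_{\omega_{0}}$.

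Your version instead needs the exact value of $X\cdot F$ outside a compact set. That requires the exact identity $F=c_{0}+G$ from the proof of Proposition \ref{mainprop}(i), not just the expansion in its statement. Equivalently, you could read off $X\cdot F=2\tr_{\omega}\rho_{\omega_{0}}$ from \eqref{normal12} with $f=\frac{r^{2}}{2}-n$.

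The two arguments are the same identity read in different orders. Differentiate \eqref{cmaa} along $X$ on all of $M$, using $\mathcal{L}_{X}\omega_{\varphi}=2i\partial\bar{\partial}f_{\varphi}$, $\mathcal{L}_{X}\omega=2i\partial\bar{\partial}f$ and \eqref{normal12}. This yields exactly $\Delta_{\omega_{\varphi}}f_{\varphi}-\frac{X}{2}\cdot f_{\varphi}+f_{\varphi}=0$. So the paper's proof is shorter, but it rests on $f_{\varphi}=f+\frac{X}{2}\cdot\varphi$ satisfying the boxed normalisation \eqref{normal-f} exactly, which the paper takes for granted. Your proof is self-contained from \eqref{cmaa} and in effect verifies that normalisation.

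One small correction: Lemma \ref{simple321} does not give $\mathcal{L}_{X}\rho_{\omega_{0}}=0$, since it only converts homogeneity into decay. The invariance holds because $\nu_{t}$ is holomorphic with $\nu_{t}^{*}\omega_{0}=t^{2}\omega_{0}$, and the Ricci form is unchanged by constant rescaling. Alternatively, use Cartan's formula, since $\rho_{\omega_{0}}$ is closed and $r\partial_{r}\lrcorner\rho_{\omega_{0}}=0$.
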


\begin{proof}
From the normalised equation \eqref{normal-f} satisfied by $f_{\varphi}$, we have the following outside a sufficiently large compact set $K\subseteq M$:
\begin{equation*}
\begin{split}
\Delta_{\omega_{\varphi},\,X}\left(\frac{X}{2}\cdot \varphi-\varphi\right)&=\Delta_{\omega_{\varphi},\,X}\left(f_{\varphi}-f-\varphi\right)\\
&=-f_{\varphi}-\Delta_{\omega_{\varphi},\,X}\varphi-\Delta_{\omega_{\varphi},\,X}f\\
&=\frac{X}{2}\cdot f-f-\Delta_{\omega_{\varphi}}\varphi-\Delta_{\omega_{\varphi}}f\\
&=n-\Delta_{\omega_{\varphi}}\varphi-\Delta_{\omega_{\varphi}}f\\
&=\tr_{\omega_{\varphi}}(\omega-i\partial\bar{\partial}f)=\tr_{\omega_{\varphi}}\left(\omega-\frac{1}{2}\mathcal{L}_{X}\omega\right)\\
&=\tr_{\omega_{\varphi}}\rho_{\omega_0}.
\end{split}
\end{equation*}
Here, we have used the asymptotics given by Proposition \ref{mainprop}. This yields the desired identity.
\end{proof}

We use this to next prove a bound on the ratio of the volume forms of the shrinking soliton metric and the background metric. 
\begin{prop}[{Bounds on the ratio of the volume forms}]\label{prop-2-t-1} Let $\varphi$ satisfy \eqref{cmaa}. 
Then there exists a positive constant $C>0$ such that $|X\cdot\varphi-2\varphi|\leq C$ on $M$. In particular, $X\cdot \varphi\geq -Cf_{\varphi}-C'$ and $\varphi\leq Cf_{\varphi}+C'$ for some positive constants $C,\,C'>0$. 
\end{prop}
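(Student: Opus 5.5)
The plan is to compare with what is already proved. The upper bound $X\cdot\varphi-2\varphi\leq C$ is part of Proposition \ref{prop-1-t-1}. So the real content of the statement is the lower bound. Set
$$W:=\frac{X}{2}\cdot\varphi-\varphi .$$
We must show $W\geq -C$ on $M$.

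\emph{Setting up the equation for $W$.} By \eqref{cmaa}, $W=\log(\omega_{\varphi}^{n}/\omega^{n})-F$, so $W$ is, up to the bounded function $F$, the log of the volume ratio. By Lemma \ref{lemma-set-eqn-log-vol}, outside a compact set
$$\Delta_{\omega_{\varphi},X}W=\tr_{\omega_{\varphi}}\rho_{\omega_{0}} .$$
Since $\rho_{\omega_0}=O(r^{-2})$ with respect to $\omega_0$ and $\omega=\omega_0+\rho_{\omega_0}$ at infinity, we have $|\tr_{\omega_{\varphi}}\rho_{\omega_{0}}|\leq C f^{-1}\tr_{\omega_{\varphi}}\omega$.

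\emph{Barrier for the right-hand side.} I would add a multiple of a slowly growing function of the background potential $f$ to absorb this error. The natural candidate is $A\log f$, or $Af^{\delta}$ with small $\delta$. At infinity $i\partial\bar\partial f=\omega_0$, so
$$\Delta_{\omega_{\varphi}}\log f=\frac{\tr_{\omega_{\varphi}}\omega_0}{f}-\frac{|\nabla^{g_\varphi}f|^{2}_{g_\varphi}}{f^{2}},$$
and $|\nabla^{g_\varphi}f|^{2}_{g_\varphi}\leq 2f\,\tr_{\omega_\varphi}\omega$. The main obstacle is exactly here: the positive term $\tr_{\omega_\varphi}\omega_0/f$ and the negative gradient term are of the same order.

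My first attempt is a power $f^{\delta}$ with $\delta$ small, which weakens the gradient term by the factor $(1-\delta)$. If that is not enough, I would instead use an auxiliary function built from $r^{2\alpha}$ with $\alpha<1$, as in the construction of $\hat\omega$. This is strictly plurisubharmonic with $i\partial\bar\partial r^{2\alpha}\geq c\,r^{2\alpha-2}\omega_0$, which dominates the $O(r^{-2})\omega_0$ error $\rho_{\omega_0}$ for large $r$, while having sublinear growth. The aim is a function $\Psi$ with $\Psi=o(f)$, for which $U_0:=-W-\Psi$ satisfies
$$\Delta_{\omega_\varphi,X}U_0\geq -C$$
outside a compact set. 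The $-X\cdot\Psi$ drift contributions are bounded, or of size $O(f^{\delta})$, and go into the constant.

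\emph{Maximum principle.} Next I would mimic the proof of Proposition \ref{prop-low-bd-phi}. First, for $\varepsilon>0$ and a large constant $C'$, set $U:=(U_0-C')/f_{\varphi}^{1+\varepsilon}$; a computation identical to \eqref{gal-comp-eps} (with reversed sign) gives $\Delta_{g_\varphi}U-\nabla_{\nabla G(f_\varphi)}U\geq V U$ with $V\geq\varepsilon$ on $\{f_\varphi\geq R\}$. Second, $U\in L^{2}(e^{-G(f_\varphi)}\omega_\varphi^n)$ by Proposition \ref{prop-bds-moments} and Corollary \ref{coro-dom-varphi-X-der}, since $|W|\leq C(f+1)$. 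Lemma \ref{l2-ppe-max} then gives $-W\leq C_\varepsilon f_\varphi^{1+\varepsilon}$, which makes the relevant auxiliary function proper. Third, I would improve the weight step by step: use $U_0/f_\varphi^{\varepsilon}$, and finally add $\alpha f_\varphi$ and apply the minimum principle on $\{f_\varphi>R\}$, letting $\alpha\to0$ as in \eqref{ppe-min-alpha}. This should give $-W\leq C+\Psi$, and then, since $\Psi$ carries no zeroth-order term in the equation, boundedness of $-W$ itself.

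If removing $\Psi$ at the end fails, a fallback is to use the upper bound $W\leq C$ together with the integrability of $\tr_{\omega_\varphi}\omega$ to show $\tr_{\omega_\varphi}\rho_{\omega_0}$ is small in a weighted sense, and run the $L^{2}$ argument directly on $(-W-C)_+$.

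\emph{Consequences.} Once $|X\cdot\varphi-2\varphi|\leq C$ holds, the remaining claims are immediate. First, $X\cdot\varphi\geq 2\varphi-C\geq -Cf_\varphi-C'$ by Proposition \ref{prop-low-bd-phi}. Second, $\varphi\leq\frac12 X\cdot\varphi+C=f_\varphi-f+C\leq f_\varphi+C'$, using that $f$ is bounded below.
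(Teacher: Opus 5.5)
The reduction to a lower bound for $W=\frac{X}{2}\cdot\varphi-\varphi$, the identity $\Delta_{\omega_\varphi,X}W=\tr_{\omega_\varphi}\rho_{\omega_0}$, the estimate $|\tr_{\omega_\varphi}\rho_{\omega_0}|\leq Cf^{-1}\tr_{\omega_\varphi}\omega$ and the two consequences at the end are fine. The core step, however, has a genuine gap.

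\emph{The barrier.} With $U_0=-W-\Psi$ you need $-\Delta_{\omega_\varphi,X}\Psi\geq Cf^{-1}\tr_{\omega_\varphi}\omega-C$. So it is $-\Psi$, not $\Psi$, that must be strictly plurisubharmonic, and your candidates $\log f$, $f^{\delta}$, $r^{2\alpha}$ enter with the wrong sign. If you flip to $U_0=-W+\Psi$, then, since nothing is known about $\omega_\varphi$, you need the form inequality $i\partial\bar\partial\Psi\geq Cf^{-1}\omega$. The drift then destroys the bounded right-hand side. For $\Psi=Ar^{2\alpha}$ you get $-X\cdot\Psi=-2\alpha Ar^{2\alpha}$, which does not ``go into the constant''. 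No sublinear choice helps either. After averaging over $T$, evaluating $i\partial\bar\partial\Psi\geq c\,r^{-2}\omega_0$ on $(X,JX)$ gives $X\cdot X\cdot\Psi\geq 2c$. Hence $X\cdot\Psi$ grows at least like $\log r$.

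\emph{The maximum principle.} Even granting $\Delta_{\omega_\varphi,X}U_0\geq -C$, a non-decaying source only yields $U_0\leq C\log f_\varphi+C$, not $U_0\leq C$. Indeed $\Delta_{g_\varphi,X}\log f_\varphi=-2-|X|^2_{g_\varphi}f_\varphi^{-2}$, so a bounded source is compatible with logarithmic growth. The $\alpha f_\varphi$ step of Proposition~\ref{prop-low-bd-phi} worked only because $\Phi/f_\varphi$ satisfied a homogeneous inequality. So the passage to $-W\leq C+\Psi$, and the subsequent removal of $\Psi$, are unjustified. Your fallback energy argument for $(-W-C)_+$ gives at best integral control, because $W$ satisfies no pointwise inequality with a positive potential, which is what Lemma~\ref{l2-ppe-max} requires.

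\emph{The missing idea.} Do not dominate $\tr_{\omega_\varphi}\rho_{\omega_0}$ by a background function at all; instead convert it back into $W$.
\begin{itemize}
\item Write $\tr_{\omega_\varphi}\omega=n-\Delta_{\omega_\varphi}\varphi$ and add the barrier $C'\varphi/f_\varphi$. By \eqref{barrier}, its drift Laplacian contains $\frac{C'}{f_\varphi}\Delta_{g_\varphi}\varphi$.
\item Choose $C'/f_\varphi>2C/f$ on $\{f_\varphi\geq R\}$, which is possible since $f_\varphi\leq Cf+C'$ by Proposition~\ref{prop-1-t-1}. Then the net coefficient of $\Delta_{g_\varphi}\varphi$ is positive.
\item Use the one-sided bound $\Delta_{\omega_\varphi}\varphi\leq\log(\omega_\varphi^n/\omega^n)=F+W$, which is AM--GM applied to \eqref{cmaa}. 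This turns the uncontrolled second-order term into a zeroth-order term in $W$ with potential of size $1/f$.
\end{itemize}
For $u=W+C'\varphi/f_\varphi$ this gives $\Delta_{g_\varphi,X}u\leq C/f_\varphi-\bigl(\frac{C_1}{f}+\frac{C_2}{f_\varphi^2}+\dots\bigr)u$. The source now \emph{decays}, so subtracting $\alpha/f_\varphi$ removes it. Conjugating by $e^{\beta f_\varphi^{-1}}$ makes the potential positive relative to the drift $\nabla^{g_\varphi}(f_\varphi+2\beta f_\varphi^{-1})$. Lemma~\ref{l2-ppe-max}, with integrability from Proposition~\ref{prop-bds-moments}, then bounds $u$ from below, and hence $W$; the barrier is absorbed via $\varphi=f_\varphi-f-W$.
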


 \begin{remark}
Proposition \ref{prop-2-t-1} makes use of the quadratic curvature decay of the cone metric in a crucial way. 
 \end{remark}
 
\begin{proof}[Proof of Proposition \ref{prop-2-t-1}]
We begin by proving the upper bound, as that is easier to prove than the lower bound. By Proposition \ref{prop-1-t-1}, we know that $X\cdot X\cdot \varphi\leq 2X\cdot\varphi$ outside a compact subset of $M$. We can rewrite this as $X\cdot (X\cdot\varphi-2\varphi)\leq 0$ outside a compact subset of $M$, which says that the logarithm of the ratio of the volume forms is non-increasing along every integral curve of the soliton vector field $X$. We therefore deduce that $X\cdot\varphi-2\varphi\leq C$ for some positive constant $C>0$ that can be taken as the supremum of $X\cdot\varphi-2\varphi$ over a large compact subset of $M$ of the form $\{f\leq R\}$ for some $R>0$.

As for the lower bound, we proceed by establishing several claims, beginning with:
\begin{claim}\label{claim-1-low-bd}
There exists a positive constant $C>0$ and a compact subset $K\subset M$ such that on $M\setminus K$,
\begin{equation*}
\Delta_{g_{\varphi},\,X}\left(\frac{X}{2}\cdot \varphi-\varphi\right)\leq \frac{2nC}{f}-\frac{2C}{f}(\Delta_{g_{\varphi}}\varphi).
\end{equation*}
\end{claim}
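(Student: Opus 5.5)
Start from Lemma \ref{lemma-set-eqn-log-vol}: outside a large compact set $K$,
\begin{equation*}
\Delta_{g_{\varphi},\,X}\left(\frac{X}{2}\cdot\varphi-\varphi\right)=\tr_{\omega_{\varphi}}\rho_{\omega_{0}}.
\end{equation*}
The claim therefore reduces to a pointwise bound on $\tr_{\omega_{\varphi}}\rho_{\omega_{0}}$ in terms of $\tr_{\omega_{\varphi}}\omega$, followed by rewriting $\tr_{\omega_{\varphi}}\omega$ via the Monge--Amp\`ere potential.

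First, the cone has quadratic curvature decay, so $|\rho_{\omega_{0}}|_{\omega_{0}}=O(r^{-2})$ (Lemma \ref{simple321} with $k=0$, since the Ricci form is scale invariant). Outside a compact set, Proposition \ref{mainprop}(i) gives $\omega=\omega_{0}+\rho_{\omega_{0}}$, so $\omega$ and $\omega_{0}$ are uniformly equivalent there. Also $f=\frac{r^{2}}{2}-n$, so $r^{-2}\leq C/f$ for $f$ large. This yields the form inequality
\begin{equation*}
\rho_{\omega_{0}}\leq \frac{2C}{f}\,\omega
\end{equation*}
on $M\setminus K$ for a suitable constant $C>0$. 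Tracing against the positive form $\omega_{\varphi}$ preserves the inequality, giving $\tr_{\omega_{\varphi}}\rho_{\omega_{0}}\leq\frac{2C}{f}\tr_{\omega_{\varphi}}\omega$.

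Second, since $\omega=\omega_{\varphi}-i\partial\bar{\partial}\varphi$, we have
\begin{equation*}
\tr_{\omega_{\varphi}}\omega=n-\tr_{\omega_{\varphi}}(i\partial\bar{\partial}\varphi)=n-\tfrac12\Delta_{g_{\varphi}}\varphi
\end{equation*}
with the Riemannian Laplacian. This is the same identity used in the proof of Proposition \ref{prop-bds-moments}, up to the paper's convention relating $\Delta_{\omega_{\varphi}}$ and $\Delta_{g_{\varphi}}$. Substituting gives
\begin{equation*}
\Delta_{g_{\varphi},X}\left(\tfrac{X}{2}\cdot\varphi-\varphi\right)\leq\frac{2nC}{f}-\frac{2C}{f}\cdot c\,\Delta_{g_{\varphi}}\varphi,
\end{equation*}
where $c\in\{1,\tfrac12\}$ depends on the Laplacian normalisation. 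If $c=\tfrac12$, absorb the factor by renaming constants: replace $\rho_{\omega_0}\le \frac{2C}{f}\omega$ with the inequality written in terms of $\tr_{\omega_\varphi}$ and the convention $\Delta_{\omega_\varphi}=\tr_{\omega_\varphi}i\partial\bar\partial$ used throughout Section 3. The paper consistently writes $n-\Delta_{\omega_{\varphi}}\varphi=\tr_{\omega_{\varphi}}\omega$, so with that convention the stated form follows directly.

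The only step needing care is the sign bookkeeping. The bound must be one-sided, an upper bound on $\rho_{\omega_{0}}$ by a multiple of $\omega$, and it is valid because $\omega_{\varphi}>0$ makes the trace monotone. We need no control on $\tr_{\omega_{\varphi}}\omega$ itself, since it is kept in the right-hand side as $n-\Delta\varphi$. We must also check that the $\Delta$ in the Laplacian on the left matches the operator used in Lemma \ref{lemma-set-eqn-log-vol}; both are the paper's $\Delta_{\omega_\varphi}$ convention. I expect the constant-matching between the $O(r^{-2})$ curvature decay and the $1/f$ factor to be the main point to state carefully, though it is routine.
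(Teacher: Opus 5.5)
Your proposal is correct and follows the paper's own argument: Lemma \ref{lemma-set-eqn-log-vol}, then the one-sided form bound $\rho_{\omega_0}\leq \frac{C}{f}\,\omega$ from the quadratic decay of the cone's Ricci form, then tracing against $\omega_\varphi>0$ and using $\tr_{\omega_\varphi}\omega=n-\Delta_{\omega_\varphi}\varphi$. One caveat: the factor of $2$ cannot be absorbed by renaming constants, because both coefficients on the right share the single constant $C$. The consistent fix is the one you end with, namely using the complex normalisation on both sides (the paper's own display mixes conventions at exactly this step), and in any case Claim \ref{claim-2-low-bd} only uses the structure $\leq \frac{C}{f}\tr_{\omega_\varphi}\omega$.
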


\begin{proof}[Proof of Claim \ref{claim-1-low-bd}]
By Lemma \ref{lemma-set-eqn-log-vol} and the quadratic curvature decay of the background metric $\omega$, one sees that outside a compact subset $K$ of $M$,
\begin{equation*}
\begin{split}
\Delta_{g_{\varphi},\,X}\left(\frac{X}{2}\cdot \varphi-\varphi\right)=2\tr_{\omega_{\varphi}}\rho_{\omega_{0}}&\leq \frac{2C}{f}\tr_{\omega_{\varphi}}\omega=\frac{2nC}{f}-\frac{2C}{f}(\Delta_{g_{\varphi}}\varphi).
\end{split}
\end{equation*}
%where we have used the property of the background metric that $|i\partial\overline{\partial} f-\omega|_{\omega}=O(f^{-1})$.
\end{proof}

We next have:
\begin{claim}\label{claim-2-low-bd}
There exist $R>0$, $C_i>0$, $i=1,2,3,$ and $C'>0$ such that on $\{f_{\varphi}\geq R\}$,
\begin{equation*}
\Delta_{g_{\varphi},\,X}\left(\frac{X}{2}\cdot \varphi-\varphi+C'\left(\frac{\varphi}{f_{\varphi}}\right)\right)\leq \frac{C}{f_{\varphi}}-\left(\frac{C_1}{f}+\frac{C_2}{f_{\varphi}^2}+\left(\frac{C_3}{f_{\varphi}^3}\right)\chi_{\varphi\,\geq\, 0}\right)\left(\frac{X}{2}\cdot \varphi-\varphi+C'\left(\frac{\varphi}{f_{\varphi}}\right)\right).
\end{equation*}
\end{claim}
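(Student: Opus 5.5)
The plan is to compute $\Delta_{g_{\varphi},X}$ of the quotient $\varphi/f_{\varphi}$ and add $C'$ times it to the differential inequality of Claim \ref{claim-1-low-bd}. The constant $C'$ is chosen so that the uncontrolled term $-\frac{2C}{f}\Delta_{g_{\varphi}}\varphi$ is absorbed. Write $W:=\frac{X}{2}\cdot\varphi-\varphi$ and $Z:=W+C'\frac{\varphi}{f_{\varphi}}$.

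\emph{Step 1: the quotient.} Using the normalisation \eqref{normal-f}, in the form $\Delta_{g_{\varphi},X}f_{\varphi}=-2f_{\varphi}$, together with $g_{\varphi}(\nabla^{g_{\varphi}}\varphi,\nabla^{g_{\varphi}}f_{\varphi})=X\cdot\varphi$, I expect on $\{f_{\varphi}\geq R\}$
\begin{equation*}
\Delta_{g_{\varphi},X}\Big(\frac{\varphi}{f_{\varphi}}\Big)=\frac{\Delta_{g_{\varphi}}\varphi-X\cdot\varphi}{f_{\varphi}}+\frac{2\varphi}{f_{\varphi}}-\frac{2X\cdot\varphi}{f_{\varphi}^{2}}+\frac{2\varphi|X|^{2}_{g_{\varphi}}}{f_{\varphi}^{3}}.
\end{equation*}
Since $-X\cdot\varphi+2\varphi=-2W$, this equals
\begin{equation*}
\frac{\Delta_{g_{\varphi}}\varphi-2W}{f_{\varphi}}-\frac{4(W+\varphi)}{f_{\varphi}^{2}}+\frac{2\varphi|X|^{2}_{g_{\varphi}}}{f_{\varphi}^{3}}.
\end{equation*}

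\emph{Step 2: absorbing $\Delta_{g_{\varphi}}\varphi$.} Adding $C'$ times this to Claim \ref{claim-1-low-bd}, the Laplacian terms combine to
\begin{equation*}
\Big(\frac{C'}{f_{\varphi}}-\frac{2C}{f}\Big)\Delta_{g_{\varphi}}\varphi.
\end{equation*}
From \eqref{cmaa} we have $\Delta_{g_{\varphi}}\varphi=n-\tr_{\omega_{\varphi}}\omega\leq n$. Moreover $f_{\varphi}\leq Cf+C'$ by Proposition \ref{prop-1-t-1}, so choosing $C'$ large makes the coefficient nonnegative on $\{f_{\varphi}\geq R\}$. Hence this term is at most $nC'/f_{\varphi}$, and the term $2nC/f$ from Claim \ref{claim-1-low-bd} is likewise at most $C/f_{\varphi}$. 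Together these produce the $\frac{C}{f_{\varphi}}$ on the right-hand side.

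\emph{Step 3: rewriting the zeroth order terms in $Z$.} Substitute $W=Z-C'\varphi/f_{\varphi}$. The term $-\frac{2C'W}{f_{\varphi}}$ becomes $-\frac{2C'}{f_{\varphi}}Z+\frac{2C'^{2}\varphi}{f_{\varphi}^{2}}$. Since $cf\leq f_{\varphi}+C$ fails to be known a priori, I would instead use $f_{\varphi}\leq Cf+C'$ in the direction needed: the coefficient of $Z$ must be bounded below by $C_1/f$, which holds wherever $Z\geq 0$. Where $Z<0$, I would trade part of the $-\frac{2C'}{f_{\varphi}}Z$ term, which is then positive and so only helps, against the required $C_1/f$.

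The term $-\frac{4C'W}{f_{\varphi}^{2}}$ gives the $\frac{C_2}{f_{\varphi}^{2}}Z$ piece plus an error of size $\frac{\varphi}{f_{\varphi}^{3}}$. The remaining errors in $\varphi$ are $\frac{2C'^{2}\varphi}{f_{\varphi}^{2}}$, $-\frac{4C'\varphi}{f_{\varphi}^{2}}$, the error just mentioned, and $\frac{2C'\varphi|X|^{2}}{f_{\varphi}^{3}}$. On $\{\varphi<0\}$ they are handled by the linear bounds $|\varphi|\leq C(f_{\varphi}+1)$ from Propositions \ref{prop-low-bd-phi} and \ref{prop-2-t-1}, so they are $O(1/f_{\varphi})$ and go into the constant. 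On $\{\varphi\geq0\}$, I use $|X|^{2}_{g_{\varphi}}\leq 2f_{\varphi}$, from \eqref{equ:3} and $R_{g_{\varphi}}\geq0$, to write $\frac{\varphi}{f_{\varphi}^{2}}\lesssim\frac{1}{f_{\varphi}^{2}}\cdot\frac{f_{\varphi}}{C'}(Z-W)$. Here the upper bound $W\leq C$ from the first part of Proposition \ref{prop-2-t-1} is used, and this is where the cubic term $\frac{C_3}{f_{\varphi}^{3}}\chi_{\varphi\geq0}$ enters.

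\emph{Main obstacle.} The delicate step is this last bookkeeping. Each term must either carry the correct sign relative to $Z$, which is not known to be bounded below (that is the point of the whole argument), or be genuinely $O(1/f_{\varphi})$. I would first try to split into the cases $\{\varphi\geq0\}$ and $\{\varphi<0\}$ and, within each, into the signs of $Z$. I would enlarge $C'$ and $R$ as needed, using only the one-sided bounds $W\leq C$, $-C(f_{\varphi}+1)\leq\varphi\leq Cf_{\varphi}+C'$, and $f_{\varphi}\leq Cf+C'$.
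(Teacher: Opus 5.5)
Your Step 1 agrees with the paper's computation \eqref{barrier}. There is, however, a genuine gap: it is created in Step 2 and surfaces in Step 3. By bounding $\Delta_{g_{\varphi}}\varphi$ by the constant $n$, you discard the term $-\frac{2C}{f}\Delta_{g_{\varphi}}\varphi$ of Claim \ref{claim-1-low-bd}. That term is where a coefficient $\frac{1}{f}$ in front of $W$ has to come from. What remains at order $f_{\varphi}^{-1}$ is $-\frac{2C'}{f_{\varphi}}W=-\frac{2C'}{f_{\varphi}}Z+\frac{2C'^{2}\varphi}{f_{\varphi}^{2}}$, so your inequality carries the coefficient $\frac{2C'}{f_{\varphi}}$ on $Z$, not $\frac{C_1}{f}$.

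On $\{Z\ge0\}$ the replacement is harmless, but on $\{Z<0\}$ your sign reasoning is backwards. There $-\frac{2C'}{f_{\varphi}}Z=\frac{2C'}{f_{\varphi}}|Z|$ is a positive term in an \emph{upper} bound for $\Delta_{g_{\varphi},X}Z$, so it hurts rather than helps. To reach the target term $\frac{C_1}{f}|Z|$ you would need $\frac{2C'}{f_{\varphi}}\lesssim\frac{C_1}{f}$, i.e.\ $f\lesssim f_{\varphi}$. That is Corollary \ref{coro-pot-bd}, which is deduced from Proposition \ref{prop-2-t-1}. Indeed, since $f_{\varphi}=f+W+\varphi$ and $\varphi\ge-C(f_{\varphi}+1)$, the bound $f\lesssim f_{\varphi}$ is essentially equivalent to the lower bound on $W$ being proved. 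Nor can the excess be absorbed into $\frac{C}{f_{\varphi}}$: where $f\gg f_{\varphi}$ one has $Z\le(C+1)f_{\varphi}+C-f\le-\frac{f}{2}$, so $\big(\frac{2C'}{f_{\varphi}}-\frac{C_1}{f}\big)|Z|\gtrsim\frac{f}{f_{\varphi}}$. Splitting by the sign of $Z$ cannot fix this, because the coefficient in the statement does not depend on that sign.

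The missing idea is to use \eqref{cmaa} in Step 2 instead of the trivial bound. Concavity of $\log$ gives $n-\tr_{\omega_{\varphi}}\omega\le\log(\omega_{\varphi}^{n}/\omega^{n})$, i.e.\ $\Delta_{g_{\varphi}}\varphi\le F+X\cdot\varphi-2\varphi$ in the paper's normalisation. This may be inserted because $\frac{C'}{f_{\varphi}}-\frac{2C}{f}\ge0$. Then $\frac{C'}{f_{\varphi}}(X\cdot\varphi-2\varphi)$ cancels exactly against $-C'\frac{X\cdot\varphi-2\varphi}{f_{\varphi}}$ coming from $C'\Delta_{g_{\varphi},X}(\varphi/f_{\varphi})$, leaving $-\frac{2C}{f}(X\cdot\varphi-2\varphi)$. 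This is where $\frac{C_1}{f}$ comes from, and passing to $Z$ costs only $\frac{2CC'\varphi}{ff_{\varphi}}=O(f_{\varphi}^{-1})$ since $\varphi\le Cf+C'$.

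The remaining $\varphi$-terms then collapse to a multiple of $\frac{\varphi}{f_{\varphi}^{3}}(C'+X\cdot f_{\varphi}-2f_{\varphi})$, which is cubic rather than your $\frac{2C'^{2}\varphi}{f_{\varphi}^{2}}$. On $\{\varphi\ge0\}$ this is not controlled through $W\le C$: the identity $\varphi=\frac{f_{\varphi}}{C'}(Z-W)$ merely undoes your substitution, and $W\le C$ bounds $-W$ only from below. It is controlled through $\varphi\le Cf+C'$ and $f=f_{\varphi}-Z-\varphi\big(1-\frac{C'}{f_{\varphi}}\big)\le f_{\varphi}-Z$, which yields the term $\frac{C_3}{f_{\varphi}^{3}}\chi_{\varphi\ge0}$.

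For what it is worth, your route with this last repair gives the inequality with $\frac{2C'}{f_{\varphi}}$ in place of $\frac{C_1}{f}$, plus an $f_{\varphi}^{-2}\chi_{\varphi\ge0}$ term. That variant would still drive Claims \ref{claim-3-low-bd} and \ref{claim-4-low-bd} if one takes $\beta>C'$, but it is not the stated inequality.
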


\begin{proof}[Proof of Claim \ref{claim-2-low-bd}]
We first compute the drift Laplacian of the function $\frac{\varphi}{f_{\varphi}}$ on $M$ in the following way:
\begin{equation}\label{barrier}
\begin{split}
\Delta_{g_{\varphi},\,X}\left(\frac{\varphi}{f_{\varphi}}\right)&=\frac{\Delta_{g_{\varphi},\,X}\varphi}{f_{\varphi}}+2g_{\varphi}\left(\nabla^{g_{\varphi}}\varphi,\nabla^{g_{\varphi}}f_{\varphi}^{-1}\right)+\varphi\Delta_{g_{\varphi},\,X}f_{\varphi}^{-1}\\
&=\frac{\Delta_{g_{\varphi}}\varphi}{f_{\varphi}}-\frac{X\cdot\varphi}{f_{\varphi}}-2\left(\frac{X\cdot\varphi}{f_{\varphi}^2}\right)-
\varphi\left(\frac{\Delta_{g_{\varphi},\,X}f_{\varphi}}{f_{\varphi}^2}\right)+2\varphi\left(\frac{X\cdot f_{\varphi}}{f_{\varphi}^3}\right)\\
&=\frac{\Delta_{g_{\varphi}}\varphi}{f_{\varphi}}-\frac{X\cdot\varphi}{f_{\varphi}}-2\left(\frac{X\cdot\varphi}{f_{\varphi}^2}\right)+2\left(\frac{\varphi}{f_{\varphi}}\right)+2\varphi
\left(\frac{X\cdot f_{\varphi}}{f_{\varphi}^3}\right).
\end{split}
\end{equation}
Here we have used \eqref{normal-f} in the last line. 
Choose $R>0$ sufficiently large so that 
$\{f_{\varphi}\geq R\}\subseteq M\setminus K$, where $K$ is as in 
Claim \ref{claim-1-low-bd}. This is possible thanks to 
Theorem \ref{prop-sum-sgrs}(ii). Next, by 
Proposition \ref{prop-1-t-1},
for fixed $R>0$, we can choose $C'(R)>0$ sufficiently large so that 
$C'f_{\varphi}^{-1}-2Cf^{-1}>0$ on $\{f_{\varphi}\geq R\}$. By Proposition \ref{prop-1-t-1} again and 
Claim \ref{claim-1-low-bd}, it now follows that on $\{f_{\varphi}\geq R\}$:
\begin{equation*}
\begin{split}
\Delta_{g_{\varphi},\,X}\left(\frac{X}{2}\cdot \varphi-\varphi+C'\left(\frac{\varphi}{f_{\varphi}}\right)\right)&\leq \frac{2nC}{f}+\left(\frac{C'}{f_{\varphi}}-\frac{2C}{f}\right)\Delta_{g_{\varphi}}\varphi\\
&\qquad+ C'\left(-\frac{X\cdot\varphi}{f_{\varphi}}-2\left(\frac{X\cdot\varphi}{f_{\varphi}^2}\right)+2\left(\frac{\varphi}{f_{\varphi}}\right)+2\varphi\left(\frac{X\cdot f_{\varphi}}{f_{\varphi}^3}\right)\right)\\
&\leq \frac{2nC}{f}+\left(\frac{C'}{f_{\varphi}}-\frac{2C}{f}\right)\left(F+X\cdot\varphi-2\varphi\right)\\
&\qquad+ C'\left(-\frac{X\cdot\varphi}{f_{\varphi}}-2\left(\frac{X\cdot\varphi}{f_{\varphi}^2}\right)+2\left(\frac{\varphi}{f_{\varphi}}\right)+2\varphi\left(\frac{X\cdot f_{\varphi}}{f_{\varphi}^3}\right)\right)\\
&= \frac{2nC}{f} + \left(\frac{C'}{f_\varphi}-\frac{2C}{f}\right)F-\frac{2C}{f}\left(X \cdot \varphi - 2\varphi\right) \\
&\qquad+\frac{2C'}{f_{\varphi}^2}\left(\varphi\left(\frac{X\cdot f_{\varphi}}{f_{\varphi}}\right)-X\cdot\varphi\right)\\
&\leq \frac{\tilde{C}}{f_{\varphi}}-\frac{2C}{f}(X\cdot\varphi-2\varphi)+\frac{2C'}{f_{\varphi}^2}\left(\varphi\left(\frac{X\cdot f_{\varphi}}{f_{\varphi}}\right)-X\cdot\varphi\right)
\end{split}
\end{equation*}
for some positive constant $\tilde{C}>0$. Here we have used the fact that $\Delta_{g_{\varphi}}\varphi\leq F+X\cdot\varphi-2\varphi$ by \eqref{cmaa} in the second inequality, together with the fact that $f_{\varphi}\leq Cf$ on $\{f_{\varphi}\geq R\}$ by Proposition \ref{prop-1-t-1} in the final inequality. 
By adding and subtracting $\frac{2CC'\varphi}{ff_{\varphi}}$ artificially, one arrives at
\begin{equation*}
\begin{split}
\Delta_{g_{\varphi},\,X}\left(\frac{X}{2}\cdot \varphi-\varphi+C'\left(\frac{\varphi}{f_{\varphi}}\right)\right)&\leq \frac{\tilde{C}}{f_{\varphi}}+\frac{2CC'}{f_{\varphi}}\left(\frac{\varphi}{f}\right)-\frac{2C}{f}\left(X\cdot\varphi-2\varphi+C'\left(\frac{\varphi}{f_{\varphi}}\right)\right)\\
&\qquad+\frac{2C'}{f_{\varphi}^2}\left(\varphi\left(\frac{X\cdot f_{\varphi}}{f_{\varphi}}\right)-X\cdot\varphi\right)\\
&\leq \frac{C''}{f_{\varphi}}-2\left(\frac{C}{f}+\frac{C'}{f_{\varphi}^2}\right)\left(X\cdot\varphi-2\varphi+C'\left(\frac{\varphi}{f_{\varphi}}\right)\right)\\
&\qquad+\frac{2C'}{f_{\varphi}^2}\left(-2\varphi+C'\left(\frac{\varphi}{f_{\varphi}}\right)+\varphi\left(\frac{X\cdot f_{\varphi}}{f_{\varphi}}\right)\right)\\
&= \frac{C''}{f_{\varphi}}-2\left(\frac{C}{f}+\frac{C'}{f_{\varphi}^2}\right)\left(X\cdot\varphi-2\varphi+C'\left(\frac{\varphi}{f_{\varphi}}\right)\right)\\
&\qquad+\frac{2C'\varphi}{f_{\varphi}^3}\left(C'+X\cdot f_{\varphi}-2f_{\varphi}\right)
\end{split}
\end{equation*}
for some positive constant $C''>0$. Here we have used in the second inequality the fact that
$\varphi$ is dominated by $f$ by virtue of Proposition \ref{prop-1-t-1}.

Now, on the one hand, by combining \eqref{bds-cov-der-f} and Proposition \ref{prop-1-t-1}, we see that $X\cdot f_{\varphi}-2f_{\varphi}$ is bounded from above on $M$, in particular on the set where $\varphi\geq0$. On the other hand, on the set where $\varphi\leq 0$, because $\varphi\geq -Cf_{\varphi}$ by Proposition \ref{prop-low-bd-phi}, we know that $\varphi(C'+X\cdot f_{\varphi}-2f_{\varphi})$ is bounded from above by $f_{\varphi}^2$ up to a positive multiplicative constant. Therefore, we find from above that outside a compact set of $M$,
\begin{equation}
\begin{split}\label{ugly-as-f}
\Delta_{g_{\varphi},\,X}\left(\frac{X}{2}\cdot \varphi-\varphi+C'\left(\frac{\varphi}{f_{\varphi}}\right)\right)&\leq \frac{C''}{f_{\varphi}}-2\left(\frac{C}{f}+\frac{C'}{f_{\varphi}^2}\right)\left(X\cdot\varphi-2\varphi+C'\left(
\frac{\varphi}{f_{\varphi}}\right)\right)+\frac{C''\varphi}{f_{\varphi}^3}\chi_{\varphi\,\geq\, 0},
\end{split}
\end{equation}
where $C''>0$ is a positive constant that may vary from line to line. In order to deal the last term of \eqref{ugly-as-f}, observe the following from Proposition \ref{prop-1-t-1}:
\begin{equation*}
\frac{\varphi}{f_{\varphi}^3}\leq C\left(\frac{f}{f_{\varphi}^3}\right)=\frac{C}{f_{\varphi}^2}-\frac{C}{2}\cdot\frac{X\cdot\varphi}{f_{\varphi}^3}=\frac{C}{f_{\varphi}^2}-\frac{C}{f_{\varphi}^3}\left(\frac{X}{2}\cdot \varphi-\varphi+C'\left(\frac{\varphi}{f_{\varphi}}\right)\right)-C\left(\frac{\varphi}{f_{\varphi}^3}\right)\left(1-\frac{C'}{f_{\varphi}}\right),
\end{equation*}
which, after increasing $R$ and absorbing the last term on the right-hand side into the left-hand side if $f_{\varphi}$ is large compared to $C'$, gives on $\{\varphi\,\geq\, 0\}\cap\{f_{\varphi}\geq R\}$:
\begin{equation}\label{ugly-as-f-II}
\frac{\varphi}{f_{\varphi}^3}\leq \frac{C}{f_{\varphi}^2}-\frac{C}{f_{\varphi}^3}\left(\frac{X}{2}\cdot \varphi-\varphi+C'\left(\frac{\varphi}{f_{\varphi}}\right)\right),
\end{equation}
 for a possibly larger constant $C>0$ and the same constant $C'>0$. Combining \eqref{ugly-as-f} and \eqref{ugly-as-f-II} leads to the desired conclusion.
\end{proof}

Now we prove:
\begin{claim}\label{claim-3-low-bd}
With the notation as in Claim \ref{claim-2-low-bd}, there exists $\alpha>0$ and $R>0$ such that on $\{f_{\varphi}\geq R\}$,
\begin{equation*}
\Delta_{g_{\varphi},\,X}\left(\frac{X}{2}\cdot \varphi-\varphi+C'\left(\frac{\varphi}{f_{\varphi}}\right)-\frac{\alpha}{f_{\varphi}}\right)\leq- \left(\frac{C_1}{f}+\frac{C_2}{f_{\varphi}^2}+\left(\frac{C_3}{f_{\varphi}^3}\right)\chi_{\varphi\,\geq\, 0}\right)\left(\frac{X}{2}\cdot \varphi-\varphi+C'\left(\frac{\varphi}{f_{\varphi}}\right)-\frac{\alpha}{f_{\varphi}}\right).
\end{equation*}
\end{claim}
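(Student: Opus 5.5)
Write $W:=\frac{X}{2}\cdot\varphi-\varphi+C'\left(\frac{\varphi}{f_{\varphi}}\right)$ and $P:=\frac{C_1}{f}+\frac{C_2}{f_{\varphi}^2}+\left(\frac{C_3}{f_{\varphi}^3}\right)\chi_{\varphi\,\geq\,0}$. Claim \ref{claim-2-low-bd} then reads $\Delta_{g_{\varphi},\,X}W\leq \frac{C}{f_{\varphi}}-PW$ on $\{f_{\varphi}\geq R\}$. The plan is to absorb the inhomogeneous term $\frac{C}{f_{\varphi}}$ by subtracting a multiple of $f_{\varphi}^{-1}$. The key point is that $f_{\varphi}^{-1}$ is a strict supersolution of the drift Laplacian at exactly order $f_{\varphi}^{-1}$.

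The first step computes $\Delta_{g_{\varphi},\,X}f_{\varphi}^{-1}$ using the normalisation \eqref{normal-f}, i.e. $\Delta_{g_{\varphi},\,X}f_{\varphi}=-2f_{\varphi}$:
\begin{equation*}
\Delta_{g_{\varphi},\,X}\left(f_{\varphi}^{-1}\right)=-\frac{\Delta_{g_{\varphi},\,X}f_{\varphi}}{f_{\varphi}^{2}}+\frac{2|\nabla^{g_{\varphi}}f_{\varphi}|^{2}_{g_{\varphi}}}{f_{\varphi}^{3}}=\frac{2}{f_{\varphi}}+\frac{2\,X\cdot f_{\varphi}}{f_{\varphi}^{3}}\geq\frac{2}{f_{\varphi}}.
\end{equation*}
The inequality uses $X\cdot f_{\varphi}=|X|^{2}_{g_{\varphi}}\geq 0$. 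Hence
\begin{equation*}
\Delta_{g_{\varphi},\,X}\left(W-\frac{\alpha}{f_{\varphi}}\right)\leq \frac{C-2\alpha}{f_{\varphi}}-PW=\frac{C-2\alpha}{f_{\varphi}}+\frac{\alpha P}{f_{\varphi}}-P\left(W-\frac{\alpha}{f_{\varphi}}\right).
\end{equation*}

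It therefore suffices to choose $\alpha$ and $R$ so that $\frac{C-2\alpha}{f_{\varphi}}+\frac{\alpha P}{f_{\varphi}}\leq0$, i.e. $\alpha(2-P)\geq C$, on $\{f_{\varphi}\geq R\}$. Since $P\leq \frac{C_1}{f}+\frac{C_2}{f_{\varphi}^2}+\frac{C_3}{f_{\varphi}^3}$, we need $P\to0$ uniformly as $f_{\varphi}\to\infty$. The terms in $f_{\varphi}$ are clear. For $\frac{C_1}{f}$ I use Proposition \ref{prop-1-t-1}, which gives $f_{\varphi}\leq Cf+C'$, so $f\geq (f_{\varphi}-C')/C\to\infty$ on $\{f_{\varphi}\geq R\}$. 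Consequently, after enlarging $R$ we get $P\leq 1$ there, and taking $\alpha:=C$ gives $\alpha(2-P)\geq C$, which is the claim.

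The only real subtlety is this uniform smallness of $P$. It hinges on the one-sided comparison $f_{\varphi}\lesssim f$ from Proposition \ref{prop-1-t-1}, which is exactly the direction needed. The constants $C_i$ and $C'$ stay unchanged from Claim \ref{claim-2-low-bd}, which matters because the claim keeps the same potential $P$; only $R$ is enlarged. The indicator $\chi_{\varphi\geq0}$ causes no trouble, because it only makes $P$ smaller and the absorption was done with the upper bound for $P$.
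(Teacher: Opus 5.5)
This is the paper's argument: subtract $\alpha f_{\varphi}^{-1}$, use $\Delta_{g_{\varphi},\,X}f_{\varphi}^{-1}=2f_{\varphi}^{-1}+2|X|^{2}_{g_{\varphi}}f_{\varphi}^{-3}\geq 2f_{\varphi}^{-1}$ from \eqref{normal-f}, and take $2\alpha\geq C$. Your rearrangement has a sign slip, since $-PW=-P\bigl(W-\frac{\alpha}{f_{\varphi}}\bigr)-\frac{\alpha P}{f_{\varphi}}$: the extra term is $-\frac{\alpha P}{f_{\varphi}}\leq 0$, and the paper simply drops it using $P\geq 0$. For the same reason your displayed ``$=$'' still holds as ``$\leq$'', so your conclusion stands, but the uniform smallness of $P$ via Proposition \ref{prop-1-t-1}, which you call the only real subtlety, is not needed.
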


\begin{proof}[Proof of Claim \ref{claim-3-low-bd}]
By \eqref{normal-f} and Claim \ref{claim-2-low-bd}, for $\alpha>0$ we have that 
\begin{equation*}
\begin{split}
\Delta_{g_{\varphi},\,X}\left(\frac{X}{2}\cdot \varphi-\varphi+C'\left(\frac{\varphi}{f_{\varphi}}\right)-\frac{\alpha}{f_{\varphi}}\right)&\leq \frac{C}{f_{\varphi}}-\left(\frac{C_1}{f}+\frac{C_2}{f_{\varphi}^2}+\left(\frac{C_3}{f_{\varphi}^3}\right)\chi_{\varphi\,\geq\, 0}\right)\left(\frac{X}{2}\cdot \varphi-\varphi+C'\left(\frac{\varphi}{f_{\varphi}}\right)\right)\\
&\qquad-\frac{2\alpha}{f_{\varphi}}-2\alpha\left(\frac{|X|^2_{g_{\varphi}}}{f_{\varphi}^3}\right)\\
&\leq - \left(\frac{C_1}{f}+\frac{C_2}{f_{\varphi}^2}+\left(\frac{C_3}{f_{\varphi}^3}\right)\chi_{\varphi\,\geq\, 0}\right)\left(\frac{X}{2}\cdot \varphi-\varphi+C'\left(\frac{\varphi}{f_{\varphi}}\right)\right)\\
&\leq - \left(\frac{C_1}{f}+\frac{C_2}{f_{\varphi}^2}+\left(\frac{C_3}{f_{\varphi}^3}\right)\chi_{\varphi\,\geq\, 0}\right)\left(\frac{X}{2}\cdot \varphi-\varphi+C'\left(\frac{\varphi}{f_{\varphi}}\right)-\frac{\alpha}{f_{\varphi}}\right),
\end{split}
\end{equation*}
where we have chosen $\alpha>0$ so that $2\alpha\geq C$ in the second inequality.
\end{proof}

Finally, we prove:
\begin{claim}\label{claim-4-low-bd}
Let $u$ be a smooth function defined on $\{f_{\varphi}>R\}$ for some $R>0$ such that there exist positive constants $C_i$, $i=1,2,3$, so that:
\begin{equation*}
\Delta_{g_{\varphi},\,X}u\leq  - \left(\frac{C_1}{f}+\frac{C_2}{f_{\varphi}^2}+\left(\frac{C_3}{f_{\varphi}^3}\right)\chi_{\varphi\,\geq\, 0}\right)u.
\end{equation*}
Then there exists $\beta>0$ and $R_{\beta}\geq R$ such that on $\{f_{\varphi}>R_{\beta}\}$,
\begin{equation*}
\Delta_{g_{\varphi},\,X}\left(e^{\beta f_{\varphi}^{-1}}u\right)\leq 2\beta g_{\varphi}\left(\nabla^{g_{\varphi}}f_{\varphi}^{-1},\nabla^{g_{\varphi}}\left(e^{\beta f_{\varphi}^{-1}}u\right)\right)+V\left(e^{\beta f_{\varphi}^{-1}}u\right),
\end{equation*}
where the potential $V$ is defined by:
\begin{equation*}
V:=\frac{2\beta}{f_{\varphi}}+2\beta\left(\frac{|X|^2_{g_{\varphi}}}{f_{\varphi}^3}\right)-\beta^2\left(\frac{|X|^2_{g_{\varphi}}}{f_{\varphi}^4}\right) -\frac{C_1}{f}-\frac{C_2}{f_{\varphi}^2}-\left(\frac{C_3}{f_{\varphi}^3}\right)\chi_{\varphi\,\geq\, 0}>0\qquad\textrm{on $\{f_{\varphi}>R_{\beta}\}$}.
\end{equation*}
\end{claim}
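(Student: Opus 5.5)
The proof is a direct computation. Write $h:=\beta f_{\varphi}^{-1}$ and $w:=e^{h}u$. I first expand $\Delta_{g_{\varphi},X}(e^{h}u)$ with the product rule for the drift Laplacian:
\begin{equation*}
\Delta_{g_{\varphi},\,X}(e^{h}u)=e^{h}\Delta_{g_{\varphi},\,X}u+2g_{\varphi}(\nabla^{g_{\varphi}}e^{h},\nabla^{g_{\varphi}}u)+u\,\Delta_{g_{\varphi},\,X}e^{h}.
\end{equation*}
Next I rewrite the cross term in terms of $w$, using $\nabla^{g_{\varphi}}u=e^{-h}\nabla^{g_{\varphi}}w-u\nabla^{g_{\varphi}}h$. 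This gives
\begin{equation*}
2g_{\varphi}(\nabla^{g_{\varphi}}e^{h},\nabla^{g_{\varphi}}u)=2g_{\varphi}(\nabla^{g_{\varphi}}h,\nabla^{g_{\varphi}}w)-2|\nabla^{g_{\varphi}}h|^{2}_{g_{\varphi}}w.
\end{equation*}
The first piece is exactly the gradient term $2\beta g_{\varphi}(\nabla^{g_{\varphi}}f_{\varphi}^{-1},\nabla^{g_{\varphi}}w)$ in the statement.

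Then I compute $\Delta_{g_{\varphi},X}e^{h}=e^{h}(\Delta_{g_{\varphi},X}h+|\nabla^{g_{\varphi}}h|^{2})$. For $\Delta_{g_{\varphi},X}h$ I use the normalisation \eqref{normal-f}, $\Delta_{g_{\varphi},X}f_{\varphi}=-2f_{\varphi}$, together with $|\nabla^{g_{\varphi}} f_{\varphi}|^{2}=|X|^{2}_{g_{\varphi}}$, exactly as in the computation preceding Claim \ref{claim-3-low-bd}:
\begin{equation*}
\Delta_{g_{\varphi},\,X}f_{\varphi}^{-1}=\frac{2}{f_{\varphi}}+\frac{2|X|^{2}_{g_{\varphi}}}{f_{\varphi}^{3}},\qquad |\nabla^{g_{\varphi}}h|^{2}=\beta^{2}\frac{|X|^{2}_{g_{\varphi}}}{f_{\varphi}^{4}}.
\end{equation*}
Collecting everything, the $w$-coefficients are $\Delta h+|\nabla h|^{2}-2|\nabla h|^{2}=\Delta_{g_{\varphi},X}h-|\nabla h|^{2}$. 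Together with the hypothesis $e^{h}\Delta_{g_{\varphi},X}u\le -(\cdots)w$, which holds because $e^{h}>0$, this produces exactly the stated $V$.

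It remains to show $V>0$ for suitable $\beta$ and large $f_{\varphi}$; I expect this to be the only real obstacle. The term $-\beta^{2}|X|^{2}/f_{\varphi}^{4}$ is absorbed by $2\beta|X|^{2}/f_{\varphi}^{3}$ as soon as $f_{\varphi}>\beta/2$. The term $C_{3}f_{\varphi}^{-3}$ and the term $C_{2}f_{\varphi}^{-2}$ are both $o(f_{\varphi}^{-1})$. The term $C_{1}/f$ needs a comparison between $f$ and $f_{\varphi}$ from below, namely $f\geq c f_{\varphi}-C$. This follows from Corollary \ref{coro-dom-varphi-X-der}, or more precisely from Proposition \ref{prop-2-t-1}'s upper bound $X\cdot\varphi\leq 2\varphi+C$ combined with $\varphi\leq Cf+C'$ (Proposition \ref{prop-1-t-1}), which give $f_{\varphi}=f+\frac{X}{2}\cdot\varphi\leq C''f+C'''$. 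Hence $C_{1}/f\leq C_{1}C''/f_{\varphi}$ for $f_{\varphi}$ large. Choosing $\beta>C_{1}C''$, the term $2\beta/f_{\varphi}$ dominates $C_1/f+C_2 f_\varphi^{-2}+C_3f_\varphi^{-3}$ on $\{f_{\varphi}>R_{\beta}\}$ for $R_{\beta}$ large, so $V>0$ there.
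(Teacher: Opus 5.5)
Your proposal is correct and is essentially the paper's argument: the same product-rule expansion with the cross term rewritten in terms of $e^{\beta f_{\varphi}^{-1}}u$, the normalisation \eqref{normal-f} to compute $\Delta_{g_{\varphi},X}f_{\varphi}^{-1}$, and positivity of $V$ from the comparison $f_{\varphi}\leq Cf+C'$, choosing $\beta$ large and then $R_{\beta}$ large. The differences are cosmetic: you absorb $-\beta^{2}|X|^{2}_{g_{\varphi}}f_{\varphi}^{-4}$ into $2\beta|X|^{2}_{g_{\varphi}}f_{\varphi}^{-3}$ once $f_{\varphi}>\beta/2$, whereas the paper absorbs it into half of $2\beta f_{\varphi}^{-1}$ using $|X|^{2}_{g_{\varphi}}=O(f_{\varphi})$; and the comparison $f_{\varphi}\leq Cf+C'$ is stated outright at the end of Proposition \ref{prop-1-t-1}, so you need not route it through Proposition \ref{prop-2-t-1}, inside whose proof this claim sits.
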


\begin{proof}[Proof of Claim \ref{claim-4-low-bd}]
Using \eqref{normal-f} and our assumption, we bound the drift Laplacian of $e^{\beta f_{\varphi}^{-1}}u$ in the following way:
\begin{equation*}
\begin{split}
\Delta_{g_{\varphi},\,X}\left(e^{\beta f_{\varphi}^{-1}}u\right)&=\left(\Delta_{g_{\varphi},\,X}e^{\beta f_{\varphi}^{-1}}\right)u+2g_{\varphi}\left(\nabla^{g_{\varphi}}e^{\beta f_{\varphi}^{-1}},\nabla^{g_{\varphi}}u\right)+e^{\beta f_{\varphi}^{-1}}\Delta_{g_{\varphi},\,X}u\\
&=\left(\beta \Delta_{g_{\varphi},\,X}f_{\varphi}^{-1}+\beta^2|\nabla^{g_{\varphi}}f_{\varphi}^{-1}|^2_{g_{\varphi}}\right)e^{\beta f_{\varphi}^{-1}}u+2\beta g_{\varphi}\left(\nabla^{g_{\varphi}} f_{\varphi}^{-1},\nabla^{g_{\varphi}}\left(e^{\beta f_{\varphi}^{-1}}u\right)\right)\\
&\quad-2\beta^2|\nabla^{g_{\varphi}}f_{\varphi}^{-1}|^2_{g_{\varphi}}e^{\beta f_{\varphi}^{-1}}u+e^{\beta f_{\varphi}^{-1}}\Delta_{g_{\varphi},\,X}u\\
&=\left(\frac{2\beta}{f_{\varphi}}+2\beta\left(\frac{|X|^2_{g_{\varphi}}}{f_{\varphi}^3}\right)-\beta^2\left(\frac{|X|^2_{g_{\varphi}}}{f_{\varphi}^4}\right)\right)e^{\beta f_{\varphi}^{-1}}u
+2\beta g_{\varphi}\left(\nabla^{g_{\varphi}} f_{\varphi}^{-1},\nabla^{g_{\varphi}}\left(e^{\beta f_{\varphi}^{-1}}u\right)\right)\\
&\qquad
+e^{\beta f_{\varphi}^{-1}}\Delta_{g_{\varphi},\,X}u\\
&\leq\underbrace{\left(\frac{2\beta}{f_{\varphi}}+2\beta\left(\frac{|X|^2_{g_{\varphi}}}{f_{\varphi}^3}\right)-\beta^2\left(\frac{|X|^2_{g_{\varphi}}}{f_{\varphi}^4}\right) -\frac{C_1}{f}-\frac{C_2}{f_{\varphi}^2}-\left(\frac{C_3}{f_{\varphi}^3}\right)\chi_{\varphi\,\geq\, 0}\right)}_{=:V}e^{\beta f_{\varphi}^{-1}}u\\
&\qquad+2\beta g_{\varphi}\left(\nabla^{g_{\varphi}} f_{\varphi}^{-1},\nabla^{g_{\varphi}}\left(e^{\beta f_{\varphi}^{-1}}u\right)\right).
\end{split}
\end{equation*}
This yields the expected result after noting that the potential $V$ is positive on $\{f_{\varphi}>R\}$ once $\beta>0$ is chosen such that $\beta f>C_1 f_{\varphi}$
which is possible by Proposition \ref{prop-1-t-1} and after taking $R_{\beta}\geq R$ sufficiently large so that the remaining term
\begin{equation*}
\frac{\beta}{f_{\varphi}}-\beta^2\left(\frac{|X|^2_{g_{\varphi}}}{f_{\varphi}^4}\right)-\frac{C_2}{f_{\varphi}^2}-\left(\frac{C_3}{f_{\varphi}^3}\right)\chi_{\varphi\,\geq\, 0}>0\qquad\textrm{on $\{f_{\varphi}>R_{\beta}\}$.}
\end{equation*}
\end{proof}

Now, taken together, Claims \ref{claim-3-low-bd} and \ref{claim-4-low-bd}
imply that 
\begin{equation*}
\Delta_{g_{\varphi}}U-\nabla^{g_{\varphi}}_{\nabla^{g_{\varphi}}(G(f_{\varphi}))}U\geq V\cdot U,
\end{equation*}
where $U:=-e^{\beta f_{\varphi}^{-1}}\left(\frac{X}{2}\cdot \varphi-\varphi+C'\frac{\varphi}{f_{\varphi}}-\frac{\alpha}{f_{\varphi}}\right)$, 
$G(f_{\varphi}):=f_{\varphi}+2\beta f_{\varphi}^{-1}$, and $V$ is defined as in Claim \ref{claim-4-low-bd}.
Invoking Lemma \ref{l2-ppe-max} (after appealing to Proposition \ref{prop-bds-moments} to infer that
$U\in L^{2}(e^{-G(f_{\varphi})}d\mu_{g_{\varphi}})$) now yields the result.
\end{proof}

Using Proposition \ref{prop-2-t-1}, we can now prove the following coercive estimate.
\begin{corollary}[{Equivalence of potentials}]\label{coro-pot-bd}
Let $\varphi$ satisfy \eqref{cmaa}. There exists $c>0$ such that $f+\varphi\geq cf$ outside a compact subset of $M$.
In particular, there exists $C>0$ such that $C^{-1}f\leq f_{\varphi}\leq Cf$ outside a compact subset of $M$.
\end{corollary}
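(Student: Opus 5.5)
Since $|X\cdot\varphi-2\varphi|\leq C$ on $M$ by Proposition \ref{prop-2-t-1}, the plan is to integrate this along the flow of $X$ and obtain a lower bound $\varphi\geq -C$ (up to a term $o(f)$) outside a compact set. Outside a compact subset of $M$ we have $f=\pi^{*}(\frac{r^{2}}{2}-n)$ and $X=r\partial_{r}$, so $X\cdot f=2f+2n$. Write $h:=X\cdot\varphi-2\varphi$, so $|h|\leq C$. Along an integral curve $\gamma(t)$ of $X$ starting on a fixed compact level set $\{f=R_{0}\}$, the function $r^{2}$ grows like $e^{2t}$ and $f(\gamma(t))\approx (R_{0}+n)e^{2t}-n$. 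The ODE $\frac{d}{dt}\varphi(\gamma(t))=2\varphi(\gamma(t))+h(\gamma(t))$ then integrates to
$$e^{-2t}\varphi(\gamma(t))=\varphi(\gamma(0))+\int_{0}^{t}e^{-2s}h(\gamma(s))\,ds.$$
The right-hand side is bounded in absolute value by $\max_{\{f=R_{0}\}}|\varphi|+C/2$. Hence $|\varphi|\leq C'(f+1)$, and more precisely $\varphi/f$ converges along each flow line to a bounded limit $\lambda(x)$ that depends continuously on the starting point $x\in\{f=R_{0}\}$, with
$$\varphi=\lambda\, f+O(1)\qquad\text{outside a compact set.}$$
In particular $X\cdot\varphi=2\lambda f+O(1)$.

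The claim $f+\varphi\geq cf$ then amounts to showing $1+\lambda>0$ at every point of the compact link $\{f=R_{0}\}$; compactness then gives a uniform $c$. For this I would use the positivity of $\omega_{\varphi}$ on the cone end. Because $\varphi$ is $T$-invariant and $\omega_{0}=\frac{i}{2}\partial\bar\partial r^{2}$, the function $f+\varphi=(1+\lambda)\frac{r^{2}}{2}+O(1)$ is, up to the bounded error and the $O(r^{-2})$ error $\omega-\omega_{0}$, a potential for $\omega_{\varphi}$. Note $\lambda$ is $X$-invariant (degree $0$) and $T$-invariant. I would rescale by $\nu_{t}$: the forms $t^{-2}\nu_{t}^{*}\omega_{\varphi}$ are positive and converge weakly, as $t\to\infty$, to $i\partial\bar\partial\big((1+\lambda)\frac{r^{2}}{2}\big)$ on $C_{0}\setminus\{o\}$, which is therefore a closed positive current. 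Weak convergence holds because the potentials converge in $L^{1}_{\mathrm{loc}}$, the bounded $O(1)$ term vanishing after division by $t^{2}$. So $(1+\lambda)r^{2}$ is a nonnegative-psh degree-$2$ function on the cone. Since it is plurisubharmonic and homogeneous, it is nonnegative. In fact it must be strictly positive: if $1+\lambda$ vanished at a point of the link, then along the corresponding ray of the cone $(1+\lambda)r^{2}$ would vanish, and a psh function that is homogeneous of positive degree, nonnegative, and vanishes on a ray contradicts a maximum-principle argument on the link. This is the step I would justify using the strictness coming from the volume form.

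The cleanest route to strictness, and the one I would attempt first, uses \eqref{cmaa} directly. Since $F$ is bounded and $X\cdot\varphi-2\varphi$ is bounded (Proposition \ref{prop-2-t-1}), the volume ratio $\omega_{\varphi}^{n}/\omega^{n}=e^{F+\frac{X}{2}\cdot\varphi-\varphi}$ is bounded above and below by positive constants. After rescaling, the limiting current $i\partial\bar\partial((1+\lambda)\frac{r^{2}}{2})$ then has Monge--Amp\`ere measure bounded below by $c\,\omega_{0}^{n}$, using continuity of the complex Monge--Amp\`ere operator along the locally bounded, uniformly Lipschitz-in-scale potentials. This forbids $1+\lambda=0$ anywhere, because at such a ray the potential would attain its minimum $0$ along a whole complex curve (the orbit of $X-iJX$), where the Monge--Amp\`ere mass would have to degenerate. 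I expect this step, turning the volume-form lower bound into strict positivity of $1+\lambda$, to be the main obstacle.

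The alternative is a barrier argument with the drift Laplacian, in the style of Proposition \ref{prop-low-bd-phi}. From \eqref{cmaa} and $\log$-concavity, $\Delta_{g_{\varphi}}(f+\varphi)=\tr_{\omega_{\varphi}}(\omega+i\partial\bar\partial f)+\ldots$, and combined with the AM--GM bound $\tr_{\omega_{\varphi}}\omega\geq n\,(\omega^{n}/\omega_{\varphi}^{n})^{1/n}\geq c>0$ this should yield the coercivity directly. Once $f+\varphi\geq cf$ holds, the final claim follows because $f_{\varphi}=f+\frac{X}{2}\cdot\varphi=f+\varphi+O(1)\geq cf-C$. Together with $f_{\varphi}\leq Cf+C'$ from Proposition \ref{prop-1-t-1}, this gives $C^{-1}f\leq f_{\varphi}\leq Cf$ outside a compact set.
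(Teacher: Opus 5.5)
Your reduction is correct. Integrating $h:=X\cdot\varphi-2\varphi=O(1)$ along the flow of $X=r\partial_r$ from the link $\{f=R_0\}$ does give $\varphi=\lambda f+O(1)$ with $\lambda$ continuous. The corollary is then equivalent to $\min_{\{f=R_0\}}(1+\lambda)>0$. Your rescaling argument for $1+\lambda\geq 0$ can also be justified, by restricting to orbits of $X^{1,0}$. But the strict inequality, which you yourself flag as the main obstacle, is never established.

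The Monge--Amp\`ere argument rests on a principle that is false for continuous psh functions. Take $u=|z_1|(1+|z_2|^2)$ on $\mathbb{C}^2$. It is continuous, plurisubharmonic, nonnegative, and vanishes on the complex line $\{z_1=0\}$. Yet $\det(u_{i\bar\jmath})\equiv\frac14$ off that line, so $(i\partial\bar\partial u)^2$ is a constant multiple of Lebesgue measure, since Bedford--Taylor measures of bounded psh functions do not charge pluripolar sets. A lower bound $\omega_\varphi^n\geq c\,\omega^n$ survives the rescaling limit only as an inequality of measures. So without $C^2$ control, nothing stops $(1+\lambda)r^2$ from vanishing along an $X^{1,0}$-orbit. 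The paper only obtains that control later, in Proposition \ref{equiv-met-prop-C^2}, whose proof uses this corollary.

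The barrier paragraph is not an argument either. A lower bound on $\Delta_{g_\varphi}(f+\varphi)$ is of subsolution type, so via the maximum principle it bounds $f+\varphi$ from above, not below. Moreover $i\partial\bar\partial(f+\varphi)=\omega_\varphi-\rho_{\omega_0}$ at infinity, so $\tr_{\omega_\varphi}\omega$ does not even enter.

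The missing idea is the properness of the soliton potential $f_\varphi$ (Theorem \ref{prop-sum-sgrs}(ii)). With it your framework closes in two lines. Since $f_\varphi=f+\frac{X}{2}\cdot\varphi=f+\varphi+\frac{h}{2}$, along each flow line $\gamma_x$ you have $f_\varphi=(1+\lambda(x))f+O(1)$. If $1+\lambda(x_0)\leq 0$, then $f_\varphi$ stays bounded above along $\gamma_{x_0}(t)$, which leaves every compact set as $t\to\infty$; this contradicts properness. Continuity of $\lambda$ on the compact link then gives $1+\lambda\geq c>0$.

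This is the paper's argument in different clothing. Outside a compact set, $X\cdot(f+\varphi)=2f_\varphi+2n$, which is $\geq 2R+2n>0$ on $\{f_\varphi\geq R\}$. Hence $f+\varphi$ is proper and eventually exceeds any given constant. Integrating $X\cdot(f+\varphi)\geq 2(f+\varphi)-C$ along the flow lines then gives $f+\varphi\geq cf$. Your final deduction of $C^{-1}f\leq f_\varphi\leq Cf$ is fine once this is in place.
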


\begin{proof}
Observe from Proposition \ref{mainprop}(ii) that outside a compact set of $M$, $X\cdot (f+\varphi)=2(f+n)+X\cdot \varphi\geq 2(f+\varphi) - C$ for some positive constant $C>0$ by virtue of Proposition \ref{prop-2-t-1}. Now, critical points of $f+\varphi$ lie in $\{f_{\varphi}<R\}$ for some $R>0$.
Indeed, the previous computation shows in particular that if $R>0$ is chosen large enough 
so that $X\cdot f=2(f+n)$ on $\{f_{\varphi}\geq R\}$, then
$X\cdot(f+\varphi)=2f_{\varphi}+2n\geq 2R+2n>0$ on this set. Such a choice of $R$ is possible because $f_{\varphi}$ is proper by  
Theorem \ref{prop-sum-sgrs}(ii). By integrating the inequality $X\cdot (f+\varphi)\geq 2R+2n>0$ along the flow lines of $X$, one even sees
that $f+\varphi$ is proper and bounded from below (with logarithmic growth with respect to the distance from a fixed point). In particular, for all $C>0$, there exists $R(C)>0$ such that $\inf_{\{f_{\varphi}=R\}}(f+\varphi)>C$, and so one can integrate the inequality $X\cdot (f+\varphi)\geq 2(f+\varphi) - C$ along the flow lines of $X$ to obtain the required growth on $f+\varphi$.

As for the growth of $f_{\varphi}$, we already know from Proposition \ref{prop-1-t-1} that there exists $C>0$ such that $f_{\varphi}\leq Cf$ outside a compact subset of $M$. The lower bound on $f_{\varphi}$ follows directly from the lower bound on $X\cdot\varphi-2\varphi$ given by Proposition \ref{prop-2-t-1}, together with the previously established lower bound on $f+\varphi$.
\end{proof}

We next prove that the metrics are equivalent.
\begin{prop}[{Equivalence of the metrics}]\label{equiv-met-prop-C^2} Let $\varphi$ satisfy \eqref{cmaa}. 
Then there exists $C>0$ such that on $M$, $$C^{-1}\omega\leq \omega_{\varphi}\leq C\omega.$$
\end{prop}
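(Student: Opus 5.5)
The plan is to reduce the two-sided bound to a one-sided trace bound, using the volume-form control already available. By \eqref{cmaa} we have
$$\log\frac{\omega_{\varphi}^{n}}{\omega^{n}}=F+\frac{1}{2}\left(X\cdot\varphi-2\varphi\right).$$
Here $F$ is bounded by Proposition \ref{mainprop}(i), and $|X\cdot\varphi-2\varphi|\leq C$ by Proposition \ref{prop-2-t-1}. Hence $C^{-1}\omega^{n}\leq\omega_{\varphi}^{n}\leq C\omega^{n}$ on $M$. By the elementary inequality $\tr_{\omega}\omega_{\varphi}\leq \frac{1}{(n-1)!}(\tr_{\omega_{\varphi}}\omega)^{n-1}\frac{\omega_{\varphi}^{n}}{\omega^{n}}$, it then suffices to prove an upper bound $\tr_{\omega_{\varphi}}\omega\leq C$ on $M$. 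Since the bound is trivial on compact sets, I only need it on a region $\{f_{\varphi}\geq R\}$.

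For the trace bound I would apply the Chern--Lu inequality to the identity map $(M,\omega_{\varphi})\to(M,\omega)$. The background $\omega$ is asymptotically conical with quadratic curvature decay, so its bisectional curvature is bounded above by a constant $B$. The inequality gives
$$\Delta_{\omega_{\varphi}}\log\tr_{\omega_{\varphi}}\omega\geq\frac{\langle\Ric(\omega_{\varphi}),\omega\rangle_{\omega_{\varphi}}}{\tr_{\omega_{\varphi}}\omega}-B\,\tr_{\omega_{\varphi}}\omega.$$
I would then substitute the soliton equation $\Ric(\omega_{\varphi})=\omega_{\varphi}-\frac{1}{2}\mathcal{L}_{X}\omega_{\varphi}$. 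The key point is that the Lie derivative term combines with $X\cdot\log\tr_{\omega_{\varphi}}\omega$. Indeed, $X\cdot\tr_{\omega_{\varphi}}\omega$ equals $\tr_{\omega_{\varphi}}(\mathcal{L}_{X}\omega)$ minus the $\omega$-contraction of $\mathcal{L}_{X}\omega_{\varphi}$. Moreover, $\mathcal{L}_{X}\omega=2\omega-2\rho_{\omega_{0}}$ outside a compact set by Proposition \ref{mainprop}, with $|\rho_{\omega_{0}}|_{\omega}=O(f^{-1})$. Combining these, I expect to obtain, outside a compact set,
$$\Delta_{\omega_{\varphi},\,X}\log\tr_{\omega_{\varphi}}\omega\geq -C-B\,\tr_{\omega_{\varphi}}\omega.$$

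The standard barrier is $\varphi$ itself, since $\Delta_{\omega_{\varphi}}\varphi=n-\tr_{\omega_{\varphi}}\omega$. However, the drift $X\cdot\varphi$ is unbounded, so I would use a combination with bounded drift. By Lemma \ref{lemma-set-eqn-log-vol}, $\Delta_{\omega_{\varphi},X}(\frac{X}{2}\cdot\varphi-\varphi)=\tr_{\omega_{\varphi}}\rho_{\omega_{0}}=O(f^{-1})\tr_{\omega_{\varphi}}\omega$, and the quantity $\frac{X}{2}\cdot\varphi-\varphi$ is bounded. So I would set $Q:=\log\tr_{\omega_{\varphi}}\omega-A\psi$, where $\psi$ is built from $-\varphi$, $\frac{X}{2}\cdot\varphi-\varphi$ and $f_{\varphi}$. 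The aim is $\Delta_{\omega_{\varphi},X}\psi\geq \tr_{\omega_{\varphi}}\omega-C$ with $\psi$ growing at most linearly in $f_{\varphi}$; Corollary \ref{coro-pot-bd} and Proposition \ref{prop-low-bd-phi} control the growth. This yields $\Delta_{\omega_{\varphi},X}Q\geq (A-B)\tr_{\omega_{\varphi}}\omega-C'$.

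Because $Q$ is not known to be bounded a priori, I would not use the pointwise maximum principle directly. Instead I would localise with $Q-\alpha f_{\varphi}$ and let $\alpha\to0$, as in Proposition \ref{prop-low-bd-phi}. Alternatively, I would apply the $L^{2}$-maximum principle of Lemma \ref{l2-ppe-max} to $e^{Q}$ divided by a power of $f_{\varphi}$, using $\tr_{\omega_{\varphi}}\omega\in L^{1}(e^{-f_{\varphi}}\omega_{\varphi}^{n})$ from Proposition \ref{prop-bds-moments}. At a maximum, one gets $\tr_{\omega_{\varphi}}\omega\leq C$ there, and hence $Q\leq C+A\sup\psi$ in a controlled way.

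The main obstacle is the choice of $\psi$. The naive choice $-\varphi$ produces the drift term $X\cdot\varphi$, which is only $O(f)$, while the Chern--Lu computation needs the $\mathcal{L}_{X}$ terms to cancel exactly. I would first try $\psi=-\varphi+\frac{1}{2}X\cdot\varphi=f_{\varphi}-f-\varphi$. Its drift Laplacian can be computed exactly from \eqref{normal12}, \eqref{normal-f} and Lemma \ref{lemma-set-eqn-log-vol}, and it contains $\tr_{\omega_{\varphi}}\omega$ with coefficient $1-O(f^{-1})$. I expect the growth of this $\psi$, which is linear in $f$ by Corollary \ref{coro-dom-varphi-X-der}, to be absorbed by the weighted or $\alpha$-localised maximum principle.
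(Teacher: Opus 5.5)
\textbf{Verdict: there is a genuine gap, in the choice of barrier.} Your reduction to an upper bound on $\tr_{\omega_\varphi}\omega$ is correct: it uses the volume-form bound from \eqref{cmaa} and Proposition \ref{prop-2-t-1}, plus the eigenvalue inequality. Starting from the Chern--Lu inequality for $\Id:(M,\omega_\varphi)\to(M,\omega)$, with $\mathcal{L}_X\omega_\varphi$ absorbed into the drift, is also exactly how the paper begins. The problem is the barrier, and it is not a technicality.

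First, the function you settle on, $\psi=\frac{X}{2}\cdot\varphi-\varphi$, is precisely the one in Lemma \ref{lemma-set-eqn-log-vol}, which you quote yourself: $\Delta_{\omega_\varphi,X}\psi=\tr_{\omega_\varphi}\rho_{\omega_0}$. The $\tr_{\omega_\varphi}\omega$ contributions of $-\varphi$ and $-f$ cancel identically, so there is no term $(1-O(f^{-1}))\tr_{\omega_\varphi}\omega$. This $\psi$ is bounded, but it gains nothing.

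Second, growth of the barrier is not absorbed by either maximum principle you propose, because it reappears in the conclusion. An upper bound $Q\leq C$ only gives $\tr_{\omega_\varphi}\omega\leq Ce^{A\psi}$, which is exponential in $f$ if $\psi$ grows linearly. Linearly growing barriers with a good sign do exist. For example, $\psi=\varphi+(1-c)f$, with $c\in(0,1)$ as in Corollary \ref{coro-pot-bd}, satisfies $\Delta_{\omega_\varphi,X}\psi\leq C-\frac{c}{2}\tr_{\omega_\varphi}\omega$ outside a compact set. Such barriers are useless for exactly this reason.

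There is also a sign slip: for $Q=\log\tr_{\omega_\varphi}\omega-A\psi$ you need $\Delta_{\omega_\varphi,X}\psi\leq C-\tr_{\omega_\varphi}\omega$, not the reverse inequality you wrote.

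So the barrier must be bounded, and the one that works is the paper's $\varphi/f_\varphi$. It is bounded by Propositions \ref{prop-low-bd-phi} and \ref{prop-2-t-1}. By \eqref{barrier} and $\Delta_{g_\varphi}\varphi=2n-2\tr_{\omega_\varphi}\omega$, one has $\Delta_{g_\varphi,X}(\varphi/f_\varphi)=-\frac{2}{f_\varphi}\tr_{\omega_\varphi}\omega+O(f_\varphi^{-1})$. The errors are controlled by $|X\cdot\varphi-2\varphi|\leq C$, $|X|^2_{g_\varphi}\leq 2f_\varphi+C$ and Corollary \ref{coro-pot-bd}.

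Since the gain is only $\frac{2}{f_\varphi}\tr_{\omega_\varphi}\omega$, you cannot discard the decay in the Chern--Lu estimate as you did. Near infinity $\omega=\omega_0+\rho_{\omega_0}$ has bisectional curvature $O(f^{-1})$. Also $1-\tr_{\omega_\varphi}(\frac{1}{2}\mathcal{L}_X\omega)/\tr_{\omega_\varphi}\omega=\tr_{\omega_\varphi}\rho_{\omega_0}/\tr_{\omega_\varphi}\omega\geq-C/f$. The correct estimate is therefore $\frac{1}{2}\Delta_{g_\varphi,X}\log\tr_{\omega_\varphi}\omega\geq-\frac{C}{f}-\frac{C}{f}\tr_{\omega_\varphi}\omega$, not merely $\geq-C-B\tr_{\omega_\varphi}\omega$.

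The paper then proceeds as follows.
\begin{enumerate}
\item Using $f_\varphi\leq Cf$, subtracting a large multiple $C'\varphi/f_\varphi$ beats the bad trace term.
\item Adding $\alpha/f_\varphi$ absorbs the $O(f_\varphi^{-1})$ errors. A non-decaying error $-C$, as in your estimate, could not be absorbed this way.
\item The inequality $\log x\leq x$ then yields $\Delta_{g_\varphi,X}U\geq\frac{\tilde{C}}{f_\varphi}U$ for $U=\log\tr_{\omega_\varphi}\omega-C'\frac{\varphi}{f_\varphi}+\frac{\alpha}{f_\varphi}$.
\item Lemma \ref{l2-ppe-max} is applied to $U$ itself. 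It is not applied to $e^{Q}$, since only $\tr_{\omega_\varphi}\omega\in L^1$ is known, not $L^2$. Nor is it applied to $Q-\alpha f_\varphi$, which is not known to attain a maximum because no pointwise growth bound on $\log\tr_{\omega_\varphi}\omega$ is available.
\item The hypothesis $\log\tr_{\omega_\varphi}\omega\in L^2(e^{-f_\varphi}\omega_\varphi^n)$ holds for two reasons. It is bounded below by AM--GM and the volume-form bound. Above, $|\log\tr_{\omega_\varphi}\omega|^2\leq C(\tr_{\omega_\varphi}\omega+1)$, which is integrable by Proposition \ref{prop-bds-moments}.
\end{enumerate}
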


Before proving this proposition, the next lemma is an appropriate version of Yau's Schwarz lemma \cite{Calabiconj} catered to our purposes. It can be viewed as a parabolic version of it, given the drift term comprising the real holomorphic vector field $X$.

\begin{lemma}[Schwarz lemma]\label{gal-Schwarz-lemma}
Let $s:(M,\omega_{\varphi})\rightarrow (N,\eta)$ be a holomorphic map between two K\"ahler manifolds with associated K\"ahler metrics $g_{\varphi}$ and $h$ respectively, such that $|\partial s|$ never vanishes on $M$. Assume that for some real holomorphic vector field $X$ on $M$, $$\Ric(g_{\varphi})+\frac{1}{2}\mathcal{L}_Xg_{\varphi}\geq g_{\varphi}.$$  Then
\begin{equation*}
\begin{split}
\Delta_{\omega_{\varphi},\,X}\left(\log \tr_{\omega_{\varphi}}(s^*\eta)\right)&\geq 1-\frac{1}{2}\frac{\tr_{\omega_{\varphi}}\left(\mathcal{L}_{X}s^*\eta\right)}{\tr_{\omega_{\varphi}}(s^*\eta)}-\frac{\omega_{\varphi}\otimes_{\tr}\Rm(h)(\partial s,\bar{\partial}s,\partial s,\bar{\partial}s)}{\tr_{\omega_{\varphi}}(s^*\eta)},
\end{split}
\end{equation*}
where
\begin{equation*}
\begin{split}
\omega_{\varphi}\otimes_{\tr}\Rm(h)(\partial s,\bar{\partial}s,\partial s,\bar{\partial}s)&:=g_\varphi^{i\bar{j}}g_\varphi^{k\bar{l}}\Rm(h)_{\alpha\bar{\beta}\gamma\bar{\delta}}\,\partial_i s^{\alpha} \,\overline{\partial_{j} s^{\beta}}\,\partial_ks^{\gamma}\,\overline{\partial_{l} s^{\delta}}
\end{split}
\end{equation*}
and
\begin{equation*}
\begin{split}
(s^*\eta,s^*\eta)_{\omega_{\varphi}}&=(\tr_{\omega_{\varphi}}(s^*\eta))^2-n(n-1)\left(\frac{s^*\eta\wedge s^*\eta\wedge\omega_{\varphi}^{n-2}}{\omega_{\varphi}^{n}}\right).
\end{split}
\end{equation*}
\end{lemma}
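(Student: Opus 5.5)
This is the classical Chern--Lu/Yau computation, modified to account for the drift term. We work in holomorphic normal coordinates for $g_{\varphi}$ at a point $x$ and for $h$ at $s(x)$, and write $u:=\tr_{\omega_{\varphi}}(s^*\eta)=g_\varphi^{i\bar j}h_{\alpha\bar\beta}\partial_i s^\alpha\overline{\partial_j s^\beta}$.

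The first step is the standard Chern--Lu identity
\begin{equation*}
\Delta_{\omega_{\varphi}}u=g_\varphi^{i\bar j}g_\varphi^{k\bar l}\Ric(g_\varphi)_{k\bar j}\,h_{\alpha\bar\beta}\partial_i s^\alpha\overline{\partial_l s^\beta}-\omega_{\varphi}\otimes_{\tr}\Rm(h)(\partial s,\bar{\partial}s,\partial s,\bar{\partial}s)+|\nabla\partial s|^2 .
\end{equation*}
The Ricci term arises from commuting covariant derivatives of the section $\partial s\in\Gamma(T^*M\otimes s^*TN)$, and the $h$-curvature term comes from the pullback connection. This is purely local and needs no assumption on $X$.

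The second step inserts the soliton inequality. Since $X$ is real holomorphic, the assumption $\Ric(g_\varphi)+\frac12\mathcal{L}_Xg_\varphi\geq g_\varphi$ is an inequality of hermitian $(1,1)$-forms. Contracting it against the nonnegative hermitian form $s^*\eta$ gives
\begin{equation*}
g_\varphi^{i\bar j}g_\varphi^{k\bar l}\Ric_{k\bar j}(s^*\eta)_{i\bar l}\geq u-\tfrac12\langle \mathcal{L}_X\omega_{\varphi},s^*\eta\rangle_{\omega_\varphi}.
\end{equation*}
Next I compute $X\cdot u$. Because $X$ is real holomorphic, $\mathcal{L}_X$ commutes with contractions, so
\begin{equation*}
X\cdot u=\mathcal{L}_X\big(\tr_{\omega_\varphi}s^*\eta\big)=\tr_{\omega_\varphi}(\mathcal{L}_Xs^*\eta)-\langle\mathcal{L}_X\omega_\varphi,s^*\eta\rangle_{\omega_\varphi}.
\end{equation*}
Subtracting this from the Laplacian identity, the two $\langle\mathcal{L}_X\omega_\varphi,s^*\eta\rangle$ terms combine, and this is where the drift is absorbed. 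Care is needed here with the factor $\frac12$ and with the conventions relating the real Laplacian to the trace of the complex Hessian. One should get
\begin{equation*}
\Delta_{\omega_\varphi,X}u\geq u-\tfrac12\tr_{\omega_\varphi}(\mathcal{L}_Xs^*\eta)-\omega_{\varphi}\otimes_{\tr}\Rm(h)(\partial s,\bar\partial s,\partial s,\bar\partial s)+|\nabla\partial s|^2,
\end{equation*}
with the coefficients in front of $\langle\mathcal{L}_X\omega_\varphi,s^*\eta\rangle$ exactly cancelling once the normalisation of $\Delta_{\omega_\varphi,X}$ is fixed consistently with the earlier sections.

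The third step passes to the logarithm. Since $|\partial s|\neq0$, we have $u>0$ and $\log u$ is smooth, and
\begin{equation*}
\Delta_{\omega_\varphi,X}\log u=\frac{\Delta_{\omega_\varphi,X}u}{u}-\frac{|\nabla u|^2}{u^2}.
\end{equation*}
Yau's Cauchy--Schwarz inequality $|\nabla u|^2\leq u\,|\nabla\partial s|^2$ then lets the $|\nabla\partial s|^2$ term absorb the gradient term. Dividing by $u$ gives the stated inequality. The quantity $(s^*\eta,s^*\eta)_{\omega_\varphi}$ only enters if one wishes to refine the curvature term using bisectional bounds on $h$; it is not needed for the inequality itself.

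I expect the main obstacle to be the bookkeeping in the second step. One must verify that the drift term reproduces exactly $-\frac12\tr_{\omega_\varphi}(\mathcal{L}_Xs^*\eta)/u$ with the correct sign and coefficient, which depends on the paper's normalisation of the Laplacian. I would fix that normalisation by checking it on the model case $s=\mathrm{id}$, $\eta=\omega_\varphi$, where $u=n$ and both sides can be computed directly, and then run the general computation.
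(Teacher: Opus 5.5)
Your proposal is correct and is essentially the paper's argument. The paper simply invokes the classical Chern--Lu computation \cite[Theorem 3.2.6]{Bou-Eys-Gue} together with the identity $X\cdot\tr_{\omega_{\varphi}}(s^*\eta)=\tr_{\omega_{\varphi}}(\mathcal{L}_{X}s^*\eta)-(\mathcal{L}_{X}\omega_{\varphi},s^*\eta)_{\omega_{\varphi}}$, exactly as you do. With the paper's convention $\Delta_{\omega_{\varphi},X}=\Delta_{\omega_{\varphi}}-\frac{X}{2}\cdot$, where $\Delta_{\omega_\varphi}$ is the complex Laplacian (equivalently $\frac{1}{2}\Delta_{g_{\varphi},X}$, as in \eqref{denali}), the two $(\mathcal{L}_X\omega_\varphi,s^*\eta)_{\omega_\varphi}$ terms cancel exactly; note that your proposed check with $s=\Id_M$, $\eta=\omega_\varphi$ cannot fix this normalisation, since $\log\tr_{\omega_\varphi}\omega_\varphi=\log n$ is constant and the inequality there merely reduces to the traced hypothesis.
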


\begin{proof}
This is essentially taken from \cite[Theorem 3.2.6]{Bou-Eys-Gue} and \cite[Section 7]{Jef-Maz-Rub}. Here we use the identity
\begin{equation*}
X\cdot  \tr_{\omega_{\varphi}}(s^*\eta)=\tr_{\omega_{\varphi}}\left(\mathcal{L}_{X}s^*\eta\right)-\left(\mathcal{L}_{X}\omega_{\varphi},s^*\eta\right)_{\omega_{\varphi}}.
\end{equation*}
\end{proof}

\begin{proof}[Proof of Proposition \ref{equiv-met-prop-C^2}]
Applying Lemma \ref{gal-Schwarz-lemma} to $(M,\omega_{\varphi})$ and $(N,\eta)=(M,\omega)$ with $s=\Id_M$, we see that
\begin{equation}\label{denali}
\begin{split}
\frac{1}{2}\Delta_{g_{\varphi},\,X}\left(\log\tr_{\omega_{\varphi}}\omega\right)&\geq 1-\frac{\tr_{\omega_{\varphi}}\left(\frac{1}{2}\mathcal{L}_X\omega\right)}{\tr_{\omega_{\varphi}}\omega}-\frac{C}{f}\left(\tr_{\omega_{\varphi}}\omega\right)\\
&\geq -\frac{C}{f}-\frac{C}{f}\left(\tr_{\omega_{\varphi}}\omega\right)
\end{split}
\end{equation}
for some positive constant $C>0$. Using $\frac{\varphi}{f_{\varphi}}$ as a barrier function as in 
the proof of Proposition \ref{prop-2-t-1}, we next derive from \eqref{barrier} and \eqref{denali} that
\begin{equation*}
\begin{split}
\Delta_{g_{\varphi},\,X}\left(\log\tr_{\omega_{\varphi}}\omega-C'\left(\frac{\varphi}{f_{\varphi}}\right)\right)&\geq -\frac{C}{f}-\frac{C}{f}\tr_{\omega_{\varphi}}\omega\\
&\qquad-C'\left(\frac{\Delta_{g_{\varphi}}\varphi}{f_{\varphi}}-\frac{X\cdot\varphi}{f_{\varphi}}
-2\left(\frac{X\cdot\varphi}{f_{\varphi}^2}\right)
+2\left(\frac{\varphi}{f_{\varphi}}\right)+2\varphi\left(\frac{X\cdot f_{\varphi}}{f_{\varphi}^3}\right)\right)\\
&\geq -\frac{C}{f}-\frac{2nC'}{f_{\varphi}}+\left(\frac{2C'}{f_{\varphi}}-\frac{C}{f}\right)\tr_{\omega_{\varphi}}\omega\\
&\qquad+\frac{C'}{f_{\varphi}}\left(X\cdot\varphi-2\varphi+2\left(\frac{X\cdot\varphi}{f_{\varphi}}\right)
-2\varphi\left(\frac{X\cdot f_{\varphi}}{f_{\varphi}^2}\right)\right),
\end{split}
\end{equation*}
where we have used the fact that $\Delta_{g_{\varphi}}\varphi=2n-2\tr_{\omega_{\varphi}}\omega$ in the second inequality.

Now, from the bounds on $X\cdot \varphi$ and $\varphi$ given by Proposition \ref{prop-2-t-1},
we see that there exists a positive constant $C''>0$ such that
\begin{equation*}
\begin{split}
\Delta_{g_{\varphi},\,X}\left(\log\tr_{\omega_{\varphi}}\omega-C'\left(\frac{\varphi}{f_{\varphi}}\right)\right)&\geq-\frac{C}{f}-\frac{C''}{f_{\varphi}}+\left(\frac{2C'}{f_{\varphi}}-\frac{C}{f}\right)\tr_{\omega_{\varphi}}\omega+\frac{C'}{f_{\varphi}}\left(X\cdot\varphi-2\varphi\right)\\
&\geq-\frac{C'''}{f_{\varphi}}+\left(\frac{2C'}{f_{\varphi}}-\frac{C}{f}\right)\tr_{\omega_{\varphi}}\omega,
\end{split}
\end{equation*}
for some positive constant $C'''>0$ that may vary from line to line. Here, we use Proposition \ref{prop-2-t-1} 
and Corollary \ref{coro-pot-bd} to justify the last inequality. Akin to the proof of Claim \ref{claim-3-low-bd}, there exists a large constant $\alpha>0$ such that
\begin{equation*}
\begin{split}
\Delta_{g_{\varphi},\,X}\left(\log\tr_{\omega_{\varphi}}\omega-C'\left(\frac{\varphi}{f_{\varphi}}\right)+\frac{\alpha}{f_{\varphi}}\right)&\geq \left(\frac{2C'}{f_{\varphi}}-\frac{C}{f}\right)\tr_{\omega_{\varphi}}\omega\\
&\geq\left(\frac{C'}{f_{\varphi}}\right)\tr_{\omega_{\varphi}}\omega\\
&\geq\left(\frac{\tilde{C}}{f_{\varphi}}\right)e^{-C'\left(\frac{\varphi}{f_{\varphi}}\right)+\frac{\alpha}{f_{\varphi}}}\tr_{\omega_{\varphi}}\omega\\
&\geq \frac{\tilde{C}}{f_{\varphi}} \left(\log\tr_{\omega_{\varphi}}\omega-C'\frac{\varphi}{f_{\varphi}}+\frac{\alpha}{f_{\varphi}}\right)
\end{split}
\end{equation*}
for some positive constant $\tilde{C}>0$. Here, we choose $C'>0$ sufficiently large so that $C'f\geq Cf_{\varphi}$ in the second line. This we can do by Proposition \ref{prop-1-t-1}.
In the third inequality, we use the fact that $\frac{\varphi}{f_{\varphi}}$ is bounded from below by Proposition \ref{prop-low-bd-phi}, and in the last line 
we use the elementary non-sharp inequality $\log x\leq x$ for $x>0$. 

Now, observe that $\log \tr_{\omega_{\varphi}}\omega\in L^2(e^{-f_{\varphi}}\omega_{\varphi}^n)$. Indeed, on the one hand, the arithmetic-geometric mean inequality together with \eqref{cmaa}
gives us that
\begin{equation*}
\log \tr_{\omega_{\varphi}}\omega\geq \log n -\frac{1}{n}\log\left(\frac{\omega_{\varphi}^n}{\omega^n}\right)=\log n-\frac{1}{2n}\left(X\cdot \varphi-2\varphi+2F\right)\geq -C
\end{equation*}
for some positive constant $C>0$, thanks to Proposition \ref{prop-2-t-1}. On the other hand, since there exists $C>0$ such that $\log x\leq \sqrt{x}+C$ for $x>0$, 
we see that $\log \tr_{\omega_{\varphi}}\omega\leq \sqrt{\tr_{\omega_{\varphi}}\omega}+C$. After applying Young's inequality, we therefore deduce that there exists a positive constant $C>0$ such that on $M$, $|\log \tr_{\omega_{\varphi}}\omega|^2\leq C\left(\tr_{\omega_{\varphi}}\omega+1\right)$. Proposition \ref{prop-bds-moments} now ensures that $\log \tr_{\omega_{\varphi}}\omega\in L^2(e^{-f_{\varphi}}\omega_{\varphi}^n).$

To conclude, we apply Lemma \ref{l2-ppe-max} to $U:=\log\tr_{\omega_{\varphi}}\omega-C'\left(\frac{\varphi}{f_{\varphi}}\right)+\frac{\alpha}{f_{\varphi}}$, $G(f_{\varphi}):=f_{\varphi}$, and $V:=\frac{\tilde{C}}{f_{\varphi}}$, which leads to the boundedness of $U$. This in turn gives the boundedness of the function $\log\tr_{\omega_{\varphi}}\omega$ thanks to Proposition \ref{prop-2-t-1}.
\end{proof}

The next proposition establishes an a priori $C^3$-local bound on the function $\varphi$ satisfying \eqref{cmaa}. The method of proof follows along the lines of \cite[Proposition 6.9]{con-der}, which itself is based on \cite{Pho-Ses-Stu}.

\begin{prop}[{$C^3$-bounds on the difference $\omega_{\varphi}-\omega$}]\label{prop-bds-C3}
Let $\varphi$ satisfy \eqref{cmaa}.
Then there exists $C>0$ such that $|\nabla^{g} (\omega_{\varphi}-\omega)|_g\leq C,$
where $g$ is the K\"ahler metric corresponding to the background K\"ahler form 
$\omega$ from Proposition \ref{mainprop}(i).
\end{prop}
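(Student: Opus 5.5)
We follow the Calabi-type $C^3$ estimate of Phong--Sesum--Sturm, adapted to the drift Laplacian as in \cite[Proposition 6.9]{con-der}. Set
\[
S:=|\nabla^{g}g_{\varphi}|^2_{g_{\varphi}}=g_{\varphi}^{i\bar{p}}g_{\varphi}^{q\bar{j}}g_{\varphi}^{k\bar{r}}\nabla_i g_{\varphi\,q\bar{r}}\,\overline{\nabla_{j} g_{\varphi\,p\bar{k}}},
\]
where $\nabla=\nabla^{g}$ is the Levi-Civita connection of the background metric. By Proposition \ref{equiv-met-prop-C^2}, $g$ and $g_{\varphi}$ are uniformly equivalent, so bounding $S$ is the same as bounding $|\nabla^{g}(\omega_{\varphi}-\omega)|_{g}$. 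Equivalently, $S=|\Psi|^2_{g_{\varphi}}$, where $\Psi:=\Gamma(g_{\varphi})-\Gamma(g)$ is the difference of connections.

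\textbf{Step 1: an inequality for $\Delta_{g_{\varphi},X}S$.} Differentiate \eqref{cmaa} twice, in the form
\[
\log\frac{\omega_{\varphi}^{n}}{\omega^{n}}=F+\tfrac{X}{2}\cdot\varphi-\varphi .
\]
The standard computation gives
\[
\Delta_{g_{\varphi}}S\geq |\nabla^{g_{\varphi}}\Psi|^2+|\overline{\nabla}^{g_{\varphi}}\Psi|^2-C\big(|\Rm(g)|_{g}+|\nabla^{g}\Rm(g)|_{g}^{\cdot}+|\nabla^{g,2}F|\big)(S+\sqrt S)+(\text{terms from }X).
\]
Here the $|\nabla^{g}\Rm(g)|$ term enters through $\sqrt{S}$. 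The background curvature quantities are bounded by \eqref{bds-cov-der-f} and Proposition \ref{mainprop}(i), and the derivatives of $F$ are bounded because $F=c_0-\frac{s_{\omega_0}}{2}+O(r^{-4})$.

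\textbf{Step 2: the drift terms.} The term $\tfrac{X}{2}\cdot\varphi$ is handled using $\mathcal{L}_{X}\omega_{\varphi}=i\partial\bar\partial(X\cdot\varphi)$ and the fact that $X$ is real holomorphic. Since $\mathcal{L}_{X}\Psi$ is the difference of the Lie derivatives of the two connections, third derivatives of $\tfrac{X}{2}\cdot\varphi$ combine with $X\cdot S$ to give $\Delta_{g_{\varphi},X}$. What remains is a zeroth-order term of the form
\[
\Psi*\nabla^{g}\nabla^{g}X+S\cdot(\nabla X\text{ terms}).
\]
Here $\nabla^{g}X=\mathrm{Hess}_g f$ is $g+O(f^{-1})$ by \eqref{bds-cov-der-f}, and $\nabla^{g,2}X=O(f^{-3/2})$. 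The $g$-part of $\nabla^{g}X$ gives exactly the scaling term. This should yield
\[
\Delta_{g_{\varphi},X}S\geq -S-C\sqrt{S}-C ,
\]
up to a sign that must be tracked carefully, possibly with a good term $+S$ instead. I expect this bookkeeping to be the main obstacle: one needs the linear coefficient of $S$ to have a favourable sign, or at least to be absorbable.

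\textbf{Step 3: the barrier.} Add $A\,\tr_{\omega}\omega_{\varphi}$, or equivalently a multiple of $\tr_{\omega_{\varphi}}\omega$. The second-order computation behind Proposition \ref{equiv-met-prop-C^2} (Aubin--Yau/Chern--Lu) gives
\[
\Delta_{g_{\varphi},X}\tr_{\omega}\omega_{\varphi}\geq c\,S-C ,
\]
using the uniform equivalence of the metrics. Choosing $A$ large, $Q:=S+A\tr_{\omega}\omega_{\varphi}$ then satisfies
\[
\Delta_{g_{\varphi},X}Q\geq \epsilon Q-C
\]
on $M$.

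\textbf{Step 4: concluding with the $L^2$ maximum principle.} The space is noncompact and $S$ has no a priori growth control, so we cannot apply the maximum principle directly. Instead we apply Lemma \ref{l2-ppe-max} to $U:=Q-C/\epsilon$ with $G=f_{\varphi}$ and $V=\epsilon$. This requires $Q\in L^2(e^{-f_{\varphi}}\omega_{\varphi}^n)$, which I would obtain by integrating $\Delta_{g_{\varphi},X}$ of $\tr_\omega\omega_\varphi$ against $\psi_R^2e^{-f_\varphi}$. This gives $\int S\psi_R^2e^{-f_{\varphi}}$ bounded, and for $S^2$ one iterates with Step 1, as in Proposition \ref{prop-bds-moments}. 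If this integrability proves delicate, the alternative is a localized version: multiply by a cutoff in $f_{\varphi}$ and use the Cheng--Yau argument on balls of unit size, which suffices since all constants are uniform by \eqref{bds-cov-der-f}. Either route gives $S\leq C$, which proves the proposition.
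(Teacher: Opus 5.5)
There is a genuine gap in Step 4. Applying Lemma \ref{l2-ppe-max} to $U=Q-C/\epsilon$ with $Q=S+A\tr_{\omega}\omega_{\varphi}$ and $G=f_{\varphi}$ requires $U\in L^{2}(e^{-f_{\varphi}}\omega_{\varphi}^{n})$. Since $\tr_{\omega}\omega_{\varphi}$ is bounded, this amounts to $\int_{M}S^{2}e^{-f_{\varphi}}\omega_{\varphi}^{n}<\infty$. Integrating $\Delta_{\omega_{\varphi},X}\tr_{\omega}\omega_{\varphi}\geq cS-C$ against $\psi_{R}e^{-f_{\varphi}}$ only gives $S\in L^{1}(e^{-f_{\varphi}}\omega_{\varphi}^{n})$; this is precisely Lemma \ref{lemma-int-S}. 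Your proposed iteration does not upgrade it. The moment argument of Proposition \ref{prop-bds-moments} works for $u=f^{2k}$ only because $|X\cdot f^{2k}|\lesssim f^{2k}+1$. Lemma \ref{lemma-moments} with $u=S^{2}$ would instead need control of $X\cdot S$, which you do not have. Integrating Step 1 against cutoffs yields only $\int_{M}(|\nabla^{g_{\varphi}}\Psi|^{2}+|\overline{\nabla}^{g_{\varphi}}\Psi|^{2})e^{-f_{\varphi}}\omega_{\varphi}^{n}<\infty$, and no weighted Sobolev inequality is available to turn this into an $L^{4}$ bound on $\sqrt{S}$. The localized fallback also fails as stated. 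The inequality $\Delta_{g_{\varphi},X}Q\geq\epsilon Q-C$ is linear in $Q$, so a cutoff maximum-principle argument cannot absorb the error terms created by the cutoff; that would require a superlinear good term. Moreover, the drift $|X|_{g_{\varphi}}$ is unbounded (of size $\sqrt{f_{\varphi}}$), so the constants on unit balls are not uniform.

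The missing idea is to pass to $\sqrt{S}$, which lowers the integrability requirement to exactly what you have. The paper also settles your sign question in Step 2, and favourably. It views $g_{\varphi}(\tau)=(-\tau)(\varphi^{X}_{\tau})^{*}g_{\varphi}$ and $g(\tau)=(-\tau)(\varphi^{X}_{\tau})^{*}g$ as (approximate) self-similar K\"ahler--Ricci flows. Then $S(\tau)=(-\tau)^{-1}(\varphi^{X}_{\tau})^{*}S$ gives $\partial_{\tau}S|_{\tau=-1}=S+\frac{X}{2}\cdot S$, and comparing with the brute-force formula for $\Delta_{\omega_{\varphi}}S$ yields (Lemma \ref{lemma-S})
\[
\Delta_{\omega_{\varphi},X}S\geq|\nabla^{g_{\varphi}}\Psi|^{2}+|\overline{\nabla}^{g_{\varphi}}\Psi|^{2}+S-C|\Rm(g)|_{g}S-C|\nabla^{g}\Rm(g)|_{g}\sqrt{S}.
\]
By Kato, $\Delta_{\omega_{\varphi},X}\sqrt{S}\geq\frac14\sqrt{S}-C$ outside a compact set. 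So Lemma \ref{l2-ppe-max} applies to $\sqrt{S}-C$ with $G=f_{\varphi}$ and $V=\frac14$, needing only $S\in L^{1}$; no trace barrier enters the differential inequality. Your scheme can be repaired the same way even without the good sign. Keeping the gradient terms of Step 1, Kato gives $\Delta_{\omega_{\varphi},X}\sqrt{S}\geq-C\sqrt{S}-C$. Hence $\Delta_{\omega_{\varphi},X}(\sqrt{S}+A\tr_{\omega}\omega_{\varphi})\geq\epsilon(\sqrt{S}+A\tr_{\omega}\omega_{\varphi})-C'$, because $AcS$ dominates $C\sqrt{S}$. This quantity lies in $L^{2}(e^{-f_{\varphi}}\omega_{\varphi}^{n})$ as soon as $S\in L^{1}$.
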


Before proving Proposition \ref{prop-bds-C3}, we need a lemma dictating the evolution equation of the norm squared of the covariant derivative of the difference $g_{\varphi}-g$.
To this end, we set
\begin{equation}\label{def-S}
S:=S(g_{\varphi},\,g):=\arrowvert\nabla^{g}g_{\varphi}\arrowvert^2_{g_{\varphi}}.
\end{equation}
Then from the definition of $S$, we see that
\begin{equation*}
\begin{split}
S=&g_{\varphi}^{i\bar{\jmath}}g_{\varphi}^{k\bar{l}}g_{\varphi}^{p\bar{q}}\nabla^{g}_i(g_{\varphi})_{k\bar{q}}\overline{\nabla^{g}_{j}(g_{\varphi})_{l\bar{p}}}=|\Psi|_{g_{\varphi}}^2,
\end{split}
\end{equation*}
where
\begin{equation}\label{def-Psi}
\begin{split}
\Psi_{ij}^k=\Psi_{ij}^k(g_{\varphi},\,g)&:=\Gamma(g_{\varphi})_{ij}^k-\Gamma(g)_{ij}^k=g_{\varphi}^{k\bar{l}}\nabla^{g}_i(g_{\varphi})_{j\bar{l}}.
\end{split}
\end{equation}
Define the ``half'' Laplacian with respect to the K\"ahler metric $\omega_{\varphi}$ as follows:
\begin{equation}\label{def-lap-half}
\Delta_{\omega_{\varphi},\,1/2}:=g_{\varphi}^{i\bar{\jmath}}\nabla^{g_{\varphi}}_i\nabla^{g_{\varphi}}_{\bar{\jmath}}.
\end{equation}
Then we have:
\begin{lemma}\label{lemma-S}
The function $S$ satisfies
\begin{equation*}
\begin{split}
\Delta_{\omega_{\varphi},\,X}S
&\geq |\nabla^{g_{\varphi}} \Psi|^2_{g_{\varphi}}+|\overline{\nabla}^{g_{\varphi}}\Psi|_{g_{\varphi}}^2+S-
C(n,\tr_{g}g_{\varphi},\tr_{g_{\varphi}}g)|\Rm(g)|_gS\\
&\qquad-C(n,\tr_{g}g_{\varphi},\tr_{g_{\varphi}}g))|\nabla^{g}\Rm(g)|_{g}\sqrt{S}.
\end{split}
\end{equation*}
Here, $C(\cdot,\,\cdot\,,\,\cdot)$ denotes a function that is increasing in each of its arguments
and $\overline{\nabla}^g=\nabla^{0,\,1}$. 
In particular, $\sqrt{S}$ weakly satisfies
\begin{equation*}
\begin{split}
\Delta_{\omega_{\varphi},\,X}\sqrt{S}&\geq \frac{1}{2}\sqrt{S}
-C(n,\tr_{g}g_{\varphi},\tr_{g_{\varphi}}g)|\Rm(g)|_g\sqrt{S}-C(n,\tr_{g}g_{\varphi},\tr_{g_{\varphi}}g))|\nabla^{g}\Rm(g)|_{g}.
\end{split}
\end{equation*}
\end{lemma}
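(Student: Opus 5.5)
This is the Calabi/Yau $C^3$-type computation in the form of Phong--Sesum--Sturm, adapted to the drift Laplacian. Throughout I work in holomorphic normal coordinates for $g_{\varphi}$ at a point, using the formula $\Psi^k_{ij}=g_{\varphi}^{k\bar l}\nabla^g_i(g_{\varphi})_{j\bar l}$ from \eqref{def-Psi}. The basic observation is that $\Psi$ is a tensor with the commutation property
\[
\nabla^{g_{\varphi}}_{\bar q}\Psi^k_{ij}=-\Rm(g_{\varphi})_{i\bar q j}{}^{k}+\Rm(g)_{i\bar q j}{}^{k}.
\]

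The first step is to compute $\Delta_{\omega_{\varphi},1/2}$ and its conjugate applied to $|\Psi|^2_{g_{\varphi}}$. This gives
\[
\Delta_{\omega_{\varphi}}S=|\nabla^{g_{\varphi}}\Psi|^2+|\overline{\nabla}^{g_{\varphi}}\Psi|^2+2\Re\langle \Delta\Psi,\Psi\rangle .
\]
In the term $\Delta\Psi$, commuting $g_{\varphi}^{p\bar q}\nabla_p\nabla_{\bar q}\Psi$ produces Ricci terms of $g_{\varphi}$ acting on the three indices of $\Psi$. It also produces the $\nabla^{g_{\varphi}}$-divergence of $\Rm(g_{\varphi})-\Rm(g)$. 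The divergence of $\Rm(g_{\varphi})$ is converted by the second Bianchi identity into $\nabla^{g_{\varphi}}\Ric(g_{\varphi})$. The divergence of $\Rm(g)$ becomes $\nabla^{g}\Rm(g)$ plus $\Psi * \Rm(g)$, after switching connections via $\nabla^{g_{\varphi}}-\nabla^g=\Psi$.

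The second step uses the soliton equation, in the form $\Ric(g_{\varphi})=g_{\varphi}-\nabla^{g_{\varphi}}\bar\nabla^{g_{\varphi}}f_{\varphi}$ with $X^{1,0}=\nabla^{1,0}f_{\varphi}$ holomorphic. This replaces the Ricci terms by the identity minus $\nabla X$. The key point is that the term $\nabla^{g_{\varphi}}\Ric(g_{\varphi})$, together with the Ricci-contracted terms, combines exactly into $X\cdot S$ plus a positive multiple of $S$. Concretely, since $X$ is real holomorphic, $\mathcal{L}_X\Psi=\nabla_X\Psi+\Psi*\nabla X$ and $\mathcal{L}_X\Psi = \Gamma$-difference of $\mathcal{L}_Xg_{\varphi}$ and $\mathcal{L}_Xg$. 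The contribution from $\mathcal{L}_Xg_{\varphi}=2(g_{\varphi}-\Ric)$ yields no derivative of the difference beyond what is absorbed.

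The genuinely new ingredient compared with the Calabi--Yau case is the drift. I expect the cleanest way to handle it is the following: the identity $\nabla\Ric(g_{\varphi})=-\nabla\nabla\bar\nabla f_{\varphi}$, with holomorphicity of $\nabla^{1,0} f_{\varphi}$, gives $\nabla_i\nabla_j\nabla_{\bar l}f_\varphi = \Rm * \nabla f_\varphi$ terms. Together with the counting of $\Ric$-terms, which contribute $+\,\text{(number of indices with sign)}\cdot S$, this gives the net $+S$ on the right-hand side. The background terms involve $\mathcal{L}_X g$ and $\nabla^g\mathcal{L}_Xg$. These are bounded using \eqref{bds-cov-der-f}, since $\nabla^{g,2}f-g$ decays and $X=\nabla^g f$. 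They contribute only terms controlled by $|\Rm(g)|$, $|\nabla^g\Rm(g)|$ and $\sqrt S$, with coefficients depending on $\tr_g g_{\varphi}$ and $\tr_{g_{\varphi}}g$ from converting $g$-norms to $g_{\varphi}$-norms. I expect this bookkeeping of the drift terms, so that the sign of the $+S$ comes out right, to be the main obstacle. The first thing to try is to compute $\mathcal{L}_X\Psi$ directly as the difference of Christoffel symbols of $\mathcal{L}_Xg_{\varphi}$ and $\mathcal{L}_Xg$, rather than commuting covariant derivatives.

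The final step is the inequality for $\sqrt S$. I use Kato's inequality $|\nabla\sqrt S|^2\le \frac{1}{2}\cdot\frac{|\nabla\Psi|^2+|\overline\nabla\Psi|^2}{1}$ (in the Kähler form $|\nabla|\Psi||^2\le \frac{1}{2}(|\nabla\Psi|^2+|\bar\nabla\Psi|^2)$). Wherever $S>0$ this gives
\[
\Delta_{X}\sqrt S=\frac{\Delta_XS}{2\sqrt S}-\frac{|\nabla S|^2}{4S^{3/2}}\ge \frac{1}{2}\sqrt S-C|\Rm(g)|\sqrt S-C|\nabla\Rm(g)|,
\]
and the inequality holds weakly across $\{S=0\}$ by the standard regularisation with $\sqrt{S+\epsilon}$.
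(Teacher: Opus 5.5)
Your plan is correct and is the same computation as the paper's, only written elliptically. The paper regards $g_\varphi$ and $g$ as self-similar Kähler--Ricci flows and combines the parabolic Phong--Sesum--Sturm computation with $\partial_\tau S|_{\tau=-1}=S+\frac{X}{2}\cdot S$. That time derivative is exactly your $\mathcal{L}_{X/2}$-bookkeeping (metric factors from $\mathcal{L}_{X/2}g_\varphi=g_\varphi-\Ric(g_\varphi)$, and $\mathcal{L}_{X/2}\Psi$ as a difference of Christoffel variations), so the $\nabla\Ric(g_\varphi)$ terms cancel and the $+S$ you worried about is just the weight $-(1+1-1)$ of the three metric factors, i.e.\ the scaling factor $(-\tau)^{-1}$ in the paper; the Lie-derivative route you list as ``the first thing to try'' is the one to use, and the detour through $\nabla_i\nabla_j\nabla_{\bar l}f_\varphi=\Rm*\nabla f_\varphi$ is unnecessary.
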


\begin{proof}
We closely follow the proof of \cite[Proposition 6.9]{con-der}, which itself is based on \cite{Pho-Ses-Stu}.

Since $\varphi$ solves \eqref{cmaa}, $(M,\,g_{\varphi},\,X)$
is a shrinking gradient K\"ahler-Ricci soliton, which gives rise to a self-similar solution of the K\"ahler-Ricci flow in the following way:
if $g_{\varphi}(\tau):=(-\tau)(\varphi^{X}_{\tau})^*g_{\varphi}$ and $g(\tau):=(-\tau)(\varphi^{X}_{\tau})^*g$,
where $(\varphi^{X}_{\tau})_{\tau\,<\,0}$ is the
one-parameter family of diffeomorphisms generated by $\frac{X}{2(-\tau)}$ such that $\varphi^{X}_{\tau}\big|_{\tau\,=\,-1}=\Id_M$,
then $(g_{\varphi}(\tau))_{\tau\,<\,-1}$
is a solution of the  K\"ahler-Ricci flow with ``initial condition'' $g_{\varphi}$, i.e.,
\begin{equation*}
\begin{split}
\partial_{\tau}g_{\varphi}(\tau)\big|_{\tau\,=\,-1}&=-g_{\varphi}+\mathcal{L}_{\frac{X}{2}}g_{\varphi}\\
&=-\Ric(g_{\varphi}),\qquad g_{\varphi}(\tau)\big|_{\tau\,=\,-1}=g_{\varphi}.
 \end{split}
\end{equation*}
Similarly, by definition of the data $F$ introduced in Proposition \ref{mainprop}(i), we see that outside a compact set of $M$,
\begin{equation*}
\begin{split}
\partial_{\tau}g(\tau)\big|_{\tau\,=\,-1}&=-g+\mathcal{L}_{\frac{X}{2}}g\\
&=-\Ric(g)+\partial\bar{\partial}F,\qquad g(\tau)\big|_{\tau\,=\,-1}=g.
 \end{split}
\end{equation*}

Define $S(\tau):=S(g_{\varphi}(\tau),\,g(\tau))$, and correspondingly set $\Psi(\tau):=\Psi(g_{\varphi}(\tau),\,g(\tau))$. We adapt \cite[Proposition 3.2.8]{Bou-Eys-Gue} to our setting. By a
brute force computation, we have that
\begin{equation}
\begin{split}\label{brute-force-S}
\Delta_{\omega_{\varphi}}S&=2\Re\left(g_{\varphi}^{i\bar{\jmath}}g_{\varphi}^{p\bar{q}}(g_{\varphi})_{k\bar{l}}\left(\Delta_{\omega_{\varphi},\,1/2}\Psi_{ip}^k\right)
\overline{\Psi_{jq}^l}\right)+|\nabla^{g_{\varphi}} \Psi|^2_{g_{\varphi}}+|\overline{\nabla}^{g_{\varphi}}\Psi|_{g_{\varphi}}^2\\
&\qquad+\Ric(g_{\varphi})^{i\bar{\jmath}}g_{\varphi}^{p\bar{q}}(g_{\varphi})_{k\bar{l}}\Psi_{ip}^k\overline{\Psi_{jq}^l}
+g_{\varphi}^{i\bar{\jmath}}\Ric(g_{\varphi})^{p\bar{q}}(g_{\varphi})_{k\bar{l}}\Psi_{ip}^k\overline{\Psi_{jq}^l}-g_{\varphi}^{i\bar{\jmath}}g_{\varphi}^{p\bar{q}}\Ric(g_{\varphi})_{k\bar{l}}\Psi_{ip}^k\overline{\Psi_{jq}^l},
\end{split}
\end{equation}
where
\begin{equation*}
\begin{split}
&T^{i\bar{\jmath}}:=g_{\varphi}^{i\bar{l}}g_{\varphi}^{k\bar{\jmath}}T_{k\bar{l}},
\end{split}
\end{equation*}
for $T_{k\bar{l}}\in\Lambda^{1,\,0}M\otimes\Lambda^{0,\,1}M$. Since this flow is evolving only by diffeomorphism and scalings, we also know that
\begin{equation}
\begin{split}\label{time-der-S-I}
S(\tau)&=(-\tau)^{-1}(\varphi^{X}_{\tau})^*S(g_{\varphi},\,g),\\
\partial_{\tau}S|_{\tau\,=\,-1}&=S+\frac{X}{2}\cdot S.
\end{split}
\end{equation}
In addition, by writing $S$ in terms of the tensor $\Psi$, we have that
\begin{equation}
\begin{split}\label{time-der-S-II}
\partial_{\tau}S|_{\tau\,=\,-1}&=-\left(\frac{1}{2}\mathcal{L}_{X}g_{\varphi}-g_{\varphi}\right)^{i\bar{\jmath}}g_{\varphi}^{p\bar{q}}(g_{\varphi})_{k\bar{l}}\Psi_{ip}^k\overline{\Psi_{jq}^l}
-g_{\varphi}^{i\bar{\jmath}}\left(\frac{1}{2}\mathcal{L}_{X}g_{\varphi}-g_{\varphi}\right)^{p\bar{q}}(g_{\varphi})_{k\bar{l}}\Psi_{ip}^k\overline{\Psi_{jq}^l}\\
&\quad+g_{\varphi}^{i\bar{\jmath}}g_{\varphi}^{p\bar{q}}\left(\frac{1}{2}\mathcal{L}_{X}g_{\varphi}-g_{\varphi}\right)_{k\bar{l}}\Psi_{ip}^k\overline{\Psi_{jq}^l}+g_{\varphi}^{i\bar{\jmath}}g_{\varphi}^{p\bar{q}}(g_{\varphi})_{k\bar{l}}\partial_{\tau}\Psi(\tau)_{ip}^k|_{\tau\,=\,-1}\overline{\Psi(\tau)_{jq}^l}\\
&\quad+g_{\varphi}^{i\bar{\jmath}}g_{\varphi}^{p\bar{q}}(g_{\varphi})_{k\bar{l}}\Psi(\tau)_{ip}^k\overline{\partial_{\tau}\Psi(\tau)_{jq}^l}|_{\tau\,=\,-1}.
\end{split}
\end{equation}
By combining and \eqref{hot}, \eqref{time-der-S-I}, \eqref{time-der-S-II}, we obtain the first desired estimate, namely that
\begin{equation}\label{dream-come-true}
\begin{split}
\Delta_{\omega_{\varphi}}S-\frac{X}{2}\cdot S&\geq |\nabla^{g_{\varphi}} \Psi|^2_{g_{\varphi}}+|\overline{\nabla}^{g_{\varphi}}\Psi|_{g_{\varphi}}^2+S-C(n,\tr_{g}g_{\varphi},\tr_{g_{\varphi}}g))|\Rm(g)|_gS\\
&\qquad-C(n,\tr_{g}g_{\varphi},\tr_{g_{\varphi}}g))|\nabla^{g}\Rm(g)|_{g}\sqrt{S}
\end{split}
\end{equation}
for some positive constant $C>0$. Notice that the last three terms of the right-hand side of \eqref{brute-force-S} cancel with those of the first three terms of the right-hand side of \eqref{time-der-S-II} thanks to \eqref{hot}.

The second estimate is a straightforward consequence of the Kato inequality and \eqref{dream-come-true}.
\end{proof}

We also need the next lemma concerning the integrability of $S$.
\begin{lemma}\label{lemma-int-S}
The function $S$ lies in $L^1(e^{-f_{\varphi}}\omega_{\varphi}^n)$.
\end{lemma}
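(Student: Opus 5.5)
Since $g_{\varphi}$ and $g$ are uniformly equivalent by Proposition \ref{equiv-met-prop-C^2}, we have $S\leq C|\nabla^{g}g_{\varphi}|^2_{g}=C|\nabla^{g}(i\partial\bar{\partial}\varphi)|^2_{g}$. So the plan is to bound the weighted integral of the third derivatives of $\varphi$ by quantities already known to be integrable: the moments of $f$ (Proposition \ref{prop-bds-moments}) and $\tr_{\omega_{\varphi}}\omega$. I will take the cut-off $\psi_R=\psi(f_{\varphi}/R)$ of Lemma \ref{lemma-moments} and estimate $\int_M S\,\psi_R^2\,e^{-f_{\varphi}}\omega_{\varphi}^n$ uniformly in $R$. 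Letting $R\to\infty$ then gives the conclusion by monotone convergence.

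The key tool is a Calabi-type identity for a quantity whose drift Laplacian contains $S$. The natural candidate is $\tr_{\omega}\omega_{\varphi}$ (equivalently $\Delta_{\omega}\varphi$). By the standard Aubin--Yau/Calabi computation,
\[
\Delta_{\omega_{\varphi}}\tr_{\omega}\omega_{\varphi}\geq c\,|\nabla^{g}g_{\varphi}|^2-C(\tr_{\omega}\omega_{\varphi})\,|\Rm(g)|_{g}+\Delta_{\omega}\log\frac{\omega_{\varphi}^n}{\omega^n},
\]
where the constant $c$ depends only on the uniform equivalence constant. By \eqref{cmaa}, $\log(\omega_{\varphi}^n/\omega^n)=F+\frac{X}{2}\cdot\varphi-\varphi$, so the last term is $\Delta_{\omega}F+\Delta_{\omega}(\frac{X}{2}\cdot\varphi-\varphi)$. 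The term $\Delta_{\omega}F$ is bounded by Proposition \ref{mainprop}(i). For the other term I would commute $X$ with $\Delta_{\omega}$. Since $\mathcal{L}_Xi\partial\bar\partial\varphi=i\partial\bar\partial(X\cdot\varphi)$ and $\mathcal{L}_X\omega=2\omega+O(f^{-1})$ by Proposition \ref{mainprop}(i),(ii), this gives
\[
\Delta_{\omega}\Big(\frac{X}{2}\cdot\varphi-\varphi\Big)=\frac{X}{2}\cdot\Delta_{\omega}\varphi+O\big(f^{-1}|\partial\bar\partial\varphi|_{g}\big),
\]
and the error is bounded by the $C^2$ equivalence. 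Hence the drift term $-\frac{X}{2}\cdot\tr_{\omega}\omega_{\varphi}$ is absorbed on the left-hand side, and we get
\[
\Delta_{\omega_{\varphi},X}\tr_{\omega}\omega_{\varphi}\geq c\,S-C
\]
up to harmless multiplicative constants relating the two drift Laplacians. Here I use that $\tr_{\omega}\omega_{\varphi}\leq C$ and $|\Rm(g)|_{g}\leq C$.

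Next, multiply by $\psi_R^2e^{-f_{\varphi}}$ and integrate by parts using the self-adjointness of $\Delta_{\omega_{\varphi},X}$ with respect to $e^{-f_{\varphi}}\omega_{\varphi}^n$. This gives
\[
c\int S\psi_R^2e^{-f_{\varphi}}\omega_{\varphi}^n\leq C\int e^{-f_{\varphi}}\omega_{\varphi}^n-\int g_{\varphi}\big(\nabla\tr_{\omega}\omega_{\varphi},\nabla\psi_R^2\big)e^{-f_{\varphi}}\omega_{\varphi}^n.
\]
The boundary term is controlled by $|\nabla\tr_{\omega}\omega_{\varphi}|\leq C\sqrt{S}$ and $|\nabla\psi_R|\leq CR^{-1}|X|_{g_{\varphi}}\leq CR^{-1/2}$ (since $|X|^2\leq 2f_{\varphi}\leq 4R$ on the support). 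Cauchy--Schwarz then absorbs half of it into the left-hand side, leaving a term of order $R^{-1}$, which is bounded. The total mass $\int e^{-f_{\varphi}}\omega_{\varphi}^n$ is finite by Proposition \ref{prop-bds-moments} with $k=0$.

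The main obstacle is the commutator step: making sure the drift term produced by the Monge--Amp\`ere right-hand side, $\Delta_{\omega}(\frac{X}{2}\cdot\varphi)$, really combines with $-\frac{X}{2}\cdot\tr_{\omega}\omega_{\varphi}$ up to errors that are bounded or of the form $\epsilon S$. If some residual term like $f^{-1/2}\sqrt{S}$ appears, I would handle it with $\epsilon S+C_{\epsilon}f^{-1}$, and the resulting extra term is integrable by Proposition \ref{prop-bds-moments}.
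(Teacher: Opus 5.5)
Your proposal is correct and is essentially the paper's argument: a Yau-type $C^2$ computation gives $\Delta_{\omega_{\varphi},\,X}\tr_{\omega}\omega_{\varphi}\geq C^{-1}S-C$, using Proposition \ref{equiv-met-prop-C^2} and the bounded geometry of $g$, and one then integrates against the cut-off $\psi_R$ with respect to $e^{-f_{\varphi}}\omega_{\varphi}^n$ and lets $R\to\infty$. The differences are cosmetic: the paper handles the Ricci term directly through the soliton equation $\rho_{\omega_{\varphi}}+\frac{1}{2}\mathcal{L}_{X}\omega_{\varphi}=\omega_{\varphi}$ together with $\frac{1}{2}\mathcal{L}_{X}\omega-\omega=O(f^{-1})$, rather than through \eqref{cmaa} and your commutator identity, and it moves both derivatives onto the cut-off (using $|\Delta_{\omega_{\varphi},\,X}\psi_R|\leq C$ and $\tr_{\omega}\omega_{\varphi}\leq C$) instead of your one-derivative Cauchy--Schwarz absorption.
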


\begin{proof}
Since $X\cdot \tr_{\omega}\omega_{\varphi}=\tr_{\omega}\mathcal{L}_X\omega_{\varphi}-(\omega_{\varphi},\mathcal{L}_X\omega)_{\omega}$, adapting Yau's $C^2$-estimate as in \cite[Proposition 3.2.4]{Bou-Eys-Gue} leads to:
%\textcolor{red}{Something was up with the term $\tr_{\omega}\left(\rho(\omega_{\varphi})+\mathcal{L}_{\frac{X}{2}}\omega_{\varphi}\right)$ here. I can't remember what is was though.}
\begin{equation*}
\Delta_{\omega_{\varphi},\,X}\tr_{\omega}\omega_{\varphi}=g^{-1}\ast g_{\varphi}^{-1}\ast g_{\varphi}^{-1}\ast \nabla^gg_{\varphi}\ast \nabla^gg_{\varphi}+(\omega_{\varphi},\mathcal{L}_{\frac{X}{2}}\omega)_{\omega}-\tr_{\omega}\left(\rho(\omega_{\varphi})+\mathcal{L}_{\frac{X}{2}}\omega_{\varphi}\right)+\Rm(g)\ast g_{\varphi}^{-1}\ast g_{\varphi},
\end{equation*}
where $g^{-1}\ast g_{\varphi}^{-1}\ast g_{\varphi}^{-1}\ast \nabla^gg_{\varphi}\ast \nabla^gg_{\varphi}\geq C(n,\tr_{g}g_{\varphi},\tr_{g_{\varphi}}g)^{-1}S$ and $ \Rm(g)\ast g_{\varphi}^{-1}\ast g_{\varphi}\geq -C(n,\tr_{g}g_{\varphi},\tr_{g_{\varphi}}g) (f+C)^{-1}$ because of the quadratic curvature decay of the background metric $g$.
Invoking the soliton equation and the aforementioned lower bounds, one gets on $M$:
\begin{equation}\label{omymy-tr-yau}
\Delta_{\omega_{\varphi},\,X}\tr_{\omega}\omega_{\varphi}\geq C^{-1}S-C(f+C)^{-1},
\end{equation}
where $C>0$ is a uniform positive constant. Here we have used the equivalence of the metrics given by 
Proposition \ref{equiv-met-prop-C^2}, together with the fact that $|\mathcal{L}_{\frac{X}{2}}\omega-\omega|_g=O(f^{-1})$ by construction as dictated in Proposition \ref{mainprop}(i).

Consider next the cut-off function $\psi_R$ as defined in Lemma \ref{lemma-moments} with $R\geq 1$. Observe that thanks to \eqref{omymy-tr-yau},
\begin{equation*}\label{helpp}
\begin{split}
\int_M\psi_R\left(\Delta_{\omega_{\varphi},\,X}\tr_{\omega}\omega_{\varphi}\right)\,e^{-f_{\varphi}}\omega_{\varphi}^n&\geq C^{-1}\int_M\psi_R S\, e^{-f_{\varphi}}\omega_{\varphi}^n-C\int_M\psi_R(f+C)^{-1}\, e^{-f_{\varphi}}\omega_{\varphi}^n\\
&\geq C^{-1}\int_M\psi_R S\, e^{-f_{\varphi}}\omega_{\varphi}^n-C',
\end{split}
\end{equation*}
where $C'>0$ denotes a positive constant independent of $R\geq 1$. An integration by parts then shows that the left-hand side of \eqref{helpp} is bounded from above uniformly with respect to $R$, since $|\Delta_{\omega_{\varphi},\,X}\psi_R|$ and $\tr_{\omega}\omega_{\varphi}$ are both bounded independently of $R$. Indeed, the former because
\begin{equation*}
\begin{split}
|\Delta_{\omega_{\varphi},\,X}\psi_R|&\leq |\psi'(f_{\varphi}\cdot R^{-1})||f_{\varphi}|\cdot R^{-1}+|\psi''(f_{\varphi}\cdot R^{-1})||X|^2_{g_{\varphi}}R^{-2}\\
&\leq C,
\end{split}
\end{equation*}
where we have used the normalisation \eqref{normal-f} of $f_{\varphi}$
in the first inequality together with \eqref{equ:3}
(more specifically $|X|^2_{g_{\varphi}}\leq 2f_{\varphi}+2n$) in the second inequality.
The bound on $\tr_{\omega}\omega_{\varphi}$ is by Proposition \ref{equiv-met-prop-C^2}. This completes the proof of the lemma.   
\end{proof}

We now present:
\begin{proof}[Proof of Proposition \ref{prop-bds-C3}]
We first claim that $S$ is bounded on $M$. Indeed, define $U:=\sqrt{S}-C$ where $C>0$ is such that $\Delta_{\omega_{\varphi},\,X}U\geq  U/4$ outside a compact set, a fact ensured by Lemma \ref{lemma-S} and the boundedness of the curvature (and its covariant derivatives) of the metric $g$ and let $G(f_{\varphi}):=f_{\varphi}$ and $V:=\frac{1}{4}$. Lemma \ref{lemma-int-S} lets us apply Lemma \ref{l2-ppe-max} to get the desired intermediate result, i.e. that $S$ is bounded on $M$.

Let us now show that $S$ decays quadratically at infinity. Invoking Lemma \ref{lemma-S} again and Corollary \ref{coro-pot-bd}, one has outside a compact set of $M$,
\begin{equation*}
\begin{split}
\frac{1}{2}\Delta_{g_{\varphi},\,X}\left(f_{\varphi}S\right)&\geq (f_{\varphi} S)\left(1-\frac{C}{f}\right)-C\frac{f_{\varphi}}{f^2}+g_{\varphi}(\nabla^{g_{\varphi}}\log f_{\varphi},\nabla^{g_{\varphi}}\left(f_{\varphi}S\right))\\
&\quad+\left(\frac{1}{2}\frac{\Delta_{g_{\varphi},\,X}f_{\varphi}}{f_{\varphi}}-|\nabla^{g_{\varphi}}\log f_{\varphi}|^2_{g_{\varphi}}\right)\left(f_{\varphi}S\right)\\
&\geq -C\frac{(f_{\varphi} S)+1}{f_{\varphi}}+g_{\varphi}(\nabla^{g_{\varphi}}\log f_{\varphi},\nabla^{g_{\varphi}}\left(f_{\varphi}S\right)).
\end{split}
\end{equation*}
For $\alpha>0$, consider $U:=\left((f_{\varphi} S)+1\right)e^{\alpha f_{\varphi}^{-1}}$ and compute as follows outside a compact set of $M$ so that $f_{\varphi}>0$:
\begin{equation*}
\begin{split}
\frac{1}{2}\Delta_{g_{\varphi},\,(1+2f_{\varphi}^{-1})X}U&=\frac{e^{\alpha f_{\varphi}^{-1}}}{2}\Delta_{g_{\varphi},\,(1+2f_{\varphi}^{-1})X}\left(f_{\varphi}S\right)+\alpha g_{\varphi}(\nabla^{g_{\varphi}} f_{\varphi}^{-1},\nabla^{g_{\varphi}}U)\\
&\quad+\left((f_{\varphi} S)+1\right)\frac{1}{2}\Delta_{g_{\varphi},\,(1+2f_{\varphi}^{-1})X}e^{\alpha f_{\varphi}^{-1}}-\alpha^2|\nabla^{g_{\varphi}}f_{\varphi}^{-1}|^2_{g_{\varphi}}U\\
&\geq \left(\alpha\frac{1}{2}\Delta_{g_{\varphi},\,(1+2f_{\varphi}^{-1})X}f_{\varphi}^{-1}-Cf_{\varphi}^{-1}-\frac{\alpha^2}{2}|\nabla^{g_{\varphi}}f_{\varphi}^{-1}|^2_{g_{\varphi}}\right)U+\alpha g_{\varphi}(\nabla^{g_{\varphi}} f_{\varphi}^{-1},\nabla^{g_{\varphi}}U)\\
&\geq \left(\alpha-C-\frac{\alpha^2}{2}f_{\varphi}^{-3}|\nabla^{g_{\varphi}}f_{\varphi}|^2_{g_{\varphi}}\right)f_{\varphi}^{-1}\cdot U+\alpha g_{\varphi}(\nabla^{g_{\varphi}} f_{\varphi}^{-1},\nabla^{g_{\varphi}}U).
\end{split}
\end{equation*}
Now, choose $\alpha=2+C$ so that the previous estimate gives outside a sufficiently large compact set:
\begin{equation*}
\begin{split}
\frac{1}{2}\Delta_{g_{\varphi},\,(1+2f_{\varphi}^{-1})X}U&\geq f_{\varphi}^{-1}\cdot U+\alpha g_{\varphi}(\nabla^{g_{\varphi}} f_{\varphi}^{-1},\nabla^{g_{\varphi}}U).
\end{split}
\end{equation*}
Here we have used Lemma \ref{id-sol-egs} to handle the term $\frac{\alpha^2}{2}f_{\varphi}^{-3}|\nabla^{g_{\varphi}}f_{\varphi}|^2_{g_{\varphi}}$.

Now, $U=O(f_{\varphi})$ since we have shown in the first part of this proof that $S$ is bounded on $M$. Therefore, $U\in L^2(e^{-f_{\varphi}}\omega_{\varphi}^n)$. Applying Lemma \ref{l2-ppe-max} to $U$ defined above, $G(f_{\varphi}):=f_{\varphi}+2\log f_{\varphi}+2\alpha f_{\varphi}^{-1}$ and $V:=2f_{\varphi}^{-1}$, gives that $U$ is bounded on $M$, i.e. $S$ decays quadratically at infinity as promised.
\end{proof}
\begin{prop}[{Quadratic curvature decay of curvature}]\label{prop-quad-curv-dec}
Let $\varphi$ satisfy \eqref{cmaa}. There exists $C>0$ such that  outside a compact set of $M$, $|\Rm(g_{\varphi})|_{g_{\varphi}}\leq Cf_{\varphi}^{-1}$.
\end{prop}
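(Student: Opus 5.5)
We aim to show quadratic curvature decay of $g_{\varphi}$ by transferring what we know about $g$ via the tensor $\Psi=\Gamma(g_\varphi)-\Gamma(g)$. Write
$$\Rm(g_{\varphi})=\Rm(g)+\overline{\nabla}^{g}\Psi+\Psi\ast\Psi\ast\ldots,$$
or more precisely, in holomorphic coordinates, $R(g_\varphi)_{i\bar{\jmath}k}{}^{l}-R(g)_{i\bar{\jmath}k}{}^{l}=-\partial_{\bar{\jmath}}\Psi_{ik}^{l}$, so that $|\Rm(g_{\varphi})|_{g_\varphi}\leq C(|\Rm(g)|_g+|\overline{\nabla}^{g_{\varphi}}\Psi|_{g_{\varphi}})$ using the uniform equivalence $C^{-1}\omega\leq\omega_\varphi\leq C\omega$ from Proposition \ref{equiv-met-prop-C^2}. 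Since $|\Rm(g)|_g=O(f^{-1})$ by Proposition \ref{mainprop}(i) and $f\sim f_\varphi$ by Corollary \ref{coro-pot-bd}, it suffices to prove $|\overline{\nabla}^{g_\varphi}\Psi|_{g_\varphi}\leq Cf_\varphi^{-1}$ outside a compact set. Proposition \ref{prop-bds-C3} (its proof in fact gives $S\leq Cf_\varphi^{-1}$) supplies the input $|\Psi|^2_{g_\varphi}=S=O(f_\varphi^{-1})$.

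The plan is then a second-order version of the argument in Proposition \ref{prop-bds-C3}. First, obtain local bounds: on balls of fixed $g$-radius the metric $g$ has bounded geometry (AC at rate $-2$), $\omega_\varphi$ is uniformly equivalent to $\omega$, and $\nabla^g\omega_\varphi$ is bounded; the equation \eqref{cmaa} is a Monge--Amp\`ere equation with drift term $\frac{X}{2}\cdot\varphi$, which is itself controlled in $C^{1,\alpha}$ by these bounds after subtracting the linear-in-$X$ part (or, more cleanly, use that $g_\varphi$ is a shrinking soliton and the associated Ricci flow $g_\varphi(\tau)$ is uniformly equivalent to $g(\tau)$ with bounded $\Psi$, so Shi-type local derivative estimates apply). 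This yields $|\Rm(g_\varphi)|_{g_\varphi}$ and $|\nabla^{g_\varphi}\Psi|$ bounded, giving in particular $\Rm(g_\varphi)$ bounded globally. Second, compute the drift-Laplacian of $Q:=|\overline{\nabla}^{g_\varphi}\Psi|^2_{g_\varphi}$ (or directly $|\Rm(g_\varphi)|^2$, using the soliton identity $\Delta_{g_\varphi,X}\Rm=-\Rm+\Rm\ast\Rm$ in its K\"ahler normalisation), obtaining $\Delta_{\omega_\varphi,X}Q\geq Q-C|\Rm|Q-C(\ldots)$ where the error terms involve $|\nabla^g\Rm(g)|$, $S$ and $|\nabla\Psi|$, all now bounded and decaying. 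Combining with the term $|\nabla\Psi|^2$ appearing in Lemma \ref{lemma-S} gives a barrier $Q+KS$ satisfying $\Delta_{\omega_\varphi,X}(Q+KS)\geq \frac12(Q+KS)-Cf_\varphi^{-1}$.

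Third, weight by $f_\varphi$ exactly as in the second half of the proof of Proposition \ref{prop-bds-C3}: set $U:=(f_\varphi(Q+KS)+1)e^{\alpha f_\varphi^{-1}}$, show $\frac12\Delta_{g_\varphi,(1+2f_\varphi^{-1})X}U\geq f_\varphi^{-1}U+\alpha g_\varphi(\nabla f_\varphi^{-1},\nabla U)$ for $\alpha$ large, note $U=O(f_\varphi)\in L^2(e^{-f_\varphi}\omega_\varphi^n)$ by the global boundedness from the first step and Theorem \ref{prop-sum-sgrs}(iii), and apply Lemma \ref{l2-ppe-max} with $G=f_\varphi+2\log f_\varphi+2\alpha f_\varphi^{-1}$. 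This gives $Q=O(f_\varphi^{-1})$, hence $|\Rm(g_\varphi)|_{g_\varphi}\leq Cf_\varphi^{-1}$. Actually since $|\Rm|$ rather than $|\Rm|^2$ should decay like $f^{-1}$, the weight should be $f_\varphi^2$ on $Q$; the same computation works with $\log f_\varphi$ replaced by $2\log f_\varphi$, the gain from the zeroth-order term $Q$ in the drift-Laplacian (coefficient $1$ versus the loss $2\cdot\frac{X\cdot f_\varphi}{2f_\varphi}\approx 2$ after the $\frac12$ normalisation) being the delicate point.

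The main obstacle is the first step, the global boundedness of $\Rm(g_\varphi)$ and $\nabla\Psi$: a priori the curvature of $g_\varphi$ is not known to be bounded, so the reaction term $|\Rm|Q$ could spoil the maximum principle. I would handle this by local parabolic (Shi/Evans--Krylov) estimates for the self-similar K\"ahler--Ricci flow, using that $g_\varphi(\tau)$ is uniformly equivalent to the smooth flow $g(\tau)$ with $C^1$ control from Proposition \ref{prop-bds-C3}, on parabolic cylinders of unit size which exist uniformly since $g$ has bounded geometry.
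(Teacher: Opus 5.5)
Your reduction to $\overline{\nabla}^{g_\varphi}\Psi$ is fine. Step 1 is a workable, if heavy, alternative route to a global bound on $\Rm(g_\varphi)$: on $[-2,-1]$ the self-similar flow is uniformly equivalent to $g$ on unit $g$-balls. So local K\"ahler--Ricci flow estimates that need only uniform metric equivalence apply (Sherman--Weinkove type, or parabolic Evans--Krylov). Shi's estimates by themselves do not, since they presuppose a curvature bound.

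The genuine gap is in your last step: $Q$ and $S$ have different homogeneities, so your weighted barrier cannot give quadratic decay. In the normalisation of Lemma \ref{lemma-S}, $\Delta_{\omega_\varphi,X}f_\varphi^k\approx -k f_\varphi^k$. Hence a weight $f_\varphi^k$ costs exactly $k$ in the zeroth-order coefficient. $Q\approx|\Rm(g_\varphi)|^2$ comes with coefficient $2$, but $S$ only with coefficient $1$, so $Q+KS$ has effective rate at most $1$. (With the $\frac12$ you wrote, not even the $f_\varphi$-weight closes.)

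The consequences are as follows.
\begin{itemize}
\item With the $f_\varphi$-weighted barrier you get at best $Q+KS=O(f_\varphi^{-1})$, i.e.\ only $|\Rm(g_\varphi)|_{g_\varphi}=O(f_\varphi^{-1/2})$. Your sentence ``$Q=O(f_\varphi^{-1})$, hence $|\Rm|\leq Cf_\varphi^{-1}$'' is a non sequitur, because $Q$ is quadratic in curvature.
\item With the weight $f_\varphi^2$, the $S$-part produces a term $\approx -Kf_\varphi^2 S$ of the wrong sign. Nothing controls it, since at that stage only $S=O(f_\varphi^{-1})$ is known. This is exactly the ``coefficient $1$ versus loss $2$'' you flagged, and it does not work.
\item You cannot drop $KS$ there either. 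It is needed to absorb the reaction term $C|\Rm|Q$, which is comparable to $Q$ when $|\Rm|$ is merely bounded.
\end{itemize}

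The paper avoids both this mismatch and your Step 1 by working with $|\Rm(g_\varphi)|_{g_\varphi}$ rather than its square. By Kato, $\frac12\Delta_{g_\varphi,X}|\Rm(g_\varphi)|\geq|\Rm(g_\varphi)|-C(n)|\Rm(g_\varphi)|^2$, which has the same rate and homogeneity as $S$. The term $|\overline{\nabla}^{g_\varphi}\Psi|^2=|\Rm(g_\varphi)-\Rm(g)|^2\geq\frac12|\Rm(g_\varphi)|^2-Cf_\varphi^{-2}$ from Lemma \ref{lemma-S} absorbs $C(n)|\Rm(g_\varphi)|^2$ without any a priori curvature bound. This gives $\frac12\Delta_{g_\varphi,X}(AS+|\Rm(g_\varphi)|)\geq(1-Cf_\varphi^{-1})(AS+|\Rm(g_\varphi)|)-ACf_\varphi^{-2}$ for $A\geq 2C(n)$. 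Boundedness then follows from the $L^2$ maximum principle, with integrability obtained as in Lemma \ref{lemma-int-S}. The single weight $e^{\alpha f_\varphi^{-1}}f_\varphi$ then yields $|\Rm(g_\varphi)|=O(f_\varphi^{-1})$.

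Within your own scheme you could instead bootstrap. Once $|\Rm(g_\varphi)|=O(f_\varphi^{-1/2})$, drop $S$ and use $\Delta_{\omega_\varphi,X}|\Rm|\geq(1-Cf_\varphi^{-1/2})|\Rm|$. Then apply the $L^2$ maximum principle to $e^{\alpha f_\varphi^{-1/2}}f_\varphi|\Rm(g_\varphi)|$; the exponential factor contributes a gain $\approx\frac{\alpha}{2}f_\varphi^{-1/2}$ that beats the $Cf_\varphi^{-1/2}$ loss for $\alpha$ large.
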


\begin{proof}
The proof of this proposition follows the same lines as those of the proof of Proposition \ref{prop-bds-C3}. We will therefore be sketchy. As $(M,\omega_{\varphi},X)$ is a gradient shrinking gradient K\"ahler-Ricci soliton, the norm of the curvature tensor $\Rm(g_{\varphi})$ satisfies the following schematic  differential inequality whose proof can be found for instance in \cite{??}:
\begin{equation*}
\frac{1}{2}\Delta_{g_{\varphi},\,X}|\Rm(g_{\varphi})|^2_{g_{\varphi}}\geq |\nabla^{g_{\varphi}}\Rm(g_{\varphi})|^2_{g_{\varphi}}+2|\Rm(g_{\varphi})|^2_{g_{\varphi}}-C(n)|\Rm(g_{\varphi})|^3_{g_{\varphi}}
\end{equation*}
In particular, by Kato's inequality, the function $|\Rm(g_{\varphi})|_{g_{\varphi}}$ satisfies weakly:
\begin{equation*}
\frac{1}{2}\Delta_{g_{\varphi},\,X}|\Rm(g_{\varphi})|_{g_{\varphi}}\geq |\Rm(g_{\varphi})|_{g_{\varphi}}-C(n)|\Rm(g_{\varphi})|^2_{g_{\varphi}}.
\end{equation*}
Now, Lemma \ref{lemma-S} together with the curvature decay of the metric $g$ and Proposition \ref{equiv-met-prop-C^2} ensure that the function $S$ satisfies outside a compact set of $M$, 
\begin{equation*}
\begin{split}
\frac{1}{2}\Delta_{g_{\varphi},\,X}S
&\geq|\overline{\nabla}^{g_{\varphi}}\Psi|_{g_{\varphi}}^2+(1-Cf_{\varphi}^{-1})S-Cf_{\varphi}^{-2},
\end{split}
\end{equation*}
where we have used Corollary \ref{coro-pot-bd} to express the curvature decay of $g$ in terms of $f_{\varphi}$.

Now, $|\overline{\nabla}^{g_{\varphi}}\Psi|_{g_{\varphi}}^2=|\Rm(g_{\varphi})-\Rm(g)|^2_{g_{\varphi}}\geq \frac{1}{2}|\Rm(g_{\varphi})|^2_{g_{\varphi}}-Cf_{\varphi}^{-2}$ outside a compact set by the triangular inequality for some uniform positive constant $C$. Therefore, 
\begin{equation}
\begin{split}\label{S-rm-ineq}
\frac{1}{2}\Delta_{g_{\varphi},\,X}S
&\geq \frac{1}{2}|\Rm(g_{\varphi})|^2_{g_{\varphi}}+(1-Cf_{\varphi}^{-1})S-Cf_{\varphi}^{-2}.
\end{split}
\end{equation}
From \eqref{S-rm-ineq}, we can already show that $|\Rm(g_{\varphi})|_{g_{\varphi}}\in L^2(e^{-f_{\varphi}}\omega_{\varphi}^n)$ in the same spirit of the proof of Lemma \ref{lemma-int-S}. Then by considering a linear combination of $S$ and $|\Rm(g_{\varphi})|_{g_{\varphi}}$, one is led to the following differential inequality for $A>0$:
\begin{equation}
\begin{split}\label{S-Rm-diff-A}
\frac{1}{2}\Delta_{g_{\varphi},\,X}\left(AS+|\Rm(g_{\varphi})|_{g_{\varphi}}\right)
&\geq \left(\frac{A}{2}-C(n)\right)|\Rm(g_{\varphi})|^2_{g_{\varphi}}+|\Rm(g_{\varphi})|_{g_{\varphi}}+A(1-Cf_{\varphi}^{-1})S-ACf_{\varphi}^{-2}\\
&\geq (1-Cf_{\varphi}^{-1}) \left(AS+|\Rm(g_{\varphi})|_{g_{\varphi}}\right)-ACf_{\varphi}^{-2},
\end{split}
\end{equation}
provided $A\geq 2C(n)$. From \eqref{S-Rm-diff-A}, one can deduce that $|\Rm(g_{\varphi})|_{g_{\varphi}}$ is bounded on $M$ in the same way we proved that $S$ was bounded on $M$ in the proof of Proposition \ref{prop-bds-C3}. Then one can show that $g_{\varphi}$ has quadratic curvature decay by deriving a differential inequality for an auxiliary function of the form $e^{\alpha f_{\varphi}^{-1}}f_{\varphi}\left(AS+|\Rm(g_{\varphi})|_{g_{\varphi}}\right)$ with $\alpha>0$ suitably chosen.
\end{proof}

\begin{corollary}[{Quadratic curvature decay of curvature with derivatives}]\label{coro-shi-est}
For $k\geq 0$, there exists $C_k>0$ such that $|\nabla^{g_{\varphi},\,k}\Rm(g_{\varphi})|_{g_{\varphi}}\leq C_kf_{\varphi}^{-1-\frac{k}{2}}$ outside a $k$-independent compact set of $M$.
\end{corollary}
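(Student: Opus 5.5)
Since Proposition \ref{prop-quad-curv-dec} already gives the case $k=0$, I would obtain the higher derivative bounds by scaling and local Shi-type estimates on the associated self-similar Ricci flow. The point is that quadratic curvature decay at a point $x$ with $f_{\varphi}(x)$ large yields a parabolic neighbourhood of size comparable to $f_{\varphi}(x)^{1/2}$ on which the rescaled curvature is bounded.

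First I fix the geometry. By Theorem \ref{prop-sum-sgrs}(ii), $f_{\varphi}(x)\sim \frac{1}{4}d_{g_{\varphi}}(p,x)^2$, so $r(x):=d_{g_{\varphi}}(p,x)\sim 2f_{\varphi}(x)^{1/2}$. Set $\rho:=\delta r(x)$ for a small $\delta>0$. On the ball $B_{g_{\varphi}}(x,\rho)$ we have $f_{\varphi}\geq c f_{\varphi}(x)$, so Proposition \ref{prop-quad-curv-dec} gives $|\Rm(g_{\varphi})|\leq C\rho^{-2}$ there.

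Next I pass to the flow. Consider $g_{\varphi}(\tau)=(-\tau)(\varphi^X_\tau)^*g_{\varphi}$ from the proof of Lemma \ref{lemma-S}, which solves the Ricci flow (in the sign convention there), restricted to a time interval of length comparable to $\delta^2$ around $\tau=-1$. The flow lines of $\frac{X}{2(-\tau)}$ move $f_{\varphi}$ at rate $\lesssim f_{\varphi}/(-\tau)$, because $|X|^2\leq 2f_{\varphi}+2n$ by \eqref{equ:3}. Hence over such a short time interval the pulled-back points stay in a region where $f_{\varphi}\sim f_{\varphi}(x)$, and after the scaling factor $(-\tau)$ the curvature bound $|\Rm(g_{\varphi}(\tau))|\leq C\rho^{-2}$ persists on $B_{g_{\varphi}(-1)}(x,\rho/2)\times[-1-\delta^2,-1]$. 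Parabolically rescaling this neighbourhood by $\rho^{-2}$ produces a flow on a unit-size parabolic cylinder with curvature bounded by $C$. Shi's local derivative estimates then give $|\nabla^k\Rm|\leq C_k$ at the final time for the rescaled flow. Undoing the scaling yields $|\nabla^{g_{\varphi},k}\Rm(g_{\varphi})|(x)\leq C_k\rho^{-2-k}\leq C_k f_{\varphi}(x)^{-1-k/2}$.

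The compact set is the one from Proposition \ref{prop-quad-curv-dec}, enlarged once by a factor that depends only on $\delta$, so it is independent of $k$.

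The main obstacle is to verify carefully that the time-dependent diffeomorphisms keep the parabolic neighbourhood inside the region where the decay holds, and that its size really is comparable to $f_{\varphi}^{1/2}$. If this proves delicate, I would instead apply elliptic estimates directly to the static soliton equation. In harmonic coordinates on the rescaled ball (with injectivity radius controlled via the curvature bound and non-collapsing, which follows from the equivalence with the AC metric $g$ in Proposition \ref{equiv-met-prop-C^2}), the equation $\Ric(g_{\varphi})=g_{\varphi}-\nabla^2 f_{\varphi}$ together with $\Delta\Rm=\nabla^2\Ric+\Rm\ast\Rm$ and the fact that $\Delta_{X}$ has the scale-invariant drift $\rho\cdot|X|\sim\rho^2$ after rescaling lets me bootstrap. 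Either way, the essential input is Proposition \ref{prop-quad-curv-dec}.
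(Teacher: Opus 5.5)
\textbf{There is a genuine gap: the parabolic scales do not match.} Your overall route, Shi's local estimates for the self-similar flow $g_\varphi(\tau)=(-\tau)(\varphi^X_\tau)^*g_\varphi$ at spatial scale $\rho\sim f_\varphi(x)^{1/2}$ fed by Proposition \ref{prop-quad-curv-dec}, is what the paper's one-line proof intends. As written, though, it only yields $|\nabla^{g_\varphi,k}\Rm(g_\varphi)|_{g_\varphi}\le C_kf_\varphi^{-1}$.

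You work on $B(x,\rho/2)\times[-1-\delta^2,-1]$. Its time length is $O(1)$, while $\rho=\delta r(x)\to\infty$. Multiplying the metric by $\rho^{-2}$ turns this into a time interval of length $\delta^2\rho^{-2}=r(x)^{-2}\to 0$, so the rescaled cylinder is not of unit size. In Shi's estimate $|\nabla^k\Rm|\lesssim K(\rho^{-2}+t^{-1}+K)^{k/2}$, with $K\sim f_\varphi(x)^{-1}$ and $t\sim\delta^2$, the $t^{-1}$ term dominates and gives only $\delta^{-k}f_\varphi(x)^{-1}$. To gain the factor $f_\varphi^{-k/2}$ you need a time interval of length $\sim\rho^2\sim\delta^2 f_\varphi(x)$. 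Over such an interval your justification fails: along backward flow lines $f_\varphi(\varphi^X_\tau(y))\approx f_\varphi(y)/(-\tau)$. So $x$ is carried back to the region $\{f_\varphi\approx\delta^{-2}\}$, not kept in $\{f_\varphi\sim f_\varphi(x)\}$.

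\textbf{The missing observation} is that this loss is exactly compensated by the scaling factor:
$|\Rm(g_\varphi(\tau))|_{g_\varphi(\tau)}(y)=(-\tau)^{-1}|\Rm(g_\varphi)|_{g_\varphi}(\varphi^X_\tau(y))$.
Write $s=-\tau$ and use $|X|^2_{g_\varphi}\le 2f_\varphi+2n$, which follows from \eqref{equ:3} and Theorem \ref{prop-sum-sgrs}(i). Then the map $s\mapsto s\,(f_\varphi(\varphi^X_{-s}(y))+n+1)$ is nondecreasing on $[1,\infty)$, hence bounded below by $f_\varphi(y)+n+1$. Combine this with the global bound $|\Rm(g_\varphi)|_{g_\varphi}\le C(f_\varphi+n+1)^{-1}$, which is Proposition \ref{prop-quad-curv-dec} plus compactness. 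This gives $|\Rm(g_\varphi(\tau))|(y)\le C(f_\varphi(y)+n+1)^{-1}$ for \emph{all} $\tau\le-1$.

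Now the curvature on $B(x,\rho)\times[-1-\rho^2,-1]$ is $\lesssim\delta^2\rho^{-2}$, and the rescaled cylinder really has unit size. Shi's estimate then gives $C_k\rho^{-2-k}\sim C_kf_\varphi(x)^{-1-k/2}$. One routine check remains: the relevant balls must stay in $\{f_\varphi\sim f_\varphi(x)\}$, which follows from the uniform equivalence of the $g_\varphi(\tau)$ on that region.

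\textbf{Your elliptic fallback does not get around this.} As you compute yourself, after rescaling by $\rho^{-2}$ the drift has size $\sim\rho^2\to\infty$. So there are no uniform interior estimates on unit balls. Passing to the flow is exactly what absorbs the drift into the diffeomorphisms. Also, injectivity radius or non-collapsing control is not needed for Shi's local estimates.
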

 
 The proof of Corollary \ref{coro-shi-est} is a direct consequence of Shi's estimates for covariant derivatives of the curvature combined with Proposition \ref{prop-quad-curv-dec}.

\bibliographystyle{amsalpha}

\bibliography{ref2}

\def\cprime{$'$} \def\cprime{$'$}
\providecommand{\bysame}{\leavevmode\hbox to3em{\hrulefill}\thinspace}
\providecommand{\MR}{\relax\ifhmode\unskip\space\fi MR }
% \MRhref is called by the amsart/book/proc definition of \MR.
\providecommand{\MRhref}[2]{%
  \href{http://www.ams.org/mathscinet-getitem?mr=#1}{#2}
}
\providecommand{\href}[2]{#2}
\begin{thebibliography}{EMT11}

\bibitem[AC25]{cy4}
V.~Apostolov and C.~Cifarelli, \emph{Hamiltonian 2-forms and new explicit
  {C}alabi-{Y}au metrics and gradient steady {K}\"ahler-{R}icci solitons on
  {$\Bbb{C}^n$}}, J. Differential Geom. \textbf{130} (2025), no.~3, 517--570.
  \MR{4918214}

\bibitem[BCD25]{babu}
I.~Babu, R.~Conlon, and A.~Deruelle, \emph{An {A}ubin continuity path for
  asymptotically conical toric shrinking gradient {K}\"ahler-{R}icci solitons:
  openness and a solution for $t=0$}, arXiv:2512.18137 (2025).

\bibitem[BEG13]{Bou-Eys-Gue}
S.~Boucksom, P.~Eyssidieux, and V.~Guedj, \emph{An introduction to the
  {K}\"ahler-{R}icci flow}, Lecture Notes in Mathematics, vol. 2086, Springer,
  Cham, 2013, pp.~viii+333. \MR{3202578}

\bibitem[BG08]{book:Boyer}
C.~Boyer and K.~Galicki, \emph{Sasakian geometry}, Oxford Mathematical
  Monographs, Oxford University Press, Oxford, 2008. \MR{2382957}

\bibitem[BM24]{heather}
O.~Biquard and H.~Macbeth, \emph{Steady {K}\"ahler-{R}icci solitons on crepant
  resolutions of finite quotients of {$\Bbb C^n$}}, J. Lond. Math. Soc. (2)
  \textbf{109} (2024), no.~1, Paper No. e12833, 27. \MR{4680223}

\bibitem[BN90]{SCV6}
W.~Barth and R.~Narasimhan (eds.), \emph{Several complex variables. {VI}},
  Encyclopaedia of Mathematical Sciences, vol.~69, Springer-Verlag, Berlin,
  1990, Complex manifolds, A translation of Sovremennye problemy matematiki.
  Fundamentalnye napravleniya, Tom 69, Akad. Nauk SSSR, Vsesoyuz. Inst. Nauchn.
  i Tekhn. Inform., Moscow. \MR{1095088 (91i:32001)}

\bibitem[Bry08]{Bry-Kah-Sol}
R.~L. Bryant, \emph{Gradient {K}\"ahler {R}icci solitons}, Ast\'erisque (2008),
  no.~321, 51--97, G{\'e}om{\'e}trie diff{\'e}rentielle, physique
  math{\'e}matique, math{\'e}matiques et soci{\'e}t{\'e}. I. \MR{2521644}

\bibitem[Cao96]{Cao-KR-sol}
H.-D. Cao, \emph{Existence of gradient {K}\"ahler-{R}icci solitons}, Elliptic
  and parabolic methods in geometry ({M}inneapolis, {MN}, 1994), A K Peters,
  Wellesley, MA, 1996, pp.~1--16. \MR{1417944}

\bibitem[CD20a]{con-der}
R.~J. Conlon and A.~Deruelle, \emph{Expanding {K}\"{a}hler-{R}icci solitons
  coming out of {K}\"{a}hler cones}, J. Differential Geom. \textbf{115} (2020),
  no.~2, 303--365. \MR{4100705}

\bibitem[CD20b]{conlon33}
\bysame, \emph{Steady gradient {K}\"ahler-{R}icci solitons on crepant
  resolutions of {C}alabi-{Y}au cones}, to appear in Mem.~Amer.~Math.~Soc.,
  arXiv:2006.03100 (2020).

\bibitem[CDS24]{cds}
R.~Conlon, A.~Deruelle, and S.~Sun, \emph{Classification results for expanding
  and shrinking gradient {K}\"ahler-{R}icci solitons}, Geom. Topol. \textbf{28}
  (2024), no.~1, 267--351. \MR{4711837}

\bibitem[CE25a]{Esp-charlie}
C.~Cifarelli and C.~Esparza, \emph{K-polystability of {A}symptotically
  {C}onical {K}\"ahler-{R}icci {S}hrinkers}, arXiv:2512.03323 (2025).

\bibitem[CE25b]{cif-esp}
\bysame, \emph{K-polystability of {A}symptotically {C}onical {K}\"ahler-{R}icci
  {S}hrinkers}, arXiv:2512.03323 (2025).

\bibitem[CH13]{Conlon}
R.~J. Conlon and H.-J. Hein, \emph{Asymptotically conical {C}alabi-{Y}au
  manifolds, {I}}, Duke Math. J. \textbf{162} (2013), no.~15, 2855--2902.
  \MR{3161306}

\bibitem[Cif20]{charlie}
C.~Cifarelli, \emph{Uniqueness of shrinking gradient {K}\"ahler-{R}icci
  solitons on non-compact toric manifolds}, to appear in J.~Lond.~Math.~Soc.,
  arXiv:2010.00166v3 (2020).

\bibitem[Cif24]{charles1}
\bysame, \emph{Explicit complete {R}icci-flat metrics and {K}\"ahler-{R}icci
  solitons on direct sum bundles}, arXiv:2410.23645 (2024).

\bibitem[CLN06]{Cho-Lu-Ni-Boo}
Bennett Chow, Peng Lu, and Lei Ni, \emph{Hamilton's {R}icci flow}, Graduate
  Studies in Mathematics, vol.~77, American Mathematical Society, Providence,
  RI, 2006. \MR{2274812 (2008a:53068)}

\bibitem[CR21]{cy2}
R.~Conlon and F.~Rochon, \emph{New examples of complete {C}alabi-{Y}au metrics
  on {$\Bbb C^n$} for {$n \geq 3$}}, Ann. Sci. \'Ec. Norm. Sup\'er. (4)
  \textbf{54} (2021), no.~2, 259--303. \MR{4258163}

\bibitem[CS18]{collinss}
T.~Collins and G.~Sz\'{e}kelyhidi, \emph{K-semistability for irregular
  {S}asakian manifolds}, J.~Differential~Geom. \textbf{109} (2018), no.~1,
  81--109. \MR{3798716}

\bibitem[CZ10]{caoo}
H.-D. Cao and D.~Zhou, \emph{On complete gradient shrinking {R}icci solitons},
  J.~Differential~Geom. \textbf{85} (2010), no.~2, 175--185. \MR{2732975}

\bibitem[DW11]{Wang}
A.~Dancer and M.~Wang, \emph{On {R}icci solitons of cohomogeneity one}, Ann.
  Global Anal. Geom. \textbf{39} (2011), no.~3, 259--292. \MR{2769300
  (2012a:53124)}

\bibitem[EMT11]{topping}
J.~Enders, R.~M\"uller, and P.~Topping, \emph{On type-{I} singularities in
  {R}icci flow}, Comm. Anal. Geom. \textbf{19} (2011), no.~5, 905--922.
  \MR{2886712}

\bibitem[Esp25a]{Esp-sc}
C.~Esparza, \emph{Shrinking gradient {K}\"ahler-{R}icci solitons are
  simply-connected}, arXiv:2503.05838 (2025).

\bibitem[Esp25b]{Esp25}
\bysame, \emph{Uniqueness of asymptotically conical shrinking gradient
  {K}\"ahler-{R}icci solitons}, arXiv:2502.13521 (2025).

\bibitem[FIK03]{FIK}
M.~Feldman, T.~Ilmanen, and D.~Knopf, \emph{Rotationally symmetric shrinking
  and expanding gradient {K}\"ahler-{R}icci solitons}, J.~Differential~Geom.
  \textbf{65} (2003), no.~2, 169--209. \MR{2058261}

\bibitem[Fut88]{fut2}
A.~Futaki, \emph{K\"ahler-{E}instein metrics and integral invariants}, Lecture
  Notes in Mathematics, vol. 1314, Springer-Verlag, Berlin, 1988. \MR{947341}

\bibitem[Fut21]{futaki3}
\bysame, \emph{Irregular {E}guchi-{H}anson type metrics and their soliton
  analogues}, Pure Appl. Math. Q. \textbf{17} (2021), no.~1, 27--53.
  \MR{4257580}

\bibitem[FW11]{futaki-wang}
A.~Futaki and M.-T. Wang, \emph{Constructing {K}\"ahler-{R}icci solitons from
  {S}asaki-{E}instein manifolds}, Asian J. Math. \textbf{15} (2011), no.~1,
  33--52. \MR{2786464}

\bibitem[Gra62]{Grau:62}
H.~Grauert, \emph{\"{U}ber {M}odifikationen und exzeptionelle analytische
  {M}engen}, Math. Ann. \textbf{146} (1962), 331--368. \MR{0137127 (25 \#583)}

\bibitem[HS16]{frank}
W.~He and S.~Sun, \emph{Frankel conjecture and {S}asaki geometry}, Adv. Math.
  \textbf{291} (2016), 912--960. \MR{3459033}

\bibitem[JMR16]{Jef-Maz-Rub}
T.~Jeffres, R.~Mazzeo, and Y.~Rubinstein, \emph{K\"ahler-{E}instein metrics
  with edge singularities}, Ann. of Math. (2) \textbf{183} (2016), no.~1,
  95--176, With appendices by Chi Li and Rubinstein. \MR{3432582}

\bibitem[Kol07]{kollar}
J.~Koll{\'a}r, \emph{Lectures on resolution of singularities}, Annals of
  Mathematics Studies, vol. 166, Princeton University Press, Princeton, NJ,
  2007. \MR{2289519}

\bibitem[Laz04]{lazarfeld}
R.~Lazarsfeld, \emph{Positivity in algebraic geometry. {I}}, Ergebnisse der
  Mathematik und ihrer Grenzgebiete. 3. Folge. A Series of Modern Surveys in
  Mathematics [Results in Mathematics and Related Areas. 3rd Series. A Series
  of Modern Surveys in Mathematics], vol.~48, Springer-Verlag, Berlin, 2004,
  Classical setting: line bundles and linear series. \MR{2095471}

\bibitem[Li10]{chili}
C.~Li, \emph{On rotationally symmetric {K}\"ahler-{R}icci solitons},
  arXiv:1004.4049 (2010).

\bibitem[Li19]{cy1}
Y.~Li, \emph{A new complete {C}alabi-{Y}au metric on {$\Bbb C^3$}}, Invent.
  Math. \textbf{217} (2019), no.~1, 1--34. \MR{3958789}

\bibitem[Li23]{cy5}
\bysame, \emph{S{YZ} geometry for {C}alabi-{Y}au 3-folds: {T}aub-{NUT} and
  {O}oguri-{V}afa type metrics}, Mem. Amer. Math. Soc. \textbf{292} (2023),
  no.~1453, v+126. \MR{4679706}

\bibitem[Ma26]{cy6}
T.~Ma, \emph{New {C}alabi-{Y}au {M}etrics of {T}aub-{N}{U}{T} {T}ype on
  $\mathbb{C}^{N+ 1}$}, arXiv:2601.06756 (2026).

\bibitem[MSY06]{MSY}
Dario Martelli, James Sparks, and Shing-Tung Yau, \emph{The geometric dual of
  {$a$}-maximisation for toric {S}asaki-{E}instein manifolds}, Comm. Math.
  Phys. \textbf{268} (2006), no.~1, 39--65. \MR{2249795}

\bibitem[MW15]{munteanu}
O.~Munteanu and J.~Wang, \emph{Topology of {K}\"ahler {R}icci solitons},
  J.~Differential~Geom. \textbf{100} (2015), no.~1, 109--128. \MR{3326575}

\bibitem[MW17]{munty}
\bysame, \emph{Conical structure for shrinking {R}icci solitons}, J. Eur. Math.
  Soc. (JEMS) \textbf{19} (2017), no.~11, 3377--3390. \MR{3713043}

\bibitem[Nab10]{naber}
A.~Naber, \emph{Noncompact shrinking four solitons with nonnegative curvature},
  J. Reine Angew. Math. \textbf{645} (2010), 125--153. \MR{2673425}

\bibitem[PSS07]{Pho-Ses-Stu}
D.~Phong, N.~Sesum, and J.~Sturm, \emph{Multiplier ideal sheaves and the
  {K}\"ahler-{R}icci flow}, Comm. Anal. Geom. \textbf{15} (2007), no.~3,
  613--632. \MR{2379807}

\bibitem[Ros63]{Rossi2}
H.~Rossi, \emph{Vector fields on analytic spaces}, Ann. of Math. (2)
  \textbf{78} (1963), 455--467. \MR{0162973 (29 \#277)}

\bibitem[Sch21]{schafer2}
J.~Sch{\"a}fer, \emph{Asymptotically cylindrical steady {K}{\"a}hler-{R}icci
  solitons}, arXiv:2103.12629 (2021).

\bibitem[Sch23]{schaffer}
J.~Sch\"afer, \emph{Existence and uniqueness of {$S^1$}-invariant
  {K}\"ahler-{R}icci solitons}, Ann. Fac. Sci. Toulouse Math. (6) \textbf{32}
  (2023), no.~1, 15--53. \MR{4574737}

\bibitem[SZ24]{JunshengSong}
S.~Sun and J.~Zhang, \emph{{K}\"ahler-{R}icci shrinkers and {F}ano fibrations},
  arXiv:2410.09661 (2024).

\bibitem[Sz{\'e}20]{cy3}
G.~Sz{\'e}kelyhidi, \emph{Uniqueness of some {C}alabi-{Y}au metrics on {${\bf
  C}^n$}}, Geom. Funct. Anal. \textbf{30} (2020), no.~4, 1152--1182.
  \MR{4153912}

\bibitem[vC11]{vanC4}
C.~van Coevering, \emph{Examples of asymptotically conical {R}icci-flat
  {K}\"ahler manifolds}, Math. Z. \textbf{267} (2011), 465--496.

\bibitem[WL23]{wangli}
B.~Wang and Y.~Li, \emph{On {K}{\"a}hler {R}icci shrinker surfaces},
  arXiv:2301.09784, to appear in Acta Math. (2023).

\bibitem[WZ04]{soliton}
X.-J. Wang and X.~Zhu, \emph{K\"ahler-{R}icci solitons on toric manifolds with
  positive first {C}hern class}, Adv. Math. \textbf{188} (2004), no.~1,
  87--103. \MR{2084775 (2005d:53074)}

\bibitem[Yau78]{Calabiconj}
S.-T. Yau, \emph{On the {R}icci curvature of a compact {K}\"ahler manifold and
  the complex {M}onge-{A}mp\`ere equation. {I}}, Comm. Pure Appl. Math.
  \textbf{31} (1978), no.~3, 339--411. \MR{480350}

\bibitem[Zha09]{zhang12}
Z.-H. Zhang, \emph{On the completeness of gradient {R}icci solitons}, Proc.
  Amer. Math. Soc. \textbf{137} (2009), no.~8, 2755--2759. \MR{2497489}

\end{thebibliography}

\end{document}